\tikzstyle{color}=[circle,draw=black!50,fill=black!20,thick, inner sep=0pt,minimum size=1mm]
\theoremstyle{plain}
\newtheorem{defn}{Definition}[section]
\newtheorem{thm}[defn]{Theorem}
\newtheorem*{theorem*}{Theorem}
\theoremstyle{definition}
\newtheorem{cor}[defn]{Corollary}
\newtheorem*{rmk}{Remark}
\newtheorem{lem}[defn]{Lemma}
\newtheorem{ex}[defn]{Example}
\newtheorem{prop}[defn]{Proposition}
\newtheorem*{conjecture}{Conjecture}
\newcommand{\set}[1]{\left\{#1\right\}}
\newcommand{\tuple}[1]{\left(#1\right)}
\newcommand{\abs}[1]{\left|#1\right|}
\newcommand{\sprod}[1]{\left<#1\right>}
\newcommand{\ol}[1]{\overline{#1}}
\newcommand{\wt}[1]{\widetilde{#1}}
\renewcommand{\phi}{\varphi}
\newcommand{\del}{\partial}
\newcommand{\Ra}{\Rightarrow}
\newcommand{\Lra}{\Leftrightarrow}
\newcommand{\Cbb}{\mathbb{C}}
\newcommand{\Fbb}{\mathbb{F}}
\newcommand{\Nbb}{\mathbb{N}}
\newcommand{\Pbb}{\mathbb{P}}
\newcommand{\Qbb}{\mathbb{Q}}
\newcommand{\Rbb}{\mathbb{R}}
\newcommand{\Sbb}{\mathbb{S}}
\newcommand{\Zbb}{\mathbb{Z}}
\newcommand{\Ccal}{\mathcal{C}}
\newcommand{\Dcal}{\mathcal{D}}
\newcommand{\Ecal}{\mathcal{E}}
\newcommand{\Fcal}{\mathcal{F}}
\newcommand{\Gcal}{\mathcal{G}}
\newcommand{\Ical}{\mathcal{I}}
\newcommand{\Kcal}{\mathcal{K}}
\newcommand{\Lcal}{\mathcal{L}}
\newcommand{\Mcal}{\mathcal{M}}
\newcommand{\Pcal}{\mathcal{P}}
\newcommand{\Ocal}{\mathcal{O}}
\newcommand{\Vcal}{\mathcal{V}}
\newcommand{\Ncal}{\mathcal{N}}
\newcommand{\gfrak}{\mathfrak{g}}
\newcommand{\hfrak}{\mathfrak{h}}
\newcommand{\kfrak}{\mathfrak{k}}
\newcommand{\lfrak}{\mathfrak{l}}
\newcommand{\pfrak}{\mathfrak{p}}
\newcommand{\mfrak}{\mathfrak{m}}
\newcommand{\qfrak}{\mathfrak{q}}
\newcommand{\tfrak}{\mathfrak{t}}
\newcommand{\wfrak}{\mathfrak{w}}
\newcommand{\zfrak}{\mathfrak{z}}
\newcommand{\rank}{\operatorname{rank}}
\newcommand{\Hom}{\operatorname{Hom}}
\newcommand{\Aut}{\operatorname{Aut}}
\newcommand{\Bl}{\operatorname{Bl}}
\newcommand{\MA}{\operatorname{MA}}
\newcommand{\vol}{\operatorname{vol}}
\renewcommand{\bar}{\operatorname{bar}}
\title{Spherical cones: classification and a volume minimization principle}
\author{Tran-Trung Nghiem}
\keywords{Spherical cones, Conical Calabi-Yau metrics, Real Monge-Ampère equations, Variational approach, Weighted volume minimization.}
\subjclass{14M27, 32Q25, 32Q26, 35J96}
\address{Tran-Trung Nghiem, IMAG, Univ Montpellier, CNRS, Montpellier, France}
\email{tran-trung.nghiem@umontpellier.fr}
\begin{document}
\begin{abstract}
Using a variational approach, we establish the equivalence between a weighted volume minimization principle and the existence of a conical Calabi-Yau structure on horospherical cones with mild singularities. This allows us to do explicit computations on the examples arising from rank-two symmetric spaces, showing the existence of many irregular horospherical cones. 
\end{abstract}
\maketitle
\section{Introduction}

\subsection{Background, motivation and main results} 

Since the resolution of the Calabi conjecture by Yau and Aubin, the problem of finding canonical metrics has been essential in Kähler geometry. In the last three decades, various authors formulated and showed that the existence of Kähler-Einstein metrics on Fano manifolds is equivalent to a purely algebro-geometric condition called K-stability \cite{Tia97, Don02, CDSa, CDSb, CDSc}. Even if the corresponding picture of non-compact varieties is still largely unexplored, partial results have been made in the case of normal affine varieties.  

A normal \( \Qbb \)-Gorenstein affine variety with klt singularities and a good action of a complex torus \( T \) is often called a \textit{Fano cone} (or a Fano cone singularity as in \cite{LWX}). Here, by a \textit{good action}, we mean that the action of the torus is effective with a unique fixed point that is contained in the closure of any \( T \)-orbit. Basically, a Fano cone is the affine cone over a log Fano variety. The log Fano base is smooth if and only if the cone is smooth outside the fixed point.

Consider a Fano cone \( X \) with a unique singularity at the fixed point \( \set{0_X}\). Let \( \xi \) be a vector in the Lie algebra of the maximal compact torus \( T_c \subset T \) such that \( -J \xi \) is a homothetic-scaling vector field on \( X \). The element \( \xi \) is called a \textit{Reeb vector}. A \textit{conical Calabi-Yau metric} on \( (X, \xi) \) is a Kähler metric \( \omega \) on \( X \backslash \set{0_X} \) which is Ricci-flat and \( 2 \)-homogeneous under \( - J \xi \), i.e.
\[ \Lcal_{-J\xi} \omega = 2 \omega \] 
When \( X \) is toric, a result of Martelli-Sparks-Yau relates the existence of a conical Calabi-Yau metric to a \textit{volume minimizing} condition \cite{MSY08}. This was generalized to any Fano cone with an isolated singularity by Collins and Székelyhidi \cite{CS19}, broadening the volume minimization principle (as \( \xi \) varies) to a suitable notion of K-stability for the pair \( (X, \xi) \). The Fano toric case with non-isolated singularities was settled recently by Berman \cite{Ber20}, recovering and extending the main result of \cite{MSY08}. Finally, a generalized notion of volume minimization was obtained by Li for \textit{any} Fano cone and shown to be in fact equivalent to K-semistability \cite{Li17}.  Interestingly, a conical Calabi-Yau structure on a Fano cone can be translated to a particular \(g\)-soliton structure on quasi-regular quotient orbifolds \cite{HL}.  In algebro-geometric words, the K-stability of the cone is equivalent to the \( g \)-weighted K-stability of any quotient orbifold. Both conditions can be verified over special test configurations only \cite[Theorem 2.9]{Li21}. 

Even then, the K-stability condition still remains hard to be checked in practice, since there is generally an infinite number of special test configurations to consider. However, on varieties with low-complexity group actions (e.g. toric varieties), the condition becomes more or less simplified. For instance, the central fiber of any test configuration of a toric Fano variety is always isomorphic to the variety itself, hence the vanishing of the Futaki invariant (which is equivalent to the barycenter of a polytope being zero) is necessary and sufficient for K-stability. The main objective of this article is to conduct a case study of conical Calabi-Yau metrics on a class of spherical cones (with non-isolated singularities in general), called \textit{horospherical cones}, and find an equivalent combinatorial K-stability notion for these cones. 

To do this, we first provide a classification of conical embeddings using ingredients from the Luna-Vust theory. Before stating the classification theorem, let us give some preliminaries. Let \( G \) be a complex connected linear reductive group.

\begin{defn}
A \emph{spherical space} is a homogeneous space \( G/H \) containing a Zariski-open orbit of a Borel subgroup \( B \). 
A \emph{\( G/H \)-spherical embedding} is a pair \( (X,x) \) where \( X \) is a normal \( G \)-variety such that \( G.x \) is an open \( G \)-orbit in \( X \) and \( H \) is the stabilizer of \( x \). 
\end{defn} 
Such a variety is called \textit{spherical}. A \textit{spherical cone} is a spherical affine variety with a fixed point under the action by automorphisms of a non-trivial torus that commutes with \( G \). If a spherical embedding \(G/H \subset  X \) is moreover a spherical cone, then \( X \) is said to be a \textit{conical embedding} of \( G/H \). Spherical cones form a large class of affine cones which notably contains the toric cones. The description of the latter is very simple: a toric cone can be completely characterized by a polyhedral convex cone of maximal dimension. 

Analogously, the spherical embeddings of \( G/H \) are classified by combinatorial objects in a vector space, called \textit{colored fans}. A colored fan is a collection of strongly convex and polyhedral colored cones satisfying certain compatibility conditions (see Defn. \ref{definition_colored_cones}, Defn. \ref{definition_colored_fans} for the precise definitions). A \textit{color} in \( G/H \) is an irreducible \(B\)-stable divisor in \( G/H \). 

\begin{thm} \label{main_theorem_classification}
Let \( Y \) be a conical embedding of a spherical space \( G/H \).  Let \( \Vcal(G/H) \) be the valuation cone of \( G/H \), i.e. the set of \( G \)-invariant valuations on the rational functions of \( G/H \). Then the following assertions are equivalent: 
\begin{enumerate}
    \item \( Y\) is a spherical cone. 
    \item \( Y \) has a unique fixed point under the action of \( G \).
    \item The valuation cone \( \Vcal(G/H) \) contains a line, and the colored cone of \( Y \) is \( ( \Ccal_Y, \Dcal) \) where \( \Ccal_Y \) is a strictly convex polyhedral cone of maximal dimension, and \( \Dcal \) is the set of all colors of \( G/H \). 
\end{enumerate}
\end{thm}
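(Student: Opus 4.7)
The plan is to derive all three conditions from Luna-Vust's orbit-cone dictionary, combined with Knop's characterization of affine simple spherical embeddings. Recall the dictionary: $G$-orbits of a spherical embedding are in inclusion-reversing bijection with the colored cones of its colored fan; closed $G$-orbits correspond to maximal colored cones, and both the stabilizer and dimension of such an orbit can be read off from the cone together with its set of colors via standard formulas.

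For (1) $\Rightarrow$ (2), if $Y$ is a spherical cone it is affine and spherical, hence \emph{simple} by Knop's theorem. The unique closed orbit contains a $B$-fixed point by sphericality, so it is isomorphic to $G/P$ with $P$ parabolic. Being closed in the affine $Y$ it is itself affine, which by Matsushima's theorem forces $P$ reductive; a parabolic subgroup that is also reductive must equal $G$, so the closed orbit reduces to a point. Uniqueness of this $G$-fixed point follows from simplicity of $Y$.

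For (2) $\Rightarrow$ (3), under hypothesis (2) the unique $G$-fixed point is the unique closed $G$-orbit, so $Y$ is simple with a single colored cone $(\Ccal_Y, \Dcal)$. The Luna-Vust formula for the stabilizer of the closed orbit forces $\Dcal = \Delta(G/H)$: any missing color would contribute a proper parabolic factor, preventing the stabilizer from being all of $G$. The accompanying dimension formula then forces $\Ccal_Y$ to have maximal dimension in $N_\Qbb$. Strict convexity of $\Ccal_Y$ corresponds to uniqueness: a lineality would yield a positive-dimensional subtorus of $\Aut^G(Y)$ sliding $0_Y$ along a family of $G$-fixed points. Finally, since $Y$ is a conical embedding there is a non-trivial torus commuting with $G$, which acts on the open orbit through the connected centre $Z = (N_G(H)/H)^0$; since the lineality of $\Vcal(G/H)$ coincides with the cocharacter lattice of $Z$ by Brion's theorem, $\Vcal(G/H)$ must contain a line.

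For (3) $\Rightarrow$ (1), I would run the above in reverse: the colored data produce a simple spherical embedding via Luna-Vust, Knop's affineness criterion applies thanks to strict convexity and $\Dcal$ being all colors, the closed orbit is a $G$-fixed point by the argument in the first step, and the line in $\Vcal(G/H)$ yields a non-trivial subtorus of $Z \subset \Aut^G(Y)$ which fixes $0_Y$, exhibiting $Y$ as a spherical cone. The main obstacle will be step (2) $\Rightarrow$ (3), especially matching strict convexity of $\Ccal_Y$ with the uniqueness of the fixed point and extracting the lineality of $\Vcal(G/H)$ from the conical hypothesis; the other implications reduce to assembling standard Luna-Vust material once this combinatorial translation is in hand.
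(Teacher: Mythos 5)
Your implication $(1)\Rightarrow(2)$ rests on the claim that the unique closed orbit of the affine simple embedding $Y$ ``contains a $B$-fixed point by sphericality'' and is therefore a flag variety $G/P$. That claim is false for affine spherical varieties in general: the trivial embedding of $SL_2/T$ is affine, spherical and simple, yet its closed orbit $SL_2/T$ has no $B$-fixed point. Borel's fixed point theorem needs completeness, and a closed orbit in an affine variety is affine, not complete, so you cannot get to $G/P$ and then invoke Matsushima. The correct argument actually uses the cone hypothesis and is much shorter: the fixed point $x$ of the non-trivial torus $T'\subset\Aut_G^0(Y)$ is unique, and since $T'$ commutes with $G$, each $g\cdot x$ is again $T'$-fixed, hence $g\cdot x=x$; uniqueness of the $G$-fixed point then follows from simplicity. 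The same repair is needed at the end of your $(3)\Rightarrow(1)$ step, which currently cites ``the argument in the first step''; there one can instead read $\dim Z=\rank(Y)-\dim\Ccal_Y+\dim G/P_{\Dcal_Y}=0$ directly off the dimension formula.

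The more serious gap is in $(2)\Rightarrow(3)$. To obtain the line in $\Vcal(G/H)$ you write ``since $Y$ is a conical embedding there is a non-trivial torus commuting with $G$'' --- but that is assertion (1), which is exactly what must be extracted from the bare existence of a $G$-fixed point; as written the argument is circular. This is the genuinely non-trivial content of the equivalence, and it does not follow from Luna--Vust combinatorics alone. The paper supplies it via Knop's theory of central valuations and proper sources: since $\Cbb(Y)^B=\Cbb$ for a spherical variety, every proper closed $G$-stable subvariety (in particular the fixed point) is a proper source, and a normal affine $G$-variety with a proper source admits a non-trivial positive grading induced by a $\Cbb^{*}$-subtorus of $\Aut_G(Y)$; combined with $\dim N_G(H)/H=\dim\operatorname{lin}\Vcal$ this produces the line. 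Your proposal contains no substitute for this input. A minor further point: strict convexity of $\Ccal_Y$ requires no argument about ``sliding $0_Y$ along $G$-fixed points''; it is part of the definition of a colored cone.
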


A horospherical variety is a spherical variety whose open \( G \)-orbit is an equivariant torus bundle over a flag variety. The class of horospherical cones, which has a rather simple description, contains strictly the class of toric cones. Moreover, horospherical cones can be shown to satisfy a Yau-Tian-Donaldson correspondence.
\begin{thm} \label{main_theorem_volume_minimization}
Let \( Y \) be a \( \Qbb \)-Gorenstein \( G \)-horospherical cone and \( T_H \) the connected component of the group of automorphisms commuting with \( G \). Then the following are equivalent: 
\begin{enumerate}
    \item  \( Y \) admits a \( \xi \)-conical Calabi-Yau metric. 
    \item  \( (Y, T_{H}, \xi) \) is K-stable. 
    \item  \( \xi \) minimizes the normalized Duistermaat-Heckman volume of \( Y \). 
\end{enumerate}
\end{thm}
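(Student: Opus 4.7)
The plan is to establish the cycle $(1) \Rightarrow (2) \Rightarrow (3) \Rightarrow (1)$, with the analytic content concentrated in the last arrow. The implication $(1) \Rightarrow (2)$ is the standard ``easy'' direction of a YTD correspondence: given a $\xi$-conical Calabi--Yau metric, the weighted Mabuchi energy along any $T_H$-equivariant special test configuration decomposes into a non-negative entropy part and the weighted Futaki invariant, which forces the latter to be non-negative; combined with strict convexity modulo the $T_H$-action this yields K-stability. The implication $(2) \Rightarrow (3)$ uses Theorem \ref{main_theorem_classification} together with Luna--Vust theory to reduce $T_H$-equivariant special test configurations of a horospherical cone to one-parameter subgroups of $T_H$ acting on the colored cone $\Ccal_Y$; the weighted Futaki invariant is then the directional derivative of the normalized Duistermaat--Heckman volume along the corresponding direction in the Reeb cone. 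Since this volume function is strictly convex on the Reeb cone, K-stability in $T_H$-directions is equivalent to $\xi$ being a critical point, hence a global minimizer.

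For the main implication $(3) \Rightarrow (1)$, I would follow a variational approach in the spirit of \cite{Ber20}. Using the horospherical structure, every $K \times T_c$-invariant plurisubharmonic potential on the smooth locus of $Y$ descends to a convex function $u$ on the interior of the dual polyhedral cone $\Ccal_Y^\vee$, with linear growth prescribed by $\xi$ at infinity and prescribed boundary behaviour on the facets of $\Ccal_Y^\vee$. The conical Calabi--Yau equation then becomes a real Monge--Amp\`ere equation of the form
\[ \MA_{\Rbb}(u) \, = \, P(\nabla u) \, e^{-2 \langle \xi, \nabla u \rangle} \, dx, \]
where $P$ is the polynomial Duistermaat--Heckman density coming from the positive roots that act non-trivially on the flag factor of the horospherical space, and the density on the right is exactly the weight defining the normalized DH volume. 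The associated weighted Mabuchi functional is defined on an appropriate space of normalized convex potentials with controlled asymptotic behaviour.

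The main obstacle is showing that this functional is coercive exactly when $\xi$ minimizes the normalized DH volume. Along a geodesic ray of convex potentials, the asymptotic slope of the functional should match a directional derivative of $\vol$ at $\xi$ against the asymptotic profile of the ray; the minimization hypothesis makes this slope non-negative for every admissible direction, and strict convexity of $\vol$ on the Reeb cone upgrades this to strict coercivity in the directions transverse to the $T_H$-action. A minimizer then exists by standard compactness for convex functions with prescribed growth, and interior regularity for real Monge--Amp\`ere equations with smooth positive density (Caffarelli) produces a smooth Ricci-flat K\"ahler metric on the open $G$-orbit. This metric extends across the non-open $G$-orbits using the local horospherical structure together with the klt/$\Qbb$-Gorenstein assumption on $Y$. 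The real novelty compared to the toric or isolated-singularity settings of \cite{MSY08, CS19} is that horospherical cones have a non-isolated singular locus and a genuinely non-trivial density $P$; both are handled at the level of measures rather than pointwise estimates, which is precisely the advantage of the variational framework.
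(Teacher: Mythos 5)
Your overall architecture is essentially the paper's: the implications \((1)\Rightarrow(2)\) and \((2)\Rightarrow(3)\) are quoted from the general Fano-cone literature (\cite{LWX}, \cite{CS18}), and the substantive content is the equivalence of \((1)\) and \((3)\) via a weighted real Monge--Ampère equation solved variationally à la \cite{Ber20, BB13}. However, your setup of that equation is wrong. A \(K\)-invariant conical potential descends to a convex function on \(\tfrak_{H,nc}\simeq\Ncal(T_H)_{\Rbb}\simeq\Rbb^r\) whose \emph{gradient image} is \(\Ccal_Y^{\vee}\); it is not a function on \(\Ccal_Y^{\vee}\), and there is no ``prescribed boundary behaviour on the facets of \(\Ccal_Y^\vee\)'' to impose. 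The equation is \(\det(d^2u_{\varpi})\,P_{DH}(du_{\varpi})=e^{\sprod{\beta,x}}\): the Duistermaat--Heckman density multiplies the Monge--Ampère measure, and the right-hand side is the exponential of the linear form \(\beta\) furnished by the \(\Qbb\)-Gorenstein condition. Your right-hand side \(P(\nabla u)e^{-2\sprod{\xi,\nabla u}}\) puts \(P\) on the wrong side of the equation, and since the homogeneity you impose forces \(-\sprod{\xi,\nabla u_{\varpi}}=u_{\varpi}\), the exponential factor you wrote is \(e^{2u_{\varpi}}\), which is not a rewriting of \(e^{\sprod{\beta,x}}\); you have imported the integrand of the volume functional into the PDE.

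The more serious gap is the coercivity step in \((3)\Rightarrow(1)\). The directional derivatives of \(\vol_{DH}\) at \(\xi\) only probe the \(r\)-dimensional family of linear directions in \(\Ncal(T_H)_{\Rbb}\), whereas the geodesic rays of potentials along which the Ding/Mabuchi functional must be controlled are indexed by arbitrary convex functions on the moment body; so ``slope of the functional equals a directional derivative of \(\vol\)'' cannot be the mechanism, and the claim that minimization of \(\vol_{DH}\) yields coercivity in \emph{all} admissible directions is unsupported as stated. The actual bridge has three steps you omit: (a) use the \(\xi\)-homogeneity to reduce the cone equation to a Kähler--Einstein-type equation \(g(d\phi+\beta)\MA_{\Rbb}(\phi)=e^{-n\phi}\) with \(\ol{d\phi(\Rbb^{r-1})}=\Delta^0_{\xi}\) on the transverse polytope \(\Delta_{\xi}=\Ccal_Y^{\vee}\cap\set{\sprod{\cdot,\xi_-}=1}\); (b) apply the Berman--Berndtsson criterion (extended to a weight \(g=P_{DH}\) that vanishes on \(\del\Delta_{\xi}\)), which says this equation is solvable, equivalently the Ding functional is coercive, if and only if \(\bar_{DH}(\Delta_{\xi})=\beta\); (c) compute \(-d_{\xi}\log\vol_{DH}=\bar_{DH}(\Delta_{\xi})\), so the barycenter condition is exactly criticality of \(\vol_{DH}\), hence minimality by strict convexity. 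It is the barycenter criterion, not a direct comparison of slopes, that converts the volume hypothesis into coercivity. Finally, the extension of the metric across the non-open \(G\)-orbits is not a consequence of the klt hypothesis alone: one needs the \(C^0\)-estimate \(u_{\varpi}\le\exp(\phi_{\Delta_{\xi}})\) together with the fact that \(\exp(\phi_{\Delta_{\xi}^0})\) extends continuously to the decolored toric model of \(Y\), and a separate regularity result on \(Y_{\mathrm{reg}}\).
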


Our proof of Theorem \ref{main_theorem_volume_minimization}, which is based on \cite{Ber20}, uses the variational approach by Berman-Berndtsson \cite{BB13} to solve a weighted real Monge-Ampère equation with exponential right-hand side and a relatively gentle condition on weight. The combinatorial condition is then equivalent to the existence of a locally bounded solution on the open dense orbit \( G/H \). This solution is actually smooth if the weight is smooth (see Thm. \ref{realMA_barycenter}), and extends to a locally bounded conical Calabi-Yau potential over all \( Y\) (thanks to a \( C^0\)-estimate). We then use a regularity result, shown in a companion paper \cite{N22}, to conclude. 

The volume minimization principle implies that there is a unique choice of a Reeb vector giving rise to a conical Calabi-Yau structure on the horospherical cone. This allows us to verify K-stability by explicit computations, thus producing many new examples of irregular Calabi-Yau cones. In the context of string theory, an infinite family of irregular toric cones (with an isolated singularity) was constructed by Gauntlett-Martelli-Sparks-Waldram \cite{GMSW}. These are the first examples of irregular Sasaki-Einstein manifolds, contradicting a conjecture of Cheeger and Tian that such manifolds do not exist. In our article, we construct a possibly infinite family of irregular \( \Qbb\)-Gorenstein horospherical cones, arising more or less unexpectedly from symmetric spaces.  

\begin{thm} \label{main_theorem_irregular_examples}
There exist irregular Calabi-Yau horospherical cones arising from irreducible rank-two symmetric spaces of type \( BC_2 \). 
\end{thm}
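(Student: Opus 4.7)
The plan is to apply the volume minimization principle of Theorem \ref{main_theorem_volume_minimization}: producing an irregular Calabi-Yau horospherical cone amounts to exhibiting a $\Qbb$-Gorenstein horospherical cone whose normalized Duistermaat-Heckman volume attains its minimum at a Reeb vector $\xi$ with at least one irrational coordinate, which prevents $-J\xi$ from integrating to a $\Cbb^*$-action and hence makes the cone irregular.

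To produce such an example, I would first compile the horospherical data attached to an irreducible rank-two symmetric pair $(\gfrak, \hfrak)$ with restricted root system $BC_2$: the character lattice $M$ and its dual $N$ (both of rank $2$), the colors $\Dcal$ together with their images in $N_\Qbb$, and above all the Weyl-type polynomial weight
\[ \pi(m) \;=\; \prod_{\alpha} \sprod{\alpha, m}^{d_\alpha} \]
encoding the multiplicities of the positive restricted roots. Applying Theorem \ref{main_theorem_classification}, I would then select a maximal-dimensional strictly convex polyhedral cone $\Ccal_Y \subset N_\Qbb$ containing all colors in its interior, and enforce the $\Qbb$-Gorenstein condition as the linear constraint $\sprod{2\rho_P, v_i} = 1$ on each primitive ray generator $v_i$ of $\Ccal_Y$, where $2\rho_P$ is the sum of the restricted roots appearing in $\pi$.

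The normalized Duistermaat-Heckman volume of the resulting cone $Y$ then takes the explicit form
\[ \widehat{\vol}(\xi) \;\propto\; \sprod{2\rho_P, \xi}^{n} \int_{\Ccal_Y^\vee \cap \set{\sprod{m, \xi} = 1}} \pi(m)\, d\sigma(m), \]
a rational function of $\xi$ which, after eliminating the rescaling freedom, reduces to a one-variable minimization. Setting the gradient to zero produces a polynomial equation in one real parameter, with integer coefficients built from the multiplicities $(d_\alpha)$ and the lattice coordinates of the rays of $\Ccal_Y$. The decisive step is arithmetic: I would have to exhibit a specific $BC_2$-symmetric pair and a specific $\Ccal_Y$ for which the rational root theorem (or reduction modulo a small prime) rules out rational roots of this polynomial in the admissible range of $\xi$, forcing the minimizer to be irrational. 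The main obstacle is that the multiplicities $d_\alpha$ from $BC_2$-symmetric spaces are essentially rigid, so some care is needed in choosing $\Ccal_Y$ so that all colors lie in its interior while simultaneously producing an arithmetically irrational minimum. Once such a cone $Y$ is exhibited, Theorem \ref{main_theorem_volume_minimization} promotes the irrational minimizer to a conical Calabi-Yau structure on $Y$, yielding the desired irregular example.
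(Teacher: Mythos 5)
Your overall strategy --- reduce existence to the volume minimization criterion of Theorem \ref{main_theorem_volume_minimization}, turn the minimization into a one-variable polynomial equation with integer coefficients built from the restricted multiplicities, and certify irrationality of the unique admissible root by an arithmetic criterion --- is exactly the paper's strategy, and the normalization/irregularity logic is sound. But there is a genuine gap at the very first step: you propose to take the combinatorial data of the symmetric space $G/H$ itself and choose a full-dimensional colored cone $\Ccal_Y$ in its coweight space containing all colors. This cannot work. A semisimple symmetric space is sober: its valuation cone is the negative restricted Weyl chamber, which is strictly convex and contains no line, so by Theorem \ref{main_theorem_classification} (and Prop. \ref{automorphism_group_dimension}) it admits \emph{no} conical embedding whatsoever --- and in any case a symmetric space is not horospherical, so even if a cone existed it would not be a horospherical cone. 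The paper's essential construction, which your proposal is missing, is to pass from $G/H$ to the horospherical space $G/H_0$ arising as the open orbit of the unique $G$-stable divisor in a compactification $X^{\xi}$, restore the rank by taking $G/H_0 \times \Cbb^{*}$, and only then form the cone $Y_0$ whose colored cone $\Ccal_0$ is generated by the restricted coroots (the images of the colors of the symmetric space reappear here via Brion--Pauer). The phrase ``arising from symmetric spaces'' in the statement refers to this degeneration, not to a conical embedding of the symmetric space.

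The second issue is that, for an existence theorem, the arithmetic step you defer is the actual content. The paper exhibits concrete $BC_2$ examples --- e.g. the multiplicities $(m_1,m_2,m_3)=(2,2,1)$ of $SL_5/ SL_2 \times SL_3$, for which the barycenter condition reduces to $-42 - 84x - 63x^2 + 60x^3 + 240x^4 + 336x^5 + 224x^6 = 0$, whose unique positive root is irrational by Eisenstein's criterion at the prime $3$ --- and several further cases checked directly. Your proposal correctly identifies that reduction modulo a small prime or the rational root theorem is the right tool, but until a specific symmetric pair and the resulting polynomial are written down and the irrationality verified, the theorem is not proved. Note also that your worry about ``choosing $\Ccal_Y$ so that all colors lie in its interior'' dissolves once the correct construction is in place: the cone is forced to be the one generated by the restricted coroots, and the only remaining freedom is the Reeb direction, which is what the one-variable equation determines.
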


In fact, another motivation of our result comes from the problem of finding asymptotic cones for Ricci-flat symmetric spaces. It turns out that we can build horospherical cones from combinatorial data associated to rank-two symmetric spaces. Such conical embeddings, endowed with their conical Calabi-Yau metrics, are expected to be asymptotic cones of suitable Ricci-flat metrics on the symmetric spaces considered in this article. This will be studied in a future paper.

\subsection{Organization} 
\begin{itemize}
\item We start in Section \ref{Spherical varieties} with generalities on the theory of equivariant embeddings of spherical spaces, as well as a brief description of the canonical divisor and \( \Qbb\)-Gorenstein singularities. We then give a combinatorial necessary and sufficient condition for a spherical embedding to be a cone, and show that a \( \Qbb\)-Gorenstein spherical cone is a Fano cone. Several examples of homogeneous spaces (not) admitting conical embeddings are gathered at the end of the section. 
\item Section \ref{CY}  aims to study conical Calabi-Yau metrics on \(\Qbb\)-Gorenstein horospherical cones. We derive an explicit expression of the canonical volume form. The conical Calabi-Yau problem is then translated into a weighted real Monge-Ampère equation with exponential right-hand side, which can be solved using the variational approach of \cite{BB13}.
\item In Section \ref{Examples}, we provide examples of regular and irregular cones arising from irreducible symmetric spaces of rank \( 2 \).
\end{itemize}
\subsection{Notations}
\begin{itemize}
\item In this text, \( G \)  is a complex simply connected linear reductive group, while \( B \), \( T \), \( U \) denote respectively the Borel subgroup, maximal torus of \( G \) and maximal unipotent subgroup of \( B \). Under an embedding of \( G \) into \( GL_n \), \( B, T, U\) can be identified with subgroups of upper-triangular, diagonal, and upper-triangular matrices with ones on the diagonal, respectively.  Likewise, the opposite Borel subgroup \( B^{-} \) (resp. opposite maximal unipotent subgroup \( U^{-} \)) can be identified with the subgroup of lower-triangular matrices (resp. lower-triangular matrices with ones on the diagonal). 

\item \( G^{0}\) denotes the connected component of a group \( G \). 

\item A \( (G,B,T) \)-root system is a root system \( (G,T)\) whose positive roots are the roots of \( (B,T) \). 

\item Letters in fraktur are used to denote the corresponding Lie algebras. The reductivity of \( G \) is equivalent to the existence of a real maximal compact subgroup \( K \) such that \( \gfrak = \kfrak \oplus i \kfrak \). 

\item Throughout the article, by a variety we mean an \textit{irreducible complex algebraic variety} (or an integral separated scheme of finite type over \( \Cbb\)). A smooth point on the variety is understood to be smooth in the algebraic sense (hence smooth in the analytic sense). 

\item Recall that a variety is complete (in the algebraic sense) if and only if it is compact for the Euclidean topology. To avoid confusion with the metric notion of completeness, a complete algebraic variety will be called a \textit{compact variety} in this paper. 

\item Given a set \( E \) in a real vector space, the \textit{cone generated by E}, denoted by \( \Rbb_{\geq 0} E \), is the set of all finite nonnegative linear combinations of elements in \( E \). 

\item Given a lattice \( \Mcal \) and its dual lattice \( \Ncal = \Hom(\Mcal,\Zbb) \), we denote by \( \sprod{.,.} \) the natural pairing between them. 
\end{itemize}

\textbf{Acknowledgements.} This paper is part of a thesis prepared under the supervision of Thibaut Delcroix and Marc Herzlich, partially supported by ANR-21-CE40-0011 JCJC project MARGE. I would like to thank Thibaut Delcroix for many illuminating discussions and remarks. 

\section{Spherical varieties} \label{Spherical varieties}

Our main references in this section are \cite{Bri97}, \cite{Kno91}. Recall that a spherical space is a homogeneous space \( G/H \) containing a Zariski-open orbit of a Borel subgroup \( B \subset G\). The only new results in this section are Thm.  \ref{spherical_cone_criterion} and Prop. \ref{spherical_cone_g_fixed_point}. 

\subsection{Classification of embeddings} 

\subsubsection{Combinatorial data of a spherical space}

Let \( G/H \) be a spherical space. By a theorem of Chevalley, we may view \( G/H \) as a smooth quasiprojective variety. Let \( \Cbb(G/H)^{(B)} \) be the set of rational functions on \( G/H \) which are eigenvectors of \( B \). An element of \( \Cbb(G/H)^{(B)} \) is called a \( B\)-semi-invariant function. The set of \( B \)-invariant rational functions \( \Cbb(G/H)^B \) is actually \( \Cbb \) by Rosenlicht's theorem. 

\begin{defn}
To a spherical space are associated two lattices. 
\begin{itemize}
    \item The weights \( \Mcal \) of \( \Cbb(G/H)^{(B)}\), called the \emph{weight lattice}.  
    \item The dual lattice \( \Ncal := \Hom(\Mcal,\Zbb) \), called the \emph{coweight lattice}.  
\end{itemize}
 We will use \( \Mcal_{\Rbb} \) and \( \Ncal_{\Rbb} \) to denote the real vector spaces \( \Mcal \otimes \Rbb\) and \( \Ncal \otimes \Rbb \). The dimension of \( \Mcal_{\Rbb} \) is called the \emph{rank of \( G/H \)}.  
\end{defn}

Remark that there exists a natural bijection between \( \Mcal  \) and \( \Cbb(G/H)^{(B)}/\Cbb^{*} \), which sends a weight to its eigenvector. Indeed, let \( f_1, f_2 \) be two functions of the same weight, then \( f_1/ f_2 \) is a \( B \)-invariant rational function, hence constant. 

The open \( B \)-orbit in \( G/H \), which is isomorphic (as an affine variety) to \( (\Cbb^{*})^k \times \Cbb^m \) \cite[Theorem 5]{Ros63}, is an open affine subset of \( G/H \), hence its complement in \( G/H \) is a collection of \( B \)-stable irreducible divisors. They are the only \( B \)-stable irreducible divisors of \( G/H \). 

\begin{defn}
The set $\Dcal$ of irreducible $B$-stable divisors of $G/H$ is called the \emph{set of colors of $G/H$}.
\end{defn}

After perhaps conjugating \( H \), we can identify the open \( B \)-orbit with \( BH/H \).

Recall that a \textit{rational valuation} of a normal variety \( X \) is a map \( \nu : \Cbb(X) \to \Qbb \) satisfying: 
\begin{itemize}
    \item \( \nu (f_1 + f_2 ) \geq \nu(f_1) + \nu(f_2) \), for all \( f_1, f_2 \in \Cbb(X) \backslash \set{0} \) such that \( f_1 + f_2 \neq 0 \). 
    \item \( \nu(f_1 f_2 ) = \nu(f_1) + \nu(f_2) \) for all \( f_1, f_2 \in \Cbb(X) \backslash \set{0} \). 
    \item \( \nu(f) = 0 \) if and only if \( f \) is constant. 
\end{itemize}
If \( X \) is a \( G\)-variety, the valuation is said to be \textit{\(G\)-invariant} if \( \nu(g.f) = \nu(f) \) for all \( g \in G \) and \( f \in \Cbb(X) \), where \( (g.f)(.) := f(g^{-1}.) \). 

\begin{prop} \cite{Kno91}
Denote by \( \Vcal \) the set of \( G\)-invariant rational valuations of \( G/H\). 
\begin{itemize}
    \item The natural mapping
\begin{align*}
\rho : \Vcal &\to \Hom(\Mcal, \Qbb) = \Ncal_{\Qbb} \\
\nu & \to (\chi \to \nu(f_{\chi}) ) 
\end{align*}
is a well-defined injection. In particular, we can identify \( \Vcal \) with a cone in \( \Ncal_{\Rbb} \), called the \textit{valuation cone} of \( G/H \).  
    \item The valuation cone \( \Vcal \) is a strictly convex and polyhedral cone in \( \Ncal_{\Rbb} \).
\end{itemize}
\end{prop}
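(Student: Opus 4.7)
The plan is to verify in sequence the well-definedness and additivity of $\rho$, its injectivity, and the convexity and polyhedrality of its image.

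Well-definedness is immediate from Rosenlicht's theorem already invoked in the text: two $B$-eigenfunctions $f_\chi, f'_\chi$ of the same weight have ratio in $\Cbb(G/H)^B = \Cbb^*$, so by the normalisation $\nu(c) = 0$ for $c \in \Cbb^*$ we obtain $\nu(f_\chi) = \nu(f'_\chi)$. Additivity of $\chi \mapsto \nu(f_\chi)$ on $\Mcal$ follows from the multiplicativity axiom $\nu(f_{\chi_1} f_{\chi_2}) = \nu(f_{\chi_1}) + \nu(f_{\chi_2})$ together with the fact that $f_{\chi_1} f_{\chi_2}$ is a $B$-eigenfunction of weight $\chi_1 + \chi_2$.

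Injectivity of $\rho$ hinges on the Luna--Vust lemma: for any finite-dimensional $G$-stable subspace $V \subset \Cbb(G/H)$ and any $G$-invariant valuation $\nu$,
\[
\min_{v \in V \setminus 0} \nu(v) = \min_{f \in V^{(B)} \setminus 0} \nu(f).
\]
The ultrametric inequality makes any superlevel set $\{v \in V : \nu(v) > c\}$ a $G$-stable linear subspace of $V$; since $V^{(B)}$ generates $V$ as a $G$-module (each simple summand being generated by its highest-weight line), any $G$-stable linear subspace of $V$ containing $V^{(B)}$ equals $V$, which forces the overall minimum on $V$ to be realised on $V^{(B)}$. Similarly, if $p \in V \setminus 0$ with $V = \Span(G \cdot p)$, then $\{v \in V : \nu(v) \geq \nu(p)\}$ is a $G$-stable linear subspace containing $p$ and hence all of $V$, so $\nu(p) = \min_{V^{(B)}} \nu$ is determined by $\rho(\nu)$. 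Now given any nonzero $f \in \Cbb(G/H)$, write $f = p/q$ with $p, q$ in the coordinate ring of a suitable affine $G$-variety with function field $\Cbb(G/H)$; by local finiteness of the $G$-action on that ring, $p$ and $q$ each generate finite-dimensional $G$-submodules, and the preceding argument determines $\nu(p)$ and $\nu(q)$, hence $\nu(f) = \nu(p) - \nu(q)$, from $\rho(\nu)$.

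Convexity and polyhedrality of $\rho(\Vcal)$ in $\Ncal_\Rbb$ are the deepest parts and follow from Knop's structure theorem: $\Vcal$ coincides with the antidominant chamber of a finite reflection group on $\Ncal_\Rbb$, the \emph{little Weyl group} of $G/H$, whose walls are cut out by the spherical roots of $G/H$. This reflection group arises from Knop's analysis of the moment map of the cotangent bundle $T^*(G/H)$, and as a fundamental chamber of a finite reflection group, $\Vcal$ is automatically a convex polyhedral cone. The main technical obstacle lies precisely in this polyhedrality assertion: while injectivity reduces fairly directly to the Luna--Vust minimum lemma, identifying $\Vcal$ with a Weyl chamber requires the full machinery of \cite{Kno91}, which we would cite rather than reprove.
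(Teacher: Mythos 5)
The paper offers no proof of this proposition --- it is quoted verbatim from \cite{Kno91} --- so your sketch can only be measured against the standard argument there, which it reproduces essentially faithfully. Well-definedness via Rosenlicht, additivity via multiplicativity of valuations, and injectivity via the Luna--Vust minimum lemma are all correct: the superlevel sets $\set{v \in V : \nu(v) > c} \cup \set{0}$ (you should adjoin $0$ for these to be subspaces) are $G$-stable linear subspaces by the ultrametric inequality, $V^{(B)}$ generates $V$ as a $G$-module, and every $f \in \Cbb(G/H)$ is a quotient $p/q$ with $p,q$ lying in a common finite-dimensional $G$-module (two regular functions in the quasi-affine case, or two sections of the same $\Ocal(m)$ under a projective $G$-equivariant embedding in general --- worth saying, since the $G$-action on $\Cbb(G/H)$ itself is not locally finite). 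Note also that the paper's stated axiom $\nu(f_1+f_2) \geq \nu(f_1)+\nu(f_2)$ is a typo for the ultrametric inequality $\nu(f_1+f_2) \geq \min(\nu(f_1),\nu(f_2))$ that your argument correctly uses. Deferring polyhedrality to Knop's little-Weyl-group theorem is appropriate, since that is precisely what the paper's citation covers.

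The one substantive point concerns the second bullet: your argument (and any correct argument) establishes that $\Vcal$ is a convex polyhedral --- in fact cosimplicial --- cone, namely the antidominant chamber of the little Weyl group, but it does \emph{not} establish strict convexity, and it cannot: the assertion is false as stated. For a horospherical space $\Vcal = \Ncal_{\Rbb}$ (Prop.~\ref{horospherical_space_characterization_proposition}), which contains lines, and the paper itself later records that strict convexity of $\Vcal$ is equivalent to $G/H$ being sober. So ``strictly convex'' in the statement should read ``convex'' (or ``cosimplicial''); your proof is a correct proof of the corrected statement, but you should flag the discrepancy rather than silently prove something weaker than what is asserted.
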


\begin{defn}
Each element in \( \Dcal \) induces a valuation of \( \Cbb(G/H) \), hence a natural map \( \Dcal \to \Ncal_{\Qbb} \) as above (but non-injective in general), still denoted by \( \rho \). We call \( \rho(\Dcal) \) the \emph{images of the colors of} \( G/H \). 
\end{defn}


\begin{defn}
Let \( G/H \) be a spherical space. We call  \((\Mcal, \Vcal, \Dcal)\) the \emph{combinatorial data} associated to \( G/H \).  
\end{defn}

\begin{ex} \label{rank_one_horospherical}
Let \( G = SL_2 \) act on \( \Cbb^2 \) on the left. The stabilizer \( H\) of \( (1,0) \) is then \(U \). Let \( B \subset SL_2 \) be the Borel subgroup and \( \alpha \) the unique positive root corresponding to \( B \) with coroot \( \alpha^{\vee} \). Let \( B^{-} \) be the opposite Borel subgroup of \( B \).  The open \( G \)-orbit is isomorphic to \( SL_2 / U \simeq \Cbb^2 \backslash \set{0} \), with open dense \( B^{-} \)-orbit isomorphic to \( B^{-} / U \). The fundamental weight of \( B^{-} \) is 
\[ \omega(b_{ij}) = b_{11} \] 
The \( H \)-right-invariant eigenvector of \( B^{-} \) with weight \( \omega \) is then \( f(A \in SL_2) = a_{22} \). It follows that \( \Mcal = \Zbb \omega \). The coweight lattice \( \Ncal \) is \( \Zbb (\alpha^{\vee}|_{\Mcal}) \), and \( \Ncal_{\Rbb} \) coincides with the valuation cone \( \Vcal \). The unique color of \( G/H \) is \( D = \set{f = 0} \). Since \( \rho(D)(f) = \nu_D(f) = 1 \), we have that \( \rho(D) = \alpha^{\vee}|_{\Mcal} \). 
\end{ex}
 
\begin{ex} \label{rank_two_horospherical}
Now let \( G = SL_3 \) and \( H = U^{-} \). The space \( G/H \) is a fibration over the flag \( G/B^{-} \) with fiber the torus \( B^{-} / U^{-} \simeq (\Cbb^{*})^2 \). The open \( B \)-orbit of \( SL_3 / U^{-} \) is isomorphic to \( B / U^{-} \).  Let \( \set{\alpha_1,\alpha_2} \) be the positive simple roots corresponding to \( (G,B,T)\). The fundamental weights \( \omega_1, \omega_2 \) of \( B \) are  
\[ \omega_1(b_{ij}) = b_{11}, \; \omega_2(b_{ij}) = b_{11} b_{22} \] 
Let \( f_1, f_2 \in \Cbb(G/H)^{(B)} \) be two eigenvectors of \( B \) with weights \( \omega_{1,2} \), defined by  \( H \)-invariant functions on \( G \):
\[  f_1(A \in SL_3) = a_{22} a_{33} - a_{23} a_{32}, \; f_2(A) = a_{33}  \]
The \( B \)-stable sets \( D_i = \set{f_i = 0} \) are the colors of \( SL_3 / U^{-} \). 
The lattice \( \Mcal \) is then identified with \( \Zbb \omega_1 \oplus \Zbb \omega_2 \) and its dual \( \Ncal \) is the lattice \( \Zbb \alpha_1^{\vee} \oplus \Zbb \alpha_2^{\vee} \) generated by coroots. In particular, \( \rank(SL_3 / U^{-}) = 2 \). Moreover, \( \rho(D_i)(f_j) = \nu_{D_i}(f_j) = \delta_{ij} \). It follows that \( \rho(D_i) = \alpha_i^{\vee}|_{\Mcal} \). 
\end{ex}

\subsubsection{The Luna-Vust theory of spherical embeddings}

Recall that a spherical embedding is a \( G \)-equivariant embedding of a spherical space \( G/H \). It is clear that a spherical embedding is a spherical variety. Conversely, let \( X \) be a spherical variety with an open orbit \( G.x \). Then we can always choose a stabilizer \( H \) of \( x \) such that \( BH/H \) is open in \( G/H \), hence \( (X,x) \) is a \( G/H \)-spherical embedding. When there is no confusion, we remove \( x \) and simply say that \( G/H \subset X \) is a spherical embedding.

\begin{defn} \label{g_stable_divisors_colors_definition}
Let \(G/H \subset (X,x) \) be a spherical embedding.
\begin{itemize}
\item The divisors of $\Dcal$ whose closure in $X$ contains a closed orbit are called the \emph{colors of $X$}. The set of colors of \( X \) is denoted by \( \Dcal_X \).  
\item We set \( \Vcal_X \) to be the \emph{\( G \)-stable divisors of \( X \)}, which are the irreducible components of \( X \backslash Gx \). If \( D \) is a \( G \)-invariant divisor, then by normality of \( X \), we can associate to it a unique primitive \( G \)-invariant valuation \( \nu_D \), hence under the map \( \rho \), \emph{\( \Vcal_X \) injects to a finite subset of \( \Vcal \)}. 
\end{itemize}
\end{defn}

\begin{rmk}
We distinguish between \textit{the colors of $G/H$} and \textit{the colors of $G/H$ as a trivial embedding of itself}. The latter is always empty.  
\end{rmk} 

\begin{defn}
A spherical variety is said to be \emph{simple} if it contains a unique closed \( G \)-orbit. 
\end{defn}

\begin{prop} \cite{BLV} \label{bstable_affine_chart}
\begin{itemize}
\item Let $X$ be a simple spherical variety  with the unique closed $G$-orbit $Y$. There exists a unique \( B \)-stable affine subset $X_{Y,B} \subset X$, which meets \( Y \) along a Zariski-open \( B\)-orbit in \( Y \). Moreover, \( X = G X_{Y,B} \). 

\item $X \backslash X_{Y,B} = \Dcal \backslash \Dcal_X$ is a union of \( B\)-stable Cartier divisors, generated by global sections, and do not intersect \( Y \). 

\item Every \( G \)-spherical variety admits a finite cover by simple spherical varieties. 
\end{itemize}
\end{prop}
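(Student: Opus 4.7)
My plan is to construct the open set $X_{Y,B}$ explicitly by removing the $B$-stable prime divisors of $X$ that are disjoint from $Y$, and then to verify its properties via Sumihiro's theorem and $B$-linearization. The key preliminary observation is that, since $X$ is simple, every $G$-orbit closure contains the unique closed orbit $Y$, so no $G$-stable prime divisor can avoid $Y$; hence the $B$-stable prime divisors of $X$ disjoint from $Y$ are exactly the closures $\overline{D}$ with $D \in \Dcal \setminus \Dcal_X$. Setting
\[
X_{Y,B} \;:=\; X \,\setminus\, \bigcup_{D \in \Dcal \setminus \Dcal_X} \overline{D},
\]
one obtains a canonical $B$-stable open subset whose intersection with $Y$ is the unique open $B$-orbit of the spherical $G$-variety $Y$ (a $G$-orbit in a spherical variety being itself spherical). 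Moreover $G \cdot X_{Y,B}$ is $G$-stable, open, and contains $Y$, so it meets every $G$-orbit, giving $X = G \cdot X_{Y,B}$.

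The main obstacle is to show that $X_{Y,B}$ is actually affine and that each $\overline{D}$ is Cartier and globally generated. Here I would apply Sumihiro's theorem to cover $X$ by $B$-stable affine open sets; on each such chart, $B$-stable prime divisors are cut out by $B$-semi-invariant regular functions, hence each $\overline{D}$ is locally principal and thus Cartier. A sufficiently divisible positive linear combination $\sum_D n_D \overline{D}$ then defines a $B$-linearized line bundle carrying a $B$-semi-invariant global section $s$ whose non-vanishing locus is precisely $X_{Y,B}$, which identifies the latter with a principal affine open. Global generation of each $\Ocal_X(\overline{D})$ follows by exhibiting $G$-translates of $s$ whose joint non-vanishing locus covers $X \setminus \overline{D}$. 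The uniqueness assertion is then read off the construction: $X_{Y,B}$ is the maximal $B$-stable open obtained by removing all $B$-stable prime divisors disjoint from $Y$, and any other $B$-stable affine open meeting $Y$ along the open $B$-orbit must be contained in it (else it would meet some $\overline{D}$ with $D \notin \Dcal_X$, violating principality of the defining section of $X_{Y,B}$).

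For the third bullet, given a closed $G$-orbit $Y$ of an arbitrary spherical $G$-variety $X$, I would set
\[
X^Y \;:=\; X \,\setminus\, \bigcup_{\substack{Y' \text{ closed } G\text{-orbit} \\ Y' \neq Y}} \overline{Y'}.
\]
This is $G$-stable, open, contains $Y$, and has $Y$ as its unique closed $G$-orbit, since any closed $G$-orbit of $X^Y$ would have its closure in $X$ contain some closed $G$-orbit of $X$, which must be $Y$. Because a spherical variety has only finitely many $G$-orbits (in particular finitely many closed ones), the collection $\{X^Y\}_Y$ provides the required finite open cover of $X$ by simple spherical subvarieties.
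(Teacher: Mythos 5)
The paper gives no proof of this proposition --- it is quoted from \cite{BLV} --- so your sketch has to stand on its own. Your identification of $X_{Y,B}$ as $X \setminus \bigcup_{D \in \Dcal \setminus \Dcal_X} \ol{D}$, the deduction $X = G\cdot X_{Y,B}$ from openness of $G\cdot X_{Y,B}$ and uniqueness of the closed orbit, and the argument for the third bullet are all correct and standard. But the two steps carrying the real content have genuine gaps. (i) Cartier-ness and affineness: a $B$-stable prime divisor of a normal affine $B$-variety need not be principal (the torus-stable plane $\set{x=z=0}$ in the quadric cone $\set{xy=zw}$ has ideal $(x,z)$), so ``cut out by a $B$-semi-invariant regular function on a Sumihiro chart'' does not give local principality of $\ol{D}$; and even granting the line bundle $\Ocal_X\bigl(\sum_D n_D \ol{D}\bigr)$ with its canonical section $s$, the non-vanishing locus of a section of an arbitrary line bundle need not be affine (the canonical section of $\Ocal(E)$ on the blow-up of $\Pbb^2$ at a point has non-vanishing locus $\Pbb^2$ minus a point). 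What one actually extracts from Sumihiro is quasi-projectivity: embed $X$ equivariantly in $\Pbb(V)$, choose a $B$-eigenvector section $f$ of the ample $\Ocal(1)$ not vanishing at a point $y$ of the open $B$-orbit of $Y$; then $X_f$ is affine because the bundle is ample, and $\operatorname{div}(f)$ is what furnishes the globally generated Cartier divisors and identifies $X_f$ with $X_{Y,B}$. (ii) The equality $X_{Y,B}\cap Y = B\cdot y$ is asserted, not proved: the inclusion $\supseteq$ is easy (each $\ol{D}$ is closed, $B$-stable and does not contain $Y$, hence misses the open $B$-orbit of $Y$), but the reverse inclusion --- that $Y \setminus B\cdot y$ is covered by the $\ol{D}$ with $D\notin\Dcal_X$ --- is exactly where the work lies, and again falls out of the choice of $f$ (one arranges $f$ to vanish on $Y\setminus B\cdot y$). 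Your uniqueness argument inherits these gaps, since it appeals to ``principality of the defining section.''

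A further slip, which you share with the statement as reproduced in the paper: your ``key preliminary observation'' says ``disjoint from $Y$'' where it must say ``does not contain $Y$.'' The divisors $\ol{D}$, $D\in\Dcal\setminus\Dcal_X$, do in general meet $Y$ --- their traces on $Y$ are precisely $Y\setminus B\cdot y$ --- and if they were genuinely disjoint from $Y$ then $X_{Y,B}$ would contain all of $Y$, contradicting your own (correct) claim that it meets $Y$ only in the open $B$-orbit.
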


\begin{ex}
Consider Ex. \ref{rank_one_horospherical}. The natural embedding $X = \Cbb^2 \supset G/H$ is an affine simple embedding with the unique closed \( G \)-orbit \(Y = \set{0} \).  The unique open affine \( B^{-} \)-stable subset \( X_{Y,B^{-}} \) containing \(\set{0}\) is \( X \) itself.  
\end{ex}

If we identify the elements in \( \Dcal \) with their closure in \( X \), then the elements of $\Dcal \backslash \Dcal_X$ correspond to $B$-stable divisors of $X$ not containing any \(G\)-orbit. 


\begin{defn} \label{definition_colored_cones} Let \( G/H \) be a spherical space. 
A \emph{colored cone} is a couple $(\Ccal, \Fcal)$ where $\Ccal \subset \Ncal_{\Rbb} $ and  $\Fcal \subset \Dcal$  that verifies: 
\begin{itemize}
    \item $0 \notin \rho(\Fcal)$ and $\Ccal$ is a strictly convex polyhedral cone generated by $\rho(\Fcal)$ (which might be empty) and a finite number of elements of $\Vcal$. 
    \item The relative interior of $\Ccal$ meets $\Vcal$. 
\end{itemize}
A \textit{face} of the colored cone \( (\Ccal, \Fcal) \) is a couple \( ( \Ccal', \Fcal') \) such that \( \Ccal' \) is a face of the cone \( \Ccal \), and that \( \Ccal' \)  meets \( \Vcal \) , and  \( \Fcal' = \Fcal \cap \rho^{-1} ( \Ccal' ) \). 
\end{defn}

The following theorem classifies all the simple embeddings in terms of colored cones. 

\begin{thm}\cite{Kno91}
Let \( G/H \) be a spherical space and \( G/H \subset X \) a spherical embedding with \( \Vcal_X, \Dcal_X \) as in Defn. \ref{g_stable_divisors_colors_definition}.  Let \( \Ccal_X \) be the cone generated by \( \Vcal_X, \rho(\Dcal_X) \). 

The map \( X \to (\Ccal_X, \Dcal_X) \) is a one-to-one correspondence between isomorphism classes of \( G \)-equivariant simple embeddings of \( G/H \) and colored cones. 
\end{thm}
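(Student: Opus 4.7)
The plan is to construct an explicit inverse to $X \mapsto (\Ccal_X, \Dcal_X)$. First I would verify that $(\Ccal_X, \Dcal_X)$ is indeed a colored cone in the sense of Defn. \ref{definition_colored_cones}. Polyhedrality is immediate from the finiteness of $\Vcal_X \sqcup \Dcal_X$; strict convexity reflects the uniqueness of the closed $G$-orbit $Y$; the relative interior meets $\Vcal$ because $Y$ produces a distinguished $G$-invariant valuation (coming from a generic one-parameter subgroup of the center of a Levi stabilizing $Y$) which lies in this interior; and $0 \notin \rho(\Dcal_X)$ since colors are nontrivial prime divisors.

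For injectivity, Prop. \ref{bstable_affine_chart} reduces the problem to reconstructing the $B$-stable affine chart $X_{Y,B}$ from $(\Ccal_X, \Dcal_X)$, since $X = G \cdot X_{Y,B}$. The $B$-stable prime divisors of $X_{Y,B}$ are precisely $\Vcal_X \sqcup \Dcal_X$, and a $B$-eigenfunction $f_\chi \in \Cbb(G/H)^{(B)}$ is regular on $X_{Y,B}$ if and only if $\sprod{\chi, v} \geq 0$ for every $v \in \Vcal_X \cup \rho(\Dcal_X)$, equivalently $\chi \in \Ccal_X^\vee \cap \Mcal$. Since $X_{Y,B}$ is spherical with respect to the parabolic $P \supset B$ stabilizing it, its coordinate ring is determined by its $B$-semi-invariants,
\[ \Cbb[X_{Y,B}]^{(B)} \;=\; \bigoplus_{\chi \in \Ccal_X^\vee \cap \Mcal} \Cbb \cdot f_\chi, \]
together with the $P$-action. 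Hence $(\Ccal_X, \Dcal_X)$ uniquely recovers $\Cbb[X_{Y,B}]$, and consequently $X$, up to $G$-equivariant isomorphism over $G/H$.

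For surjectivity, starting from a colored cone $(\Ccal, \Fcal)$, I would define the $B$-stable $\Cbb$-subalgebra $A := \bigoplus_{\chi \in \Ccal^\vee \cap \Mcal} \Cbb \cdot f_\chi \subset \Cbb(G/H)$, verify its finite generation via Gordan's lemma on $\Ccal^\vee \cap \Mcal$ together with the multiplicative compatibility $f_{\chi_1} f_{\chi_2} \in \Cbb \cdot f_{\chi_1+\chi_2}$, and then extend $A$ to a $P$-stable subalgebra $\widetilde A$ (where $P$ is the parabolic determined by $\Fcal$ as the stabilizer of the colors not in $\Fcal$). Setting $X_{Y,B}^0 := \Spec \widetilde A$ yields an affine normal $P$-variety containing $BH/H$ as an open $B$-orbit. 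The simple $G$-embedding is then obtained by gluing the $G$-translates $\{g \cdot X_{Y,B}^0\}_{g \in G}$ along their common overlaps in $G/H$, equivalently by invoking Sumihiro's theorem to realize $G \cdot X_{Y,B}^0$ inside a normal quasi-projective $G$-variety.

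The main obstacle is the surjectivity step, specifically ensuring that the gluing yields a \emph{separated} $G$-scheme that is simple and whose colored cone recovers $(\Ccal, \Fcal)$. Separatedness follows from the Luna-Vust compatibility condition on overlapping charts: two $G$-translates intersect in an open whose combinatorial data corresponds to a common face of the colored cone, and the universal property of colored cones ensures the gluing data matches. Simplicity follows from the strict convexity of $\Ccal$, forcing a unique closed orbit, while the identification of the colored cone reduces to tracking $B$-stable divisors through the construction and using the assumption that the relative interior of $\Ccal$ meets $\Vcal$.
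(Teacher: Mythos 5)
The paper offers no proof of this theorem --- it is quoted from Knop's account of the Luna--Vust theory --- so I am judging your outline against that standard argument. Your overall strategy (reduce to the $B$-chart $X_{Y,B}$ via Prop.~\ref{bstable_affine_chart} and construct an explicit inverse) is the right one, and your identification of the $B$-semi-invariants of the chart, $\Cbb[X_{Y,B}]^{(B)} = \bigoplus_{\chi \in \Ccal_X^\vee \cap \Mcal}\Cbb\, f_\chi$, is correct (eigenfunctions are units on the open $B$-orbit, so their regularity is controlled by the $B$-stable divisors $\Vcal_X \sqcup \Dcal_X$ alone). The gap is the next sentence: the claim that these semi-invariants, ``together with the $P$-action'', determine $\Cbb[X_{Y,B}]$. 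That reconstruction uses only the cone $\Ccal_X$ and not the color set $\Dcal_X$, and it fails on the paper's own Example~\ref{rank_one_horospherical_simple_embeddings}: the simple embeddings $\Cbb^2$ and $\Bl_0(\Cbb^2)$ of $SL_2/U$ have the same cone $\Rbb_{\geq 0}\rho(D)$ but color sets $\{D\}$ and $\varnothing$; their $B$-charts are $\Cbb^2$ and $\Bl_0(\Cbb^2)\setminus\ol{D}$, two non-isomorphic affine subvarieties whose coordinate rings inside $\Cbb(G/H)$ contain exactly the same $B$-semi-invariants (the non-negative powers of the equation of $D$), the second ring being strictly larger in non-eigenvector directions. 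The structural reason is twofold: $B$ and $P$ are not reductive, so a $P$-stable subalgebra of $\Cbb(G/H)$ is not determined by (nor generated over $P$ by) its $B$-eigenvectors; and $\rho$ is not injective on $\Dcal$, so valuative conditions on eigenvectors cannot even see which colors lie in $\Dcal_X$. The correct reconstruction is $\Cbb[X_{Y,B}] = \{f \in \Cbb[\,G/H \setminus \bigcup_{D\in\Dcal\setminus\Dcal_X} D\,] : \nu(f)\geq 0 \ \forall\, \nu \in \Vcal\cap\Ccal_X\}$: the colors enter through the open subset of $G/H$ on which $f$ must be regular, and only $G$-invariant valuations --- for which $\rho$ is injective and $\nu(f)$ is computable as $\min_\chi \nu(f_\chi)$ over the eigenvectors of the $G$-module generated by $f$ --- impose the remaining conditions.

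The surjectivity step inherits the same defect in sharper form: ``extend $A$ to a $P$-stable subalgebra $\widetilde A$'' is not a construction, since a $B$-stable algebra sits inside many $P$-stable ones (in the example above, the algebra of eigenvectors lies in both $\Cbb[\Cbb^2]$ and $\Cbb[\Bl_0(\Cbb^2)\setminus\ol{D}]$, which realize the two different choices of $\Fcal$). One must define the chart directly by the displayed formula, then prove finite generation and normality, show that each $\nu\in\Vcal$ generating an extremal ray of $\Ccal$ not spanned by colors acquires a divisorial center, and verify that the resulting colored cone is $(\Ccal,\Fcal)$; together with the valuation-theoretic lemma just quoted, this is where the substance of the theorem lies. (A smaller issue: the $G$-invariant valuation in $\operatorname{RelInt}(\Ccal_X)$ is not produced by ``a one-parameter subgroup of a Levi stabilizing $Y$'' --- $Y$ is a $G$-orbit --- but by blowing up along $Y$ and taking an exceptional component, as in the paper's discussion before Prop.~\ref{prop_proper_source_torus_action}.)
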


\begin{ex} \label{rank_one_horospherical_simple_embeddings}
Consider the action of \( SL_2 \) in Ex. \ref{rank_one_horospherical}. Apart from the trivial embedding, all possible colored cones are
\begin{itemize}
    \item $\set{\Rbb_{\geq 0} \rho(D), \varnothing}$, which is $X = \Bl_0(\Cbb^2)$.
    \item $\set{\Rbb_{\leq 0} \rho(D), \varnothing}$. The corresponding embedding is $X \simeq \Pbb^2 \backslash [1:0:0]$.
    \item $\set{\Rbb_{\geq 0} \rho(D), \rho(D)}$, with $X  = \Cbb^2$ as the embedding. 
\end{itemize}
Remark that \( \Cbb^2 \) has no \( SL_2 \)-stable divisor and \( \Bl_0(\Cbb^2) \) has no color. 
\end{ex}

\begin{defn} \label{definition_colored_fans}
A \emph{colored fan} is a finite collection $\Fbb$ of colored cones such that: 
\begin{itemize}
\item Every face of a colored cone in \( \Fbb \) belongs to $\Fbb$. 
\item For all $v \in \Vcal$, there exists at most one $(\Ccal, \Fcal) \in \Fbb$ such that $v \in \text{RelInt}(\Ccal)$.
\end{itemize}
The \emph{support} of  \( \Fbb \) is defined as:
\[ \text{Supp}(\Fbb) = \Vcal \cap \cup_{(\Ccal, \Fcal) \in \Fbb} \text{Supp} (\Ccal) \] 
\end{defn}

\begin{thm} \cite{Kno91}  \label{spherical_embeddings_classification}
Given a spherical space $G/H$, there exists a one-to-one correspondence between the isomorphism classes of \( G \)-equivariant embeddings $X \supset G/H$ and the set of colored fans. 

Moreover, $X$ is compact if and only if the support of the fan $\Fbb(X)$ coincides with the valuation cone \( \Vcal \). 
\end{thm}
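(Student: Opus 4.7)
The plan is to follow the classical Luna--Vust strategy, reducing the global classification to the already-stated classification of simple embeddings by colored cones and handling the general case by a combinatorial gluing procedure. The compactness statement is then extracted via the valuative criterion of properness applied to $G$-invariant valuations.

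For the direction assigning a colored fan to an embedding $X \supset G/H$, I would invoke Prop.~\ref{bstable_affine_chart}: the variety $X$ is covered by finitely many simple open $G$-stable subvarieties $X_Y$, one for each closed $G$-orbit $Y \subset X$. To each $X_Y$ the simple classification associates a colored cone $(\Ccal_{X_Y}, \Dcal_{X_Y})$. I would then define $\Fbb(X)$ to consist of these colored cones together with all of their faces, after checking that each face itself corresponds to an intermediate $G$-orbit and hence again to a simple $G$-subvariety of $X$. The compatibility axiom that no element of $\Vcal$ lies in the relative interior of two distinct colored cones of $\Fbb(X)$ is the combinatorial translation of separatedness of $X$: if a $G$-invariant valuation had centers in two closed orbits $Y_1 \neq Y_2$, one could produce two distinct extensions of the identity $\operatorname{Spec}\Cbb(G/H) \to X$ to a DVR, contradicting separatedness.

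For the converse, given a colored fan $\Fbb$, I would assign to each maximal colored cone $(\Ccal, \Fcal)$ the corresponding simple embedding $X_{(\Ccal,\Fcal)}$, and glue these along the canonical open inclusions $X_{(\Ccal',\Fcal')} \hookrightarrow X_{(\Ccal,\Fcal)}$ whenever $(\Ccal',\Fcal')$ is a face of $(\Ccal,\Fcal)$. The face axiom makes these inclusions $B$-equivariant and compatible on triple overlaps, so the gluing produces a $G$-variety $X_{\Fbb}$ containing $G/H$ as the open orbit. Checking that $X_{\Fbb}$ is separated is where the ``at most one colored cone in whose relative interior $v$ lies'' axiom intervenes: by the valuative criterion it suffices to exclude that a $G$-invariant valuation extends in two incompatible ways to the glued variety, which is exactly what that axiom forbids. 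Finally, the two constructions are mutually inverse by comparison on simple pieces.

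For the compactness statement I would apply the valuative criterion of properness to $X \to \operatorname{Spec}\Cbb$. The open orbit $G/H$ has function field $\Cbb(X)$, so the criterion reduces to verifying that every DVR of $\Cbb(G/H)$ admits a center in $X$. A standard $B$-averaging argument, using that $\Cbb(G/H)^{(B)}/\Cbb^*$ injects into $\Mcal$ and that $\Cbb(G/H)^B = \Cbb$, shows it suffices to consider $G$-invariant valuations, i.e.\ elements $v \in \Vcal$. Such a $v$ has a center in $X$ precisely when $\rho(v)$ lies in some colored cone of $\Fbb(X)$, whence $X$ is compact iff $\operatorname{Supp}(\Fbb(X)) = \Vcal$. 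The main obstacle throughout is the separatedness argument in the gluing step: this is where the combinatorial compatibility condition on colored fans must be leveraged precisely, and it constitutes the technical heart of the Luna--Vust theorem.
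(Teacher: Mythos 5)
The paper does not prove this statement: it is quoted from Knop \cite{Kno91} and used as a black box, so there is no internal proof to compare yours against. Your outline is the standard Luna--Vust argument and is correct in structure: cover $X$ by the simple pieces $X_Y$ attached to the closed orbits (Prop.~\ref{bstable_affine_chart}), apply the simple classification to each and record the faces via the orbit--face correspondence; conversely, glue the simple embeddings of the maximal colored cones along the open subsets dictated by the face relation, all pieces being birational models containing $G/H$, and extract compactness from the valuative criterion restricted to $\Vcal$.

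Two steps are stated imprecisely, and they are exactly where the substance of \cite{Kno91} lies. First, the reduction of the valuative criteria to $G$-invariant valuations is not a ``$B$-averaging using $\Cbb(G/H)^{(B)}/\Cbb^{*}\hookrightarrow\Mcal$ and $\Cbb(G/H)^{B}=\Cbb$'' --- those facts are what make $\rho$ well defined and injective on $\Vcal$, which is a different issue. The actual reduction uses the Luna--Vust construction $\bar v(f):=v(g\cdot f)$ for $g$ in a dense open subset of $G$, producing a $G$-invariant valuation from an arbitrary one, together with the local structure of the $B$-charts to show that $v$ acquires a center as soon as $\bar v$ does. Second, separatedness of the glued variety cannot be tested on $G$-invariant valuations alone: a priori all valuations of $\Cbb(G/H)$ must be considered, and the nontrivial content of Knop's separation criterion is precisely that the condition can be localized to the $B$-charts and then expressed purely in terms of $\Vcal$ (note that the fan axiom in Defn.~\ref{definition_colored_fans} only forbids overlapping relative interiors \emph{along} $\Vcal$; the cones are allowed to overlap elsewhere, which is weaker than the toric fan axiom and makes the separatedness argument genuinely delicate). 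You flag both points as the technical heart, which is honest, but as written they are asserted rather than proved.
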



Let \( X \) be a spherical variety. Any  $G$-orbit 
$Y$  is contained in a unique simple spherical variety  $X_{Y}$, which is a union of the orbits whose closure contains $Y$. By the classification of simple embeddings, \( X_Y \) corresponds to a colored cone \( (\Ccal_{X_Y}, \Dcal_{X_Y}) \). 

\begin{prop}\cite{Kno91}
Let \( Y \) be any \( G \)-orbit in a spherical variety \( X \). There exists a bijection between the $G$-orbits in $X$ whose closure contains the $G$-orbit $Y$  and the faces of the colored cone \( (\Ccal_{X_{Y}}, \Dcal_{X_{Y}}) \), given by $Z \to (\Ccal_{X_{Z}}, \Dcal_{X_{Z}})$. 
\end{prop}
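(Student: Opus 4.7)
The plan is to reduce to the simple case and then transfer the analysis to the $B$-stable affine chart, where orbit closures can be read off the $B$-weight semigroup of its coordinate ring.

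First, I would reduce to $X = X_Y$. By definition of $X_Y$, a $G$-orbit $Z$ satisfies $Y \subseteq \overline{Z}$ if and only if $Z \subseteq X_Y$; moreover, for such $Z$, the simple subvariety $X_Z$ and its colored cone are intrinsic, i.e.\ the same whether computed inside $X$ or inside $X_Y$. The task thus reduces to showing that $Z \mapsto (\Ccal_{X_Z}, \Dcal_{X_Z})$ is a bijection between all $G$-orbits of the simple variety $X$ and all faces of $(\Ccal_X, \Dcal_X)$.

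Next, I would invoke Proposition~\ref{bstable_affine_chart} to fix the $B$-stable open affine chart $X_{Y,B}$ with $X = G \cdot X_{Y,B}$, so that every $G$-orbit of $X$ meets $X_{Y,B}$ in a nonempty open subset. Set $A = \Cbb[X_{Y,B}]$. The spherical hypothesis forces $A$ to be $B$-multiplicity-free, so it decomposes as $\bigoplus_{\chi \in \Gamma} A_\chi$ over a finitely generated sub-semigroup $\Gamma \subset \Mcal$. The key identification is $\Gamma = \Ccal_X^{\vee} \cap \Mcal$: by the second item of Proposition~\ref{bstable_affine_chart}, $X_{Y,B}$ is disjoint from the colors of $G/H$ lying outside $\Dcal_X$, so the only divisorial $B$-valuations contributing to $A$ are those coming from the generators of $\Ccal_X$, namely the elements of $\rho(\Dcal_X)$ and $\Vcal_X$.

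With this dictionary in hand, the standard toric-style argument yields the bijection. $G$-orbits of $X$ correspond to $G$-stable (equivalently, $B$-stable and $T$-weighted) radical ideals of $A$, which by multiplicity-freeness correspond to face ideals of $\Gamma$, and hence dually to faces of $\Ccal_X$. For a given orbit $Z$, one checks directly that the $G$-stable divisors of $X_Z$ are exactly those $D \in \Vcal_X$ with $\rho(D) \in \Ccal_{X_Z}$, while a color $D \in \Dcal_X$ lies in $\Dcal_{X_Z}$ iff $Z \subseteq \overline{D}$, iff $\rho(D) \in \Ccal_{X_Z}$; this is precisely the face condition $\Dcal_{X_Z} = \Dcal_X \cap \rho^{-1}(\Ccal_{X_Z})$ of Definition~\ref{definition_colored_cones}.

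The main obstacle is setting up the semigroup-ideal dictionary cleanly: the spherical (multiplicity-free) hypothesis is exactly what lets one control $G$-invariant subspaces of $A$ by their $B$-weight supports, so that an orbit closure is determined by the face of $\Gamma$ on which its defining eigenfunctions vanish. Once this is in place, injectivity of $Z \mapsto (\Ccal_{X_Z}, \Dcal_{X_Z})$ follows from $\overline{Z}$ being determined by its radical ideal, and surjectivity follows by reversing the construction: a face $(\Ccal', \Fcal')$ produces a $G$-stable radical ideal of $A$ whose zero locus is the closure of a unique $G$-orbit.
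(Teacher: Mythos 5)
The paper does not prove this proposition (it is quoted from \cite{Kno91}), so your sketch has to stand on its own, and it has a genuine gap at its core. The ``toric-style'' dictionary you invoke would put the $G$-orbits of the simple variety $X$ in bijection with \emph{all} faces of the cone $\Ccal_X$, whereas the correct statement is a bijection with the faces of the \emph{colored} cone, i.e.\ those faces of $\Ccal_X$ whose relative interior meets the valuation cone $\Vcal$ (cf.\ Definition \ref{definition_colored_cones}). For toric or horospherical $X$ one has $\Vcal=\Ncal_{\Rbb}$ and the two notions coincide, but not in general. The paper itself contains a counterexample to your count: for $G/H=(SL_2\times\Cbb^{*})/(N\times\set{1})$ and the conical embedding with $\Ccal_Y=\Rbb_{\geq 0}\set{2\gamma^{*},\,e^{*}-2\gamma^{*}}$, the cone $\Ccal_Y$ has four faces, but the ray $\Rbb_{\geq 0}\,2\gamma^{*}$ meets $\Vcal=\set{x\leq 0}$ only at the origin and carries no $G$-orbit; the cone has exactly three orbits. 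Your argument never uses $\Vcal$, so it cannot see this, and the ``surjectivity by reversing the construction'' step fails precisely there: the face ideal attached to such a spurious face does not cut out the closure of a $G$-orbit whose colored cone is that face. The missing ingredient is valuation-theoretic: a $G$-stable irreducible subvariety is the center of a $G$-invariant valuation, which forces $\Vcal\cap\operatorname{relint}(\Ccal_{X_Z})\neq\varnothing$; conversely one produces the orbit attached to a legitimate face by taking the center of a valuation in its relative interior.

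There are also problems with the algebraic setup. The chart $X_{Y,B}$ is only $B$-stable, so $A=\Cbb[X_{Y,B}]$ is a $B$-algebra and ``$G$-stable radical ideals of $A$'' is not meaningful as written; moreover $B$ is not reductive, so $A$ does not decompose as $\bigoplus_{\chi}A_{\chi}$ --- only the set of $B$-eigenvectors $A^{(B)}$ is multiplicity-free, with weight semigroup $\Ccal_X^{\vee}\cap\Mcal$ (that identification is correct and is the useful part of your sketch). A $B$-stable radical ideal is not determined by its $B$-eigenvectors, so passing from orbit closures to face ideals requires the local structure theorem (the $P$-equivariant isomorphism of $X_{Y,B}$ with $P^{u}\times Z$ for a toric slice $Z$) rather than a direct highest-weight argument. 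With that theorem and the valuation-cone condition reinstated, your outline can be repaired into the standard proof of \cite{Kno91}.
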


Let \( D \in \Dcal \) be a color of \( G/H \). Its stabilizer \( P_{D} \supset B \) is a parabolic subgroup of \( G \). For all subset \( \Fcal \subset \Dcal \), let \( P_{\Fcal} := \cap_{D \in \Fcal^c} P_D \). In particular, \( P_{\Dcal} = G \). The following is a quite simple dimension formula for a closed orbit of a spherical embedding. 

\begin{prop} \label{dimension_formula} \cite[Theorem 7.6]{Kno91}
Let \( X \) be a simple spherical embedding with a unique closed \( G\)-orbit \( Z \). Let \( (\Ccal_X, \Dcal_X) \) be the colored cone of \( X \). Then 
\[ \dim Z = \text{rank}(X) - \dim \Ccal_X + \dim G / P_{\Dcal_X} \]
If \( X \) is non-simple then we have the same equality for every colored cone in the colored fan of \( X \). 
\end{prop}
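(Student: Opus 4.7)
The plan is to invoke the local structure theorem for simple spherical varieties to reduce the dimension count on the closed orbit to a calculation on an affine toric variety. Since the last sentence of the statement reduces the non-simple case to each simple sub-embedding $X_Y$, I may assume $X$ is simple with unique closed $G$-orbit $Z$. By Proposition \ref{bstable_affine_chart}, the complement $X \setminus X_{Z,B}$ is the union of the closures of the colors in $\Dcal \setminus \Dcal_X$, so the parabolic subgroup of $G$ stabilizing $X_{Z,B}$ is exactly $P := P_{\Dcal_X}$. I would fix a Levi decomposition $P = P^u \rtimes L$.

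By the local structure theorem of Brion-Luna-Vust (a refinement of Proposition \ref{bstable_affine_chart}), there exists a closed $L$-stable affine subvariety $S \subset X_{Z,B}$ such that the multiplication map $P^u \times S \to X_{Z,B}$ is a $P$-equivariant isomorphism; moreover $L$ acts on $S$ through a quotient torus $T'$, making $S$ a simple affine toric $T'$-embedding. I would then verify that the toric data of $S$ matches the spherical data of $X$. The open $B$-orbit of $G/H$ corresponds under the product decomposition to $P^u \times (\text{open } T'\text{-orbit of } S)$, and its character lattice is $\Mcal$, so $T'$ has rank equal to $\text{rank}(X)$ and $\dim S = \text{rank}(X)$. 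Moreover, the $T'$-stable prime divisors of $S$ arise either as restrictions of $G$-stable divisors in $\Vcal_X$ or of colors in $\Dcal_X$ (colors outside $\Dcal_X$ are removed when passing to $X_{Z,B}$), and their images under the valuation map agree with the corresponding elements of $\Ncal_{\Rbb}$, so the toric cone of $S$ coincides with $\Ccal_X$.

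The dimension count is then immediate: by Proposition \ref{bstable_affine_chart}, $Z \cap X_{Z,B}$ is the dense $B$-orbit of $Z$ and hence has dimension $\dim Z$. Under the product decomposition this intersection equals $P^u \times (Z \cap S)$, where $Z \cap S$ is the unique closed $T'$-orbit of the affine toric variety $S$. The standard toric formula yields $\dim(Z \cap S) = \text{rank}(X) - \dim \Ccal_X$, while $\dim P^u = \dim G/P_{\Dcal_X}$, and summing gives the claimed equality. The main obstacle is the precise identification of the toric data of $S$ with $(\Ccal_X, \text{rank}(X))$, which requires careful bookkeeping of how $B$-stable divisors in $X$ restrict to $T'$-stable divisors in $S$; once this matching is granted, the dimension count reduces to adding rank minus dim cone to $\dim G/P_{\Dcal_X}$.
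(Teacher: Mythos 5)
The paper does not prove this proposition: it is quoted verbatim from Knop's survey, so there is no internal proof to compare against. Judged on its own terms, your argument has a genuine gap at its central step, namely the claim that the local structure theorem applied to the chart \( X_{Z,B} \), with \( P = P_{\Dcal_X} \) its stabilizer, produces a slice \( S \) on which the Levi \( L \) acts through a quotient torus \( T' \), with \( \dim S = \rank(X) \). This is false whenever \( \Dcal_X \neq \varnothing \). The Brion--Luna--Vust theorem only gives \( X_{Z,B} \cong P^u \times S \) with \( S \) an affine \( L \)-\emph{spherical} variety: the colors in \( \Dcal_X \) are \( B \)-stable but not \( P_{\Dcal_X} \)-stable, so they survive inside \( S \) as genuine colors of the \( L \)-variety \( S \), which is therefore not toric. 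The paper's own Example \ref{rank_one_horospherical_simple_embeddings} already defeats the claim: for \( \Cbb^2 \) as the embedding of \( SL_2/U \) with colored cone \( (\Rbb_{\geq 0}\rho(D), \set{\rho(D)}) \) one has \( \Dcal_X = \Dcal \), \( P_{\Dcal_X} = SL_2 \), \( X_{Z,B} = \Cbb^2 \), \( P^u \) trivial, so the slice is forced to be all of \( \Cbb^2 \) with \( L = SL_2 \) acting irreducibly --- not a toric variety, and of dimension \( 2 \neq \rank(X) = 1 \). Consequently the ``standard toric formula'' you invoke for \( \dim(Z \cap S) \) has nothing to apply to; the problem is not the ``bookkeeping'' you flag at the end but the failure of the structural input itself.

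The toric-slice form of the local structure theorem is available either for the open \( B \)-orbit of \( G/H \) (with the smaller parabolic \( P_{\varnothing} = \bigcap_{D \in \Dcal} P_D \)), or for the \( B \)-chart of a closed orbit in a \emph{toroidal} embedding. A repair along your lines would first pass to the decoloration \( \wt{X} \to X \) of Prop. \ref{prop_spherical_dominated_by_toroidal}, where your argument is valid and yields \( \dim \wt{Z} = \rank(X) - \dim \Ccal_X + \dim G/P_{\varnothing} \) for the closed orbit \( \wt{Z} \) of \( \wt{X} \), and then compare \( \wt{Z} \) with \( Z \): the induced surjection \( \wt{Z} \to Z \) of \( G \)-homogeneous spaces accounts exactly for the discrepancy \( \dim G/P_{\varnothing} - \dim G/P_{\Dcal_X} \). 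Alternatively one can avoid slices altogether by writing \( \dim Z = \rank(Z) + \dim G/P_Z \) for \( Z \) viewed as a spherical homogeneous space, and proving separately that \( \rank(Z) = \rank(X) - \dim \Ccal_X \) and that the stabilizer \( P_Z \) of the open \( B \)-orbit of \( Z \) equals \( P_{\Dcal_X} \); both of these facts require arguments that your proposal does not supply.
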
 

\subsubsection{Some classes of spherical varieties}

\begin{defn}
Let \( X \) be a spherical variety. If $\Dcal_X = \varnothing$, we say that $X$ is a \emph{toroidal variety}. 
\end{defn}

\begin{prop} \label{prop_spherical_dominated_by_toroidal} \cite[3.3]{Bri91}
Every \( G \)-spherical variety \( X \) with colored fan \( \Fcal_X \) is dominated by a \( G\)-toroidal variety \( \wt{X} \) with colored fan \( \wt{\Fcal} = \cup_{\Ccal_X \in \Fcal_X} (\Ccal_X, \varnothing) \), i.e. there exists a proper birational \( G \)-equivariant map \( \pi: \wt{X} \to X \). The datum \( (\wt{X}, \pi) \) is called the \textit{decoloration} of \( X \). 
\end{prop}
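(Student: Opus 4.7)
The plan is to verify the statement by combining the Luna--Vust classification of spherical embeddings (Thm.~\ref{spherical_embeddings_classification}) with the standard equivariant morphism/properness criteria in Luna--Vust theory. Concretely, I would first exhibit a colored fan, then invoke the classification to produce the embedding, and finally read off the desired properties of the canonical map from the combinatorics.

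Step one: construct $\wt{\Fcal}$. For each colored cone $(\Ccal_X,\Fcal_X)$ of $X$, I form the ``decolored'' datum $(\Ccal_X\cap\Vcal,\varnothing)$ (interpreted as $(\Ccal_X,\varnothing)$ in the shorthand of the statement, which lies in $\Vcal$ once the colors are discarded). Then I check that this is a colored cone: strict convexity and polyhedrality are inherited from $\Ccal_X$, the set of colors is empty (so trivially $0\notin\rho(\varnothing)$), and $\Ccal_X\cap\Vcal$ has non-empty relative interior in $\Vcal$ because already the relative interior of $\Ccal_X$ meets $\Vcal$ by Defn.~\ref{definition_colored_cones}. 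I then verify the two axioms of Defn.~\ref{definition_colored_fans}: faces of a decolored cone are themselves decolorations of faces of the original (and faces of colored cones are again colored cones in $\Fcal_X$), and the uniqueness property for valuations $v\in\Vcal$ is inherited from $\Fcal_X$ since decoloration does not change the cones inside $\Vcal$.

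Step two: apply Thm.~\ref{spherical_embeddings_classification} to obtain a $G/H$-embedding $\wt{X}$ whose colored fan is $\wt{\Fcal}$. Since no colored cone of $\wt{\Fcal}$ contains a color, $\Dcal_{\wt{X}}=\varnothing$, so $\wt{X}$ is toroidal by definition. Step three: construct $\pi$. The Luna--Vust criterion for the existence of a $G$-equivariant morphism extending the identity on $G/H$ requires that each colored cone of $\wt{\Fcal}$ be dominated by a colored cone of $\Fcal_X$ in the sense of inclusion of cones and of color sets; this is immediate from the construction, since $(\Ccal_X\cap\Vcal,\varnothing)$ is dominated by $(\Ccal_X,\Fcal_X)$. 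Birationality is automatic because both fans contain the trivial colored cone corresponding to the open orbit $G/H$, so $\pi$ is the identity there.

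The main technical point, and the step I expect to need the most care, is the properness of $\pi$. I would use the properness criterion of Luna--Vust: $\pi$ is proper if and only if every valuation in $\Vcal$ lying in the support of the target fan also lies in the support of the source fan. Here both supports intersect $\Vcal$ in the same subset, namely $\bigcup_{\Ccal_X\in\Fcal_X}(\Ccal_X\cap\Vcal)$, so the criterion is satisfied; this is the only non-formal input and is where the hypothesis that the cones $\Ccal_X$ (and not just $\Ccal_X\cap\Vcal$) agree before and after decoloration genuinely matters. Combining the three steps yields the decoloration $(\wt{X},\pi)$ with the asserted properties.
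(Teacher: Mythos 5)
The paper offers no proof of this proposition --- it is quoted from Brion --- so the comparison is with the standard Luna--Vust argument, which is exactly what you reconstruct, and your outline is essentially correct. In particular you were right to read \( (\Ccal_X,\varnothing) \) as \( (\Ccal_X\cap\Vcal,\varnothing) \): by Definition \ref{definition_colored_cones} a colorless colored cone must be generated by finitely many elements of \( \Vcal \), hence contained in \( \Vcal \), so the formula in the statement is an abuse of notation (harmless for horospherical spaces, where \( \Vcal=\Ncal_{\Rbb} \), which is the only case the paper actually uses later). One step of your sketch is stated too quickly: a face of \( \Ccal_X\cap\Vcal \) need not be of the form \( F\cap\Vcal \) with \( F \) a face of \( \Ccal_X \), since new faces can appear along \( \del\Vcal \); so the face-closure and disjointness axioms for \( \wt{\Fcal} \) are not literally ``inherited'' and must also be verified for these new faces. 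The verification does go through --- for the top cones one uses that the relative interior of \( \Ccal_X\cap\Vcal \) is contained in the relative interior of \( \Ccal_X \), because the latter meets \( \Vcal \) --- but this is the one place in the argument with genuine content. The remaining steps (existence of \( \pi \) from the domination of \( (\Ccal_X\cap\Vcal,\varnothing) \) by \( (\Ccal_X,\Dcal_X) \), properness from the equality of supports inside \( \Vcal \), and birationality from both fans containing the trivial cone of the open orbit) are correct as you state them.
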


\begin{ex}
In Ex. \ref{rank_one_horospherical_simple_embeddings}, the toroidal embedding \(\text{Bl}_{0}(\Cbb^2) \) is the decoloration of \( \Cbb^2 \). 
\end{ex}

\begin{defn}A spherical space \( G/H \) is  called \emph{horospherical} if \( H \) contains a maximal unipotent subgroup of \( G \). 
\end{defn}

\begin{prop} \cite{BP87}  \cite{Pas08} \label{horospherical_space_characterization_proposition}
Let \( (G,B,T) \) be a root system. Let \( \Mcal(B) = \Mcal(T) \) be the weight lattice of \( B \). 
\begin{itemize}
\item A horospherical space \( G/H \) is uniquely determined by a couple \( (\Mcal_I, P_I) \) where \( P_I \) is a parabolic subgroup of \( G \) associated to a subset \( I \) of simple roots of \( G \), and \( \Mcal_I \) is the sublattice of \( \Mcal(B) \) vanishing on coroots of \( I \). 

\item A spherical space $G/H$ is horospherical if and only if the valuation cone $\Vcal$ is the whole vector space $\Ncal_{\Rbb}$. 

\item Moreover, let \( Q \) be the left-stabilizer of the open Borel-orbit and \( P \) its opposite parabolic subgroup. Then \( P = P_I = N_G(H) \) and \( G/H \) is a equivariant torus bundle over the flag variety \( G/P \) with fiber the torus \( P/H \).  
\end{itemize}
\end{prop}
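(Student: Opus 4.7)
The plan is to establish (1) and (3) via a structural classification of subgroups containing a maximal unipotent, and then to deduce (2) by combining this description with Knop's theorem on the shape of the valuation cone. I would treat the three items in this order.

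For (1) and (3), the key ingredient is the classical structural result (see e.g.\ \cite{Pas08}) that, after suitably conjugating $H$ so that $H$ contains the maximal unipotent opposite to the fixed $B$, the normalizer $P := N_G(H)$ is a standard parabolic $P_I$ of some type $I \subset S$, $H$ contains the unipotent radical $U_{P_I}$, and the image $H/U_{P_I}$ inside the Levi $L_I$ contains the derived subgroup $[L_I,L_I]$. Consequently $L_I/(H/U_{P_I})$ is a quotient torus of the abelianized Levi $L_I/[L_I,L_I]$, whose character lattice is precisely the sublattice $\Mcal_I \subset \Mcal(B)$ of characters vanishing on the coroots of $I$. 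This recovers $H$ uniquely from the pair $(\Mcal_I, P_I)$, which gives (1). For (3): by the same structural analysis the left-stabilizer $Q$ of the open $B$-orbit $BH/H$ is a parabolic, and its opposite parabolic $P$ coincides with $N_G(H) = P_I$. The projection $G/H \to G/P_I$ is then tautologically a torus bundle with fiber $P_I/H$, which is a torus by the discussion above.

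For (2), I would treat the two implications separately. For the forward direction, the fibration from (3) shows that $B$-semi-invariant rational functions on $G/H$ are pullbacks of characters of the fiber torus, identifying $\Mcal$ with the character lattice of $P_I/H$. To check $\Vcal = \Ncal_{\Rbb}$, I would trivialize $G/H$ over the open Bruhat cell inside $G/P_I$, giving an open inclusion of the form $(\text{affine space}) \times (P_I/H) \subset G/H$, and observe that every rational valuation on the fiber torus lifts by $G$-invariance to a $G$-invariant valuation on $G/H$ whose image under $\rho$ is the original torus valuation. For the converse, I would invoke Knop's theorem \cite{Kno91}, asserting that the valuation cone of any spherical space is a fundamental domain for a finite reflection group (the \emph{little Weyl group}) acting on $\Ncal_{\Rbb}$; it equals $\Ncal_{\Rbb}$ if and only if this group is trivial, which is precisely Knop's characterization of horospherical spaces.

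The main obstacle is the converse direction of (2), which is the deep part of Knop's theory of valuation cones and would be cited rather than redone. The remaining steps are essentially algebraic bookkeeping once the structural classification of subgroups containing a maximal unipotent is in hand.
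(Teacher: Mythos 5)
The paper does not prove this proposition at all: it is stated purely as a citation of \cite{BP87} and \cite{Pas08}, so there is no in-paper argument to compare against. Your outline is a faithful reconstruction of the standard proofs in those references: the classification of subgroups containing a maximal unipotent (normalizer a standard parabolic $P_I$, $H$ cut out inside $P_I$ by a sublattice of characters vanishing on the coroots of $I$) gives items (1) and (3), and the characterization of horospherical spaces by $\Vcal = \Ncal_{\Rbb}$ is exactly Knop's little-Weyl-group theorem in one direction. Two small remarks. First, as stated the pair $(\Mcal_I, P_I)$ is redundant if $\Mcal_I$ literally denotes the full sublattice of characters vanishing on coroots of $I$; the datum that actually varies is the sublattice $M \subset \Mcal_I$ realized as the character lattice of the quotient torus $P_I/H$, and your phrasing inherits this imprecision from the statement rather than introducing it. Second, in the forward direction of (2), ``every rational valuation on the fiber torus lifts by $G$-invariance'' is the one step that deserves to be made concrete: the clean mechanism is that $P_I/H \simeq (N_G(H)/H)^0 = T_H$ acts on all of $G/H$ on the right, commuting with $G$, so each one-parameter subgroup $\lambda$ of $T_H$ defines the $G$-invariant valuation $f \mapsto \operatorname{ord}_{t=0} f(\lambda(t)^{-1}\cdot x)$, and these already fill out $\Ncal_{\Qbb}$ since $T_H$ has full rank; equivalently one can use that the product of $B$-eigenfunctions of weights $\chi_1,\chi_2$ is again a $B$-eigenfunction of weight $\chi_1+\chi_2$ in the horospherical case, so any linear functional on $\Mcal$ extends to a valuation. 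With that step filled in, the proposal is correct.
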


\begin{rmk}
Let \( U \) be a maximal unipotent subgroup of \( B \). If \( H \supset U \), then the horospherical space \( G/H \) has an open dense orbit under \( B^{-} \), isomorphic to \( B^{-}/ B^{-} \cap H\). 
\end{rmk}

\begin{ex}
The spaces \( SL_2/ U \) and \( SL_3 / U^{-} \) in Examples \ref{rank_one_horospherical}, \ref{rank_two_horospherical} are all horospherical. 
\end{ex}

\subsubsection{Affine embeddings} 

\begin{prop} \cite[Theorem 7.7]{Kno91} \label{affine_embedding_criterion}
\begin{enumerate}
\item  A spherical space \( G/H \) is quasi-affine if and only if \( \rho(\Dcal) \) does not contain \( 0 \) and generates a strongly convex cone. 
\item Let \( X \) be a spherical embedding of \( G/H \). Then \( X \) is affine if and only if \( X \) is simple and that there exists \( \chi \in \Mcal \) satisfying 
\[ \chi|_{\Vcal} \leq 0, \quad \chi|_{\Ccal_X} = 0, \quad \chi |_{\rho(\Dcal_X^c)} > 0 \]
In particular, \( G/H \) is affine if and only if \( \Vcal \) and \( \rho(\Dcal) \) are separated by a hyperplane. 
\end{enumerate}
\end{prop}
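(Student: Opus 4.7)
The strategy is to prove (2) first, then deduce (1) by specializing to the trivial embedding. The underlying dictionary is the identity $\nu_D(f_\chi) = \sprod{\rho(D), \chi}$ for a $B$-semi-invariant rational function $f_\chi$ of weight $\chi$: regularity at a $B$-stable divisor of $X$ is equivalent to non-negativity of $\chi$ on the corresponding point of $\Ncal_\Rbb$, so the conditions on $\chi$ amount to prescribing the divisor of $f_\chi$ on $X$.

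For the direct direction of (2), assuming $X$ is affine, the simplicity follows from $\Cbb[X]^G = \Cbb$ (since $G$ has a dense orbit), forcing the GIT quotient $\Spec(\Cbb[X]^G)$ to be a point and hence the existence of a unique closed $G$-orbit $Y$. To produce $\chi$, I would invoke Proposition \ref{bstable_affine_chart}: on a simple spherical $X$, the boundary $X \setminus X_{Y,B} = \bigcup_{D \in \Dcal \setminus \Dcal_X} D$ is a union of globally generated $B$-Cartier divisors. Taking an appropriate positive integral combination yields a $B$-semi-invariant section whose associated weight $\chi$ vanishes on the divisors of $\Vcal_X$ and $\Dcal_X$ (giving $\chi|_{\Ccal_X} = 0$) while being strictly positive on $\rho(\Dcal_X^c)$. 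The non-positivity $\chi|_\Vcal \leq 0$ is the more delicate point: it comes from the fact that every $\nu \in \Vcal$ can be realized as a divisorial valuation on some $G$-equivariant enlargement $X' \supset X$, and if $X$ is affine then $f_\chi$ still extends rationally to $X'$; the affineness of $X$ prohibits $\nu$ from being a pole, yielding $\sprod{\nu, \chi} \leq 0$.

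For the converse of (2), the plan is that the hypotheses on $\chi$ force $f_\chi$ to be regular on $X$ (nonnegativity on all $B$-stable divisors of $X$) with zero locus exactly $\bigcup_{D \in \Dcal \setminus \Dcal_X} D$. By Proposition \ref{bstable_affine_chart} the principal open $X_{f_\chi}$ coincides with the affine open $X_{Y,B}$. One then considers the finite-dimensional $G$-submodule $V \subset \Cbb[X]$ generated by the $G$-translates of $f_\chi$: it separates points and tangent vectors on each $G$-translate $gX_{Y,B}$ (these cover $X$), and the induced $G$-equivariant map $X \to V^*$ is a closed immersion, so $X$ is affine.

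Part (1) follows by applying (2) to $X = G/H$ itself, which has $\Ccal_X = \{0\}$ and $\Dcal_X = \varnothing$: the condition becomes the existence of $\chi \in \Mcal$ with $\chi|_\Vcal \leq 0$ and $\chi|_{\rho(\Dcal)} > 0$, i.e.\ the hyperplane separation of $\Vcal$ from $\rho(\Dcal)$, which is equivalent to $0 \notin \rho(\Dcal)$ together with $\Rbb_{\geq 0}\rho(\Dcal)$ being strongly convex and meeting $\Vcal$ only at the origin. For quasi-affineness, which is formally weaker than affineness, one passes from $G/H$ to an affine spherical embedding whose colored cone $(\Rbb_{\geq 0}\rho(\Dcal), \Dcal)$ is the one determined by these combinatorial conditions; conversely, any affine embedding of $G/H$ exhibits $G/H$ as quasi-affine. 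The main obstacle in the argument is the functoriality step in the forward direction of (2), namely promoting the regularity condition from divisorial valuations of $X$ (in $\Vcal_X$) to all of $\Vcal$; one handles this by passing to suitable $G$-equivariant modifications within the Luna-Vust framework.
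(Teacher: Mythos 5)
First, a point of comparison: the paper does not prove this proposition at all --- it is quoted directly from Knop's Luna--Vust survey --- so there is no in-paper argument to measure yours against, and I can only assess the sketch on its own merits. Your architecture (the dictionary $\nu_D(f_\chi)=\sprod{\rho(D),\chi}$, the role of $X_{Y,B}$, deducing (1) from (2)) is the right one, but two load-bearing steps are wrong as stated. In the forward direction, your justification of $\chi|_{\Vcal}\leq 0$ does not work: realizing $\nu\in\Vcal$ as a $G$-stable divisor $D_\nu$ on an enlargement $X'$ and ``prohibiting $\nu$ from being a pole'' would give $\nu(f_\chi)\geq 0$, i.e.\ $\sprod{\rho(\nu),\chi}\geq 0$, the \emph{opposite} inequality; and in fact $f_\chi$ typically \emph{does} have a pole along such a divisor (for the affine quadric $SL_2/T\subset\Pbb^1\times\Pbb^1$, the eigenfunction of weight $\alpha$ has a pole along the diagonal). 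Affineness of $X$ places no constraint on the order of $f_\chi$ along a divisor lying in $X'\setminus X$. The true mechanism is that $X$ affine forces $X=\Spec\Cbb[X]$, so every $\nu\in\Vcal$ that is nonnegative on all of $\Cbb[X]$ must have a center on $X$, hence lie in $\Ccal_X$; since a $G$-invariant valuation is constant on each simple $G$-submodule of $\Cbb[X]$, this reads $\Vcal\cap\bigl(\Ccal_X+\Rbb_{\geq 0}\rho(\Dcal\setminus\Dcal_X)\bigr)\subset\Ccal_X$, and $\chi$ is then obtained as a separating linear functional. Nothing in your sketch supplies this.

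Second, in the converse direction the claim that $X\to V^{*}$ is a closed immersion, with $V$ the $G$-module generated by $f_\chi$, is false: the hypotheses do not determine $\chi$, and replacing $\chi$ by $2\chi$ replaces $f_\chi$ by $f_\chi^2$. Concretely, for $X=G/H=SL_2/T$ the weight $\chi=2\alpha$ satisfies all three conditions, yet the map to $V_{2\alpha}^{*}$ is invariant under $A\mapsto -A$ and factors through $SL_2/N_{SL_2}(T)$, so it is two-to-one; at the other extreme, when $\Dcal_X=\Dcal$ the criterion is witnessed by $\chi=0$, $f_\chi=1$, and the map is constant. What the hypotheses actually give is that $X$ is covered by the affine opens $X_{gf_\chi}=gX_{Y,B}$; one must then show that finitely many of the $gf_\chi$ generate the unit ideal of $\Cbb[X]$ (equivalently, that the open immersion $X\to\Spec\Cbb[X]$ is onto), which is where the real work lies and which your argument does not address. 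Finally, in (1) the passage to an affine embedding with colored cone $(\Rbb_{\geq 0}\rho(\Dcal),\Dcal)$ presupposes that the relative interior of that cone meets $\Vcal$ --- part of the definition of a colored cone --- and this is not checked; in general one must first enlarge the cone by suitable elements of $\Vcal$ while preserving strict convexity. Since the proposition is used in the paper purely as a citation, these gaps would have to be closed by actually following Knop's proof rather than the routes sketched here.
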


\begin{ex}
Following Ex. \ref{rank_one_horospherical_simple_embeddings}, the only affine embeddings of the spherical space are the trivial embedding \( \Cbb^{2}\backslash \set{0} \) and \( \Cbb^2 \), as one can readily check from the previous criterion. 
\end{ex}

\subsection{Automorphisms group and canonical divisor}

\subsubsection{Automorphisms group}
Let \( G/H \) be a spherical homogeneous space. We denote by \( N_G(H) \) the normalizer of \( H \) in \( G \), which acts on \( H \) \textit{on the right} by \( p.gH = gp^{-1} H \). The quotient \( N_G(H)/H \) is then identified with a subgroup of \( G \)-equivariant automorphisms of \( G/H \) as follows. 

Let \( \sigma \in \Aut_G(G/H) \) and \( G/H \subset (X,x) \) be a spherical embedding, then \( H\) is also the stabilizer of \( \sigma(x) \). However, \( \sigma \) is only a bijection between the colors of \( (G/H,x) \) and the colors of \( (G/H,\sigma(x)) \) (see \cite{Los09}), but if \( \sigma \in \Aut^0_G(G/H) \), then it is exactly the identity mapping between these sets of colors, therefore \( (X,x) \) and \( (X, \sigma(x)) \) are determined by the same colored fan. It follows that \( \sigma \) extends to a \( G \)-equivariant automorphism of \( X \) by the classification theorem \ref{spherical_embeddings_classification}.  Moreover, by \cite[Proposition 1.8]{Tim11}, we have \( \Aut_G(G/H) \simeq N_G(H)/H \). In particular, 

\begin{prop}  \label{automorphism_group_description} 
Let \( X \) be a spherical embedding of \( G/H \). Then 
\[ \Aut_G^0(X) \simeq \Aut_G^0(G/H) \simeq (N_G(H)/H)^0 \] 
\end{prop}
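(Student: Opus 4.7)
The proof essentially assembles the three facts spelled out in the paragraph preceding the statement, so the plan is to organize them into two clean isomorphisms and verify that the arrows compose correctly.

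First, I would address the second isomorphism $\Aut_G^0(G/H) \simeq (N_G(H)/H)^0$. This is immediate from the cited identification $\Aut_G(G/H) \simeq N_G(H)/H$ of \cite[Proposition 1.8]{Tim11}, by passing to connected components on both sides; the isomorphism is induced by $pH \mapsto (gH \mapsto gp^{-1}H)$, which is algebraic, so it restricts to an isomorphism of identity components.

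Second, I would set up the restriction map $r: \Aut_G^0(X) \to \Aut_G^0(G/H)$. This is well-defined because $G/H$ is the unique open $G$-orbit in $X$, hence is preserved by any $G$-equivariant automorphism, and it is injective because $G/H$ is dense in $X$ and automorphisms of a variety are determined by their values on a dense open subset. The bulk of the argument is surjectivity: given $\sigma \in \Aut_G^0(G/H)$, I must extend $\sigma$ to a $G$-equivariant automorphism of $X$. The approach is to view the two data $(X, x)$ and $(X, \sigma(x))$ as two $G/H$-spherical embeddings with the same underlying variety $X$ but possibly different base points. Both have open orbit $G/H$ with the same stabilizer $H$, so each is classified by Theorem \ref{spherical_embeddings_classification} by a colored fan in $\Ncal_{\Rbb}$. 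Since $\sigma$ lies in the connected component $\Aut_G^0(G/H)$, the cited result of Losev guarantees that $\sigma$ acts as the identity on the set of colors $\Dcal$; consequently the images $\rho(\Dcal_X)$ and the cones $\Ccal_X$ attached to each closed orbit in the two embeddings $(X,x)$ and $(X,\sigma(x))$ coincide. Thus the two colored fans are equal, and the classification theorem yields a $G$-equivariant isomorphism $\tilde\sigma : (X,x) \to (X,\sigma(x))$, which is then an element of $\Aut_G(X)$ restricting to $\sigma$ on $G/H$. Since $\sigma$ was in the identity component and the extension is canonical on fans, $\tilde\sigma$ lies in $\Aut_G^0(X)$.

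The step requiring the most care is verifying that $\sigma \in \Aut_G^0(G/H)$ really does act trivially on $\Dcal$; this is the content invoked from \cite{Los09} and is what forces us to work with connected components rather than the full automorphism group (otherwise $\sigma$ would only permute colors, and the fans of $(X,x)$ and $(X,\sigma(x))$ could differ). Everything else—well-definedness of $r$, its injectivity, and the composition $\Aut_G^0(X) \simeq \Aut_G^0(G/H) \simeq (N_G(H)/H)^0$—is then formal.
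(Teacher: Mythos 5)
Your proposal is correct and follows essentially the same route as the paper: the identification $\Aut_G(G/H) \simeq N_G(H)/H$ from \cite[Proposition 1.8]{Tim11}, injectivity of restriction by density of the open orbit, and surjectivity via the fact (from \cite{Los09}) that elements of the identity component fix each color, so that $(X,x)$ and $(X,\sigma(x))$ share the same colored fan and Theorem \ref{spherical_embeddings_classification} provides the extension. The only point you elaborate beyond the paper is the (correct, if briefly justified) observation that the extension of a $\sigma$ in the identity component again lands in $\Aut_G^0(X)$.
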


\begin{prop} \cite[5.2]{BP87} \label{automorphism_group_dimension}
  Let \( \text{lin} \Vcal \) be the linear part of the valuation cone \( \Vcal \). The group \( N_G(H)/H \) is diagonalizable, of dimension equal to \( \dim \text{lin} \Vcal \).
\end{prop}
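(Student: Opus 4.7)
The plan is to realize $N_G(H)/H$ as a closed subgroup of the algebraic torus $T_{\Mcal} := \Hom_{\Zbb}(\Mcal,\Cbb^*)$, which immediately yields diagonalizability, and then to compute the dimension of the image. For the embedding, let $\sigma \in N_G(H)/H$ and observe that its action on $\Cbb(G/H)$ preserves $\Cbb(G/H)^{(B)}$ together with the $B$-weight of each element; since $\Cbb(G/H)^{(B)}/\Cbb^* \simeq \Mcal$, there is a unique character $c_\sigma \in T_{\Mcal}$ with $\sigma^* f_{\chi} = c_\sigma(\chi) f_\chi$ for every $\chi \in \Mcal$, and $\sigma \mapsto c_\sigma$ defines a homomorphism $\phi : N_G(H)/H \to T_{\Mcal}$. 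Its injectivity rests on the fact that an element fixing every $B$-semi-invariant acts trivially on the open $B$-orbit $BH/H \simeq B/(B \cap H)$, hence on all of $G/H$; combined with the faithfulness of the $N_G(H)/H$-action on $G/H$ (an $n \in N_G(H)$ fixing $eH$ must lie in $H$), this forces $\sigma$ to be trivial. Closedness of the image is automatic since $\phi$ is a morphism of algebraic groups, so $N_G(H)/H$ is a subtorus of $T_{\Mcal}$ and therefore diagonalizable.

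The second step is to identify the image $\phi(N_G(H)/H)$ as a subtorus of $T_{\Mcal}$ whose character lattice is $\Mcal/\Mcal_0$, where $\Mcal_0 := \{\chi \in \Mcal : c_\sigma(\chi) = 1 \text{ for all } \sigma\}$. A weight lies in $\Mcal_0$ exactly when its eigenfunction is $N_G(H)$-invariant, i.e., descends along the fibration $G/H \to G/N_G(H)$. Hence $\Mcal_0$ is canonically identified with $\Mcal(G/N_G(H))$, and
\[ \dim \phi(N_G(H)/H) = \rank \Mcal - \rank \Mcal_0, \]
which is the dimension of the kernel of the dual surjection $\pi : \Ncal_{\Rbb} \twoheadrightarrow \Ncal(G/N_G(H))_{\Rbb}$.

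It remains to show $\ker \pi = \text{lin}\,\Vcal$. The projection $\pi$ sends $\Vcal(G/H)$ onto $\Vcal(G/N_G(H))$, and the latter valuation cone is strictly convex, since $N_G(N_G(H)) = N_G(H)$ forces $\Aut_G(G/N_G(H))$ to be trivial; this gives the inclusion $\text{lin}\,\Vcal \subset \ker \pi$. The reverse inclusion is the technical crux: for any rational $v \in \ker \pi$, one must produce a pair of opposite $G$-invariant valuations realizing $v$ and $-v$, so that $v \in \Vcal \cap (-\Vcal)$. The standard Luna--Vust approach is to exhibit a one-parameter subgroup of $N_G(H)/H$ whose image under $\phi$ pairs with weights $\chi$ as $t \mapsto t^{\langle \chi, v \rangle}$; such a subgroup exists because $\ker \pi$ is precisely the Lie algebra of the subtorus $\phi(N_G(H)/H) \subset T_{\Mcal}$ by the dimension count above, and the resulting $\Cbb^*$-action on $G/H$ degenerates any $G$-invariant valuation by $v$ and by $-v$ in the two limits $t \to 0, \infty$. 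I expect this last step to be the main obstacle: the formal group-theoretic steps are essentially bookkeeping with the embedding $\phi$, whereas saturating $\text{lin}\,\Vcal$ by $G$-equivariant automorphisms requires the non-trivial input that every rational direction in $\ker \pi$ is realized geometrically.
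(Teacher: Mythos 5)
This proposition is quoted from Brion--Pauer \cite[5.2]{BP87}; the paper gives no proof of it, so your argument has to stand on its own. Its skeleton --- embed \( N_G(H)/H \) into \( \Hom(\Mcal,\Cbb^{*}) \), identify the character lattice of the image as \( \Mcal/\Mcal_0 \) with \( \Mcal_0 = \Mcal(G/N_G(H)) \), and match \( \ker \pi \) with \( \text{lin}\,\Vcal \) --- is indeed the standard one. The serious problem is that your inclusion \( \text{lin}\,\Vcal \subset \ker \pi \) is circular: you deduce strict convexity of \( \Vcal(G/N_G(H)) \) from the triviality of \( \Aut_G(G/N_G(H)) \), but ``finite \( G \)-automorphism group implies strictly convex valuation cone'' is precisely the hard direction of the proposition you are proving, applied to \( G/N_G(H) \). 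You have also misjudged where the difficulty sits: the inclusion you call the technical crux, \( \ker \pi \subset \text{lin}\,\Vcal \) (one-parameter subgroups of \( \Aut_G \) produce opposite pairs of invariant valuations via order of vanishing at \( t = 0 \) and \( t = \infty \)), is the routine direction, whereas the one you dispatch in a line is where all of Brion--Pauer's work lies. A non-circular route for that inclusion: for rational \( v \in \Vcal \cap (-\Vcal) \) one shows directly that \( \chi \mapsto \sprod{\chi,v} \) defines a \( \Zbb \)-grading of \( \Cbb[G/H] \) (in the quasi-affine case; this is well defined exactly because \( \Vcal \) is cut out by the spherical roots, so \( v \in \text{lin}\,\Vcal \) annihilates every tail occurring in products of isotypic components), and this grading integrates to a \( \Cbb^{*} \)-action commuting with \( G \) --- the same mechanism as Prop.~\ref{prop_proper_source_torus_action}, which the paper quotes from Knop for a closely related purpose. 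Such a \( \Cbb^{*} \) is a one-parameter subgroup of \( \phi(N_G(H)/H) \) with cocharacter \( v \), whence \( v \in \Mcal_0^{\perp} = \ker \pi \).

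A second, more easily repaired gap is the injectivity of \( \phi \). The \( B \)-semi-invariants do \emph{not} separate points of the open \( B \)-orbit: for \( SL_2/U \) the open \( B^{-} \)-orbit is two-dimensional while \( \Mcal \) has rank one, so an element fixing every \( f_{\chi} \) need not act trivially there, and your step (a) fails as stated. The correct argument uses \( G \)-equivariance globally: every finite-dimensional \( G \)-submodule of \( \Cbb[G/H] \) (for a quasi-affine model) is spanned by the \( G \)-translates of its \( B \)-eigenvectors, so a \( G \)-equivariant \( \sigma \) fixing all \( f_{\chi} \) fixes each such submodule pointwise, hence all of \( \Cbb[G/H] \), hence is the identity. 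Two minor further points: a closed subgroup of a torus is diagonalizable but need not be a subtorus (harmless, since only diagonalizability is claimed), and the identity \( N_G(N_G(H)) = N_G(H) \) for spherical \( H \) is itself a nontrivial theorem that you invoke without comment.
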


\subsubsection{Canonical divisor}

\begin{defn}
A normal variety \( X \) is said to be Gorenstein (resp. \( \Qbb\)-Gorenstein) if the canonical divisor \( \Kcal_X \) is Cartier (resp. \( \Qbb\)-Cartier, i.e. \( m \Kcal_X \) is Cartier for some integer \( m > 0 \)). 
\end{defn}

Now let \( G/H \subset X \) be a spherical embedding. Recall that \( \Vcal_X \) and \( \Dcal \) denote the set of \( G \)-stable divisors of \( X \) and colors of \( G/H \), respectively. The following description of the canonical divisor in terms of those data is due to Brion. 

\begin{prop} \cite{Bri89} \label{canonical_divisor_description_prop}
The canonical divisor of \( X \) can be represented by 
\[ \Kcal_X = -\sum_{D_{\nu} \in \Vcal_X} D_{\nu} - \sum_{D \in \Dcal} a_{D} \ol{D} \] 
where \( \ol{D} \) denotes the closure of the color in \( X \) and \( a_D \) depends only on the spherical space \( G/H \).  
\end{prop}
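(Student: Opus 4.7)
The plan is to exhibit a distinguished $B$-semi-invariant rational top-form $\omega$ on $X$ and compute its divisor. Since $\Cbb(G/H)^B = \Cbb$ by Rosenlicht, such a form is unique up to a nonzero scalar, so the divisor it carves out is intrinsic to $X$ and the associated coefficients are well-defined.

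First I would construct $\omega$ on the open $B$-orbit $BH/H \simeq (\Cbb^*)^k \times \Cbb^m$ using a basis of $B$-semi-invariant coordinates, combining their differentials with logarithmic derivatives on the torus factor. One checks directly that it is $B$-semi-invariant of some weight $\chi_\omega \in \Mcal$, and extends rationally to $X$. Since $\omega$ is $B$-semi-invariant, $\mathrm{div}(\omega)$ is a $B$-stable Weil divisor on $X$, supported in the complement of the open $B$-orbit of $X$. By Prop.~\ref{bstable_affine_chart}, together with the standard fact that the irreducible $B$-stable divisors of $X$ are precisely the elements of $\Vcal_X$ and the closures $\ol{D}$ for $D \in \Dcal$, we may write
\[ -\Kcal_X \;=\; \mathrm{div}(\omega) \;=\; \sum_{D_\nu \in \Vcal_X} b_\nu\, D_\nu \;+\; \sum_{D \in \Dcal} a_D\, \ol{D}. \]
It then remains to prove that $b_\nu = 1$ for every $D_\nu \in \Vcal_X$ and that the $a_D$ depend only on $G/H$.

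For the color coefficients, note that the generic point of $\ol{D}$ already lies inside the open $G$-orbit $G/H$, so $a_D = -\ord_{\ol{D}}(\omega)$ is computed intrinsically on $G/H$ via the valuation $\nu_D$ attached to $D \in \Dcal$. Since $\omega$ was constructed directly from the combinatorics of the open $B$-orbit inside $G/H$, this order depends only on the datum of the spherical space and not on the chosen embedding $X$.

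For the $G$-stable divisor coefficients, I would pick a generic point of $D_\nu$ and pass to the $B$-stable affine chart $X_{Y,B}$ of Prop.~\ref{bstable_affine_chart} around it. The Luna--Vust local structure theorem decomposes this chart, up to a factor given by the unipotent radical of a suitable parabolic, as a product whose first factor is contained in a flag variety (on which $\omega$ contributes only through its weight $\chi_\omega$ and has neither zeroes nor poles along $D_\nu$) and whose second factor is an affine toric variety in which $D_\nu$ corresponds to a torus-invariant prime divisor. The classical toric identity $\Kcal = -\sum_i D_i$ applied to the toric factor then yields $b_\nu = 1$. This last reduction is the main obstacle: one must verify that the local structure decomposition is compatible with the particular $\omega$ chosen, and that the toric factor captures exactly the irreducible component $D_\nu$ with multiplicity one.
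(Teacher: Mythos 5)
The paper gives no proof of this proposition (it is quoted from \cite{Bri89}), so your proposal must be measured against Brion's argument, which it essentially reproduces: pick a $B$-semi-invariant rational top form, observe that its divisor is supported on the $B$-stable prime divisors $\Vcal_X\cup\{\ol{D}\}_{D\in\Dcal}$, compute the color coefficients inside $G/H$, and reduce the $G$-stable coefficients to the toric formula via the local structure theorem. Two points, however, need repair. First, your opening claim that ``such a form is unique up to a nonzero scalar'' is false whenever $\rank(G/H)>0$: if $\omega$ is $B$-semi-invariant and $f_\chi$ is a $B$-eigenfunction of weight $\chi\neq 0$, then $f_\chi\omega$ is again $B$-semi-invariant, of a different weight, and $\operatorname{div}(f_\chi\omega)=\operatorname{div}(\omega)+\operatorname{div}(f_\chi)$ has different coefficients on the very divisors you are trying to pin down. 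Rosenlicht only gives uniqueness up to scalar \emph{within a fixed weight}. So the divisor you ``carve out'' is not intrinsic; what saves the statement is that it only asserts the existence of \emph{one} representative of $\Kcal_X$ of the displayed shape, and the $a_D$ are those attached to the distinguished choice of weight (the sum of the roots of $P^u$, $P$ being the stabilizer of the open $B$-orbit). Relatedly, semi-invariance of your candidate form cannot be ``checked directly'' on Rosenlicht's abstract isomorphism $BH/H\simeq(\Cbb^*)^k\times\Cbb^m$, which does not intertwine the $B$-action with anything; you need the \emph{equivariant} decomposition $Bx_0\simeq P^u\times A$ from the local structure theorem, and then $\omega$ is the wedge of a translation-invariant form on $P^u$ with the logarithmic form on the torus $A$, semi-invariant of weight $\sum_{\alpha\in\Phi_{P^u}}\alpha$.

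Second, the step you yourself flag as ``the main obstacle'' --- that $b_\nu=1$ for every $D_\nu\in\Vcal_X$ --- is precisely the content of the proposition for $G$-stable divisors and cannot be left as a remark. The standard way to close it is to pass to the decoloration $\pi:\wt{X}\to X$ of Prop.~\ref{prop_spherical_dominated_by_toroidal}: $\pi$ is a $G$-isomorphism over $G/H$ and over the generic point of each $D_\nu$ (these are not colors), so the coefficient may be computed on the toroidal model, where the local structure theorem gives a chart of the form $P^u\times Z$ with $Z$ toric for $A$ and every $G$-stable divisor meeting the chart in $P^u\times(\text{a toric prime divisor of }Z)$. The compatibility you worry about is then automatic: the form constructed above on $P^u\times A$ extends over $P^u\times Z$ as the product of the invariant form on $P^u$ with the toric logarithmic form on $Z$, whose divisor is $-\sum_i D_i$ over all toric prime divisors with multiplicity one. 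With these two repairs the argument is correct and is the same as the cited one; as written, the uniqueness claim is an error and the decisive computation is missing.
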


The property of being Cartier for \( \Kcal_X \) can be determined by a linear function. 

\begin{prop} \cite{Bri89} \label{q_gorenstein_condition}
Let \( G/H \subset X \) be a simple embedding with colored cone \( (\Ccal_X, \Dcal_X) \). Then \( X \) is Gorenstein (resp. \( \Qbb\)-Gorenstein) if and only if there exists \( \beta \in \Mcal \) (resp. \(\in \Mcal_{\Qbb}\)) such that for every \( D_{\nu} \in \Vcal_X \) and every \( D \in \Dcal_X \), 
\[ \sprod{\beta, \rho(D_{\nu}) } = 1, \quad \sprod{\beta, \rho(D)} = a_{D}. \] 
\end{prop}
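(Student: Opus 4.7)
The plan is to combine the explicit description of \(\Kcal_X\) in Prop. \ref{canonical_divisor_description_prop} with the affine local model provided by Prop. \ref{bstable_affine_chart}, and then exploit the fact that on a \(B\)-stable affine chart containing the closed orbit, every \(B\)-stable Cartier divisor is the divisor of a \(B\)-semi-invariant rational function. The Cartier property then translates directly into a linear constraint on the weights via the pairing \(\sprod{\cdot,\cdot}\).

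First I would decompose, using Prop. \ref{canonical_divisor_description_prop},
\[
\Kcal_X = -\sum_{D_\nu \in \Vcal_X} D_\nu \;-\; \sum_{D \in \Dcal_X} a_D \overline{D} \;-\; \sum_{D \in \Dcal \setminus \Dcal_X} a_D \overline{D}.
\]
By Prop. \ref{bstable_affine_chart}, each \(\overline{D}\) with \(D \in \Dcal \setminus \Dcal_X\) is Cartier on \(X\), so the third sum is automatically Cartier. Since \(X = G \cdot X_{Y,B}\) and \(X \setminus X_{Y,B}\) is supported on exactly those colors, the (\(\Qbb\)-)Cartier property of \(\Kcal_X\) reduces to the corresponding property of \(\delta := -\sum_{D_\nu \in \Vcal_X} D_\nu - \sum_{D \in \Dcal_X} a_D \overline{D}\) on the \(B\)-stable affine open \(X_{Y,B}\), then propagating by \(G\)-equivariance (the closed orbit \(Y\) is hit by every \(G\)-orbit's closure, by simplicity of \(X\)).

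Second, I would invoke the following fact: on a normal affine \(B\)-variety with an open \(B\)-orbit, every \(B\)-stable Cartier divisor is principal, and is cut out by a \(B\)-semi-invariant rational function. Indeed, the associated \(B\)-equivariant reflexive rank-one sheaf \(\Ocal(\delta)\) admits a \(B\)-semi-invariant global section by the Lie-Kolchin theorem applied to a finite-dimensional \(B\)-submodule of its space of sections. Hence \(\delta|_{X_{Y,B}} = \text{div}(f_{-\beta})|_{X_{Y,B}}\) for some \(\beta \in \Mcal\). Now for any weight \(\beta\) one computes, directly from the definition of \(\rho\),
\[
\text{div}(f_\beta) = \sum_{D_\nu \in \Vcal_X} \sprod{\beta,\rho(D_\nu)}\, D_\nu \;+\; \sum_{D \in \Dcal} \sprod{\beta,\rho(D)}\, \overline{D}.
\]
Restricting to \(X_{Y,B}\) kills the contribution of colors in \(\Dcal \setminus \Dcal_X\), so matching coefficients of \(\delta|_{X_{Y,B}}\) with \(\text{div}(f_{-\beta})|_{X_{Y,B}}\) yields exactly the claimed conditions \(\sprod{\beta,\rho(D_\nu)} = 1\) for all \(D_\nu \in \Vcal_X\) and \(\sprod{\beta,\rho(D)} = a_D\) for all \(D \in \Dcal_X\). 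The \(\Qbb\)-Gorenstein case is obtained verbatim after passing to \(m\Kcal_X\) for \(m\) sufficiently divisible and working in \(\Mcal_{\Qbb}\).

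The main obstacle I anticipate is making rigorous the first reduction, namely that Cartier-ness of \(\Kcal_X\) globally on \(X\) is detected on \(X_{Y,B}\) modulo the automatically-Cartier pieces supported on \(\Dcal \setminus \Dcal_X\). This requires both Prop. \ref{bstable_affine_chart} (to know those pieces are Cartier and generated by global sections) and the simplicity assumption to transfer Cartier-ness from \(X_{Y,B}\) back to \(X = G \cdot X_{Y,B}\) via \(G\)-equivariance. Once that bookkeeping is in place, the remainder of the proof is linear algebra.
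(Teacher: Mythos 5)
The paper does not prove this proposition at all: it is quoted from Brion's \emph{Groupe de Picard et nombres caract\'eristiques des vari\'et\'es sph\'eriques} \cite{Bri89}, so there is no in-paper argument to compare against. Your reconstruction is, in substance, Brion's original proof: reduce to the $B$-stable affine chart $X_{Y,B}$ of Prop.~\ref{bstable_affine_chart} (where the colors outside $\Dcal_X$ contribute an automatically Cartier summand and then disappear), use $\operatorname{Pic}(X_{Y,B})=0$ to pass from Cartier to principal, replace a local equation by a $B$-semi-invariant one, and read off the coefficients via $v_D(f_\beta)=\sprod{\beta,\rho(D)}$. The reduction step you worried about is fine: the non-Cartier locus of $\omega_X$ is closed, $G$-stable, and therefore either empty or contains the unique closed orbit, so Cartier-ness is detected on $X_{Y,B}$ and propagates to $X=G\cdot X_{Y,B}$ by equivariance; the converse direction (existence of $\beta$ implies Cartier) is immediate from the same dictionary.

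The one place where your sketch is genuinely thinner than it needs to be is the Lie--Kolchin step. If $f$ is a generator of the principal module $\Ocal(\delta)(X_{Y,B})$, the eigenvector produced by Lie--Kolchin inside the finite-dimensional $B$-submodule $\langle B\cdot f\rangle$ has the form $v f$ with $v=\sum c_i\,(b_i\cdot f)/f$ a \emph{regular} function that need not be a unit; a priori its divisor is $-\delta$ plus an extra effective $B$-stable part, which would ruin the coefficient comparison (you would only get inequalities $\sprod{\beta,\rho(D_\nu)}\geq 1$, etc.). The standard fix is to note that the sections of $\Ocal(\delta)$ over $X_{Y,B}$ that fail to generate at the generic point of the closed orbit $Y$ form a \emph{proper $B$-stable} subspace $W$ (properness because $f\notin W$, stability because $Y$ is $B$-stable), and to extract the eigenvector from a $B$-stable complement or from the fiber of the $B$-linearized line bundle along $Y$, so that the semi-invariant section is still a generator near $Y$; since every $B$-stable prime divisor of $X_{Y,B}$ meets $Y$ there (they all contain the closed orbit), generating near $Y$ forces the divisor to be exactly $-\delta$. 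With that point patched, your argument is a faithful rendering of the cited proof.
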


\subsection{Classification of spherical cones} \label{Classification}
If \( G/H \) admits a conical embedding, then it is necessarily quasi-affine. In what follows, we shall consider only quasi-affine spherical spaces, unless mentioned otherwise. 

\subsubsection{Classification and properties}
\begin{defn}
A \emph{spherical cone} is an affine spherical \( G \)-variety \( Y \) that contains a (necessarily unique) fixed point under the right-action of a (non-trivial) torus in \( \Aut_G^0(Y) \simeq (N_G(H)/H)^0 \). 

The fixed point, denoted by \( \set{0_Y} \), is called the \emph{vertex} of \( Y \). If \( Y \) is the embedding of a spherical space \( G/H \), then \( Y \) is said to be a \emph{conical embedding} of \( G/H \). 
\end{defn}

\begin{prop}\cite{Sum74} 
For any spherical cone \( Y \), there exists a \( G \)-equivariant embedding of \( Y \) in an affine space \( \Cbb^N \) such that \( \set{0_Y}\) is sent to \( \set{0}\), \( G \) (resp. \( K \)) corresponds to a subgroup of \( GL_N(\Cbb) \) (resp. \( U_N \)) with their standard action on \( \Cbb^N \), and the torus corresponds to the standard right-action of a diagonal group. 
\end{prop}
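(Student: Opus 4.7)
The plan is to apply Sumihiro's equivariant embedding theorem \cite{Sum74} to the combined action of the reductive group \( \wt{G} := G \times T_H \) on \( Y \), and then exploit compactness and simultaneous diagonalization to obtain the refined matrix forms.

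First I would observe that the vertex is in fact fixed by the full product \( \wt{G} \): every point of the \( G \)-orbit \( G \cdot 0_Y \) is \( T_H \)-fixed, since \( T_H \) commutes with \( G \); as \( 0_Y \) is by assumption the unique \( T_H \)-fixed point of \( Y \), this forces \( G \cdot 0_Y = \set{0_Y} \). In particular, the maximal ideal \( I \subset \Cbb[Y] \) of the vertex is stable under \( \wt{G} \).

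Next, by Sumihiro's theorem applied to the normal affine \( \wt{G} \)-variety \( Y \), there exists a finite-dimensional rational \( \wt{G} \)-module \( V \) and a \( \wt{G} \)-equivariant closed embedding \( \iota : Y \hookrightarrow V \). More precisely, one chooses a finite-dimensional \( \wt{G} \)-stable subspace of \( \Cbb[Y] \) generating the algebra; picking this subspace inside \( I \) (using that \( I \) is \( \wt{G} \)-stable and generates \( \Cbb[Y] \) together with the constants) guarantees that every coordinate of \( \iota \) vanishes at \( 0_Y \), so that \( \iota(0_Y) = 0 \).

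It then remains to refine the basis of \( V = \Cbb^N \) to realize the stated matrix form. Decompose \( V = \bigoplus_\chi V_\chi \) into \( T_H \)-weight spaces; since \( T_H \) commutes with \( G \), each \( V_\chi \) is \( G \)-stable. Let \( K_H \subset T_H \) be a maximal compact subgroup. Averaging an arbitrary Hermitian form over the compact group \( K \times K_H \) yields an inner product which is \( K \)-invariant and for which distinct weight spaces are orthogonal, as \( K_H \) acts on \( V_\chi \) through a single unitary character. Concatenating orthonormal bases of the \( V_\chi \) then produces a basis in which \( K \) sits inside \( U_N \), so that \( G = K \cdot \exp(i\kfrak) \) sits inside \( GL_N(\Cbb) \), and in which \( T_H \) acts by diagonal matrices; this gives the desired embedding.

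The serious input is Sumihiro's theorem itself; the rest is routine representation-theoretic bookkeeping, and the main subtle point is checking that a single inner product can simultaneously make \( K \) unitary and diagonalize \( T_H \), which is why the averaging is performed over the full maximal compact subgroup \( K \times K_H \) of \( \wt{G} \).
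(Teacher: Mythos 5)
Your argument is correct and follows the same route the paper intends: the paper gives no written proof and simply cites Sumihiro, and your writeup is the standard elaboration of that citation (Sumihiro's equivariant embedding for the affine \( G \times T_H \)-variety \( Y \), with the generating subspace chosen inside the \( \wt{G} \)-stable maximal ideal of the vertex, followed by averaging a Hermitian form over \( K \times K_H \) to make \( K \) unitary and \( T_H \) diagonal). The preliminary observation that the vertex is \( G \)-fixed is exactly the argument the paper uses in the proof of Theorem \ref{spherical_cone_criterion}, so there is nothing to object to.
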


\textbf{Notation}
In what follows, we will denote the neutral component of \( N_G(H)/H \) by \( T_H \).

\begin{thm} \label{spherical_cone_criterion}
Let \( Y \) be a spherical embedding of \( G/H \) with colored cone \( (\Ccal_Y,\Dcal_Y) \). Then the following conditions are equivalent: 
\begin{itemize}
\item[i)] \( Y \) is a spherical cone. 
\item[ii)] The valuation cone \( \Vcal \) contains a line, the cone \( \Ccal_Y \) is of maximal dimension and \( \Dcal_Y = \Dcal \). 
\end{itemize}
\end{thm}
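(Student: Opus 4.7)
The plan is to translate each of the three combinatorial conditions in (ii) into one of three defining features of a spherical cone --- affineness, existence of a non-trivial central torus, and existence of a fixed point --- and then glue them. The proof will rely on three earlier results: the affine embedding criterion (Prop. \ref{affine_embedding_criterion}), the dimension formula for closed orbits (Prop. \ref{dimension_formula}), and the description of $N_G(H)/H$ via the linear part of $\Vcal$ (Prop. \ref{automorphism_group_dimension}). The basic observation used in both directions is that since $T_H \simeq (N_G(H)/H)^0$ centralizes $G$, a $G$-fixed point is automatically $T_H$-fixed, and conversely the full $G$-orbit of any $T_H$-fixed point is pointwise fixed by $T_H$.

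For (i) $\Rightarrow$ (ii), I would first note that $Y$ is affine, hence simple by Prop. \ref{affine_embedding_criterion}, with unique closed $G$-orbit $Z$. Let $T' \subset T_H$ be the non-trivial torus from the definition and $p \in Y$ its unique fixed point. By $G$-equivariance of the $T'$-action, $g \cdot p$ is $T'$-fixed for every $g \in G$, so uniqueness forces $G.p = \set{p}$; hence $p$ is $G$-fixed and coincides with $Z$. Substituting $\dim Z = 0$ into Prop. \ref{dimension_formula} gives $\dim \Ccal_Y = \rank(Y) + \dim G/P_{\Dcal_Y}$; since $\dim \Ccal_Y \leq \rank(Y)$ and the right-hand terms are nonnegative, I conclude $\dim \Ccal_Y = \rank(Y)$ and $P_{\Dcal_Y} = G$. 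The latter forces $\Dcal_Y = \Dcal$: any color is $B$-stable but not $G$-stable, so each $P_D$ ($D \in \Dcal$) is a proper parabolic, and $\cap_{D \in \Dcal_Y^c} P_D = G$ can only hold if $\Dcal_Y^c = \varnothing$. Finally, non-triviality of $T'$ yields $T_H \neq \set{e}$, and Prop. \ref{automorphism_group_dimension} gives $\dim \text{lin}\, \Vcal > 0$, so $\Vcal$ contains a line.

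For the converse (ii) $\Rightarrow$ (i), I would first verify affineness: with $\Dcal_Y^c = \varnothing$, the linear form $\chi = 0 \in \Mcal$ vacuously meets the conditions of Prop. \ref{affine_embedding_criterion}, so $Y$ is affine and simple. Next, plugging $\dim \Ccal_Y = \rank(Y)$ and $P_{\Dcal_Y} = P_{\Dcal} = G$ into Prop. \ref{dimension_formula} yields a $G$-fixed point $p \in Y$. Because $\Vcal$ contains a line, Prop. \ref{automorphism_group_dimension} forces $\dim T_H > 0$; by the centralizer observation $T_H$ fixes $p$, and $Y$ satisfies the definition of a spherical cone.

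The only delicate point is the uniqueness assertion embedded in the definition of a spherical cone --- the ``necessarily unique'' fixed point --- which I use crucially in (i) $\Rightarrow$ (ii) to upgrade a mere $T'$-fixed point to a $G$-fixed point. Without uniqueness, the $T'$-fixed locus could a priori be higher-dimensional and the argument would only show that $G.p$ sits inside it. The remaining subtlety is the borderline use of the affine criterion with $\chi = 0$, which is legitimate precisely when $\Dcal_Y = \Dcal$; this is exactly how that condition ends up being responsible for affineness in the reverse direction.
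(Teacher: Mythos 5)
Your proof is correct and follows essentially the same route as the paper: the same three ingredients (the affineness criterion of Prop.~\ref{affine_embedding_criterion}, the dimension formula of Prop.~\ref{dimension_formula}, and the identification of \( \dim N_G(H)/H \) with \( \dim \text{lin}\,\Vcal \) from Prop.~\ref{automorphism_group_dimension}) are assembled in the same order, including the use of the ``necessarily unique'' fixed point to upgrade \( T' \)-fixedness to \( G \)-fixedness. The one small divergence is the step \( \Dcal_Y = \Dcal \) in (i) \( \Rightarrow \) (ii): you read \( P_{\Dcal_Y} = G \) off the dimension formula and note that each \( P_D \) is a proper parabolic (since no color of the homogeneous space \( G/H \) is \( G \)-stable), whereas the paper instead reuses the affineness criterion (the separating character \( \chi \) must lie in \( \Ccal_Y^{\perp} = \set{0} \), which is incompatible with \( \chi|_{\rho(\Dcal_Y^c)} > 0 \) unless \( \Dcal_Y^c = \varnothing \)); both arguments are valid.
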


\begin{rmk}
Since \( \dim \text{lin} \Vcal \geq 1 \), the rank of a conical embedding is always positive. 
\end{rmk}

\begin{proof}
\( i) \Rightarrow ii) \).  

\textit{a) The valuation cone contains a line.}

If \( Y \) is a cone, then it admits a unique fixed point \( \set{x} \) under the action of \( N_G(H)/H \), so by \ref{automorphism_group_dimension} , \( \dim N_G(H)/H = \dim \text{lin} \Vcal(G/H) \geq 1 \). It follows that \( \Vcal \) contains a line. 

\textit{b) \( \Ccal_Y \) is of maximal dimension.}

The unique fixed point \( \set{x} \) is also a fixed point of \( G \). Indeed, for all \( t \in N_G(H)/H \) and \( g \in G \), 
\[ t.g.x = g.t.x = g.x, \]
hence  \( g.x = x \) by uniqueness of the fixed point. The closed orbit \( \set{x} \) of \( G \) is of dimension \( 0 \), hence by \ref{dimension_formula}, 
\[ 0 = \rank (Y) - \dim \Ccal_Y + \dim G/ P_{\Dcal_Y} \]
This combined with the dimensional equality \( \dim \Ccal_Y + \dim \Ccal_Y^{\perp} = \rank(Y) \) implies that \( \dim \Ccal_Y^{\perp} = \dim G/P_{\Dcal_Y} = 0 \), hence \( \dim \Ccal_Y = \rank(Y) \).    

\textit{c) The colors of \( Y \) are \(\Dcal_Y = \Dcal\) }

Since \( Y \) is affine, by \ref{affine_embedding_criterion}, there exists \( \chi \in \Ccal_Y^{\perp} = \set{0} \) such that \( \chi|_{\Vcal(G/H)} \geq 0 \) and \( \chi|_{\rho(\Dcal_Y)^c} < 0 \). These last conditions are simultaneously possible only if \( \rho(\Dcal_Y^c) = \varnothing \), or equivalently \( \Dcal = \Dcal_Y \).

\( ii) \Rightarrow i) \). 

By assumption, we have \( \Ccal_Y^{\perp} = \set{0} \) and \( \Dcal = \Dcal_Y \), hence the affineness criterion \ref{affine_embedding_criterion} is automatically satisfied for \( Y \). It follows that \( Y \) is a simple embedding, hence contains a unique closed orbit \( Z \). Since \( P_{\Dcal_Y} = P_{\Dcal} = G \), by the dimension formula: 
\begin{align*}
\dim Z &= \rank(Y) - \dim \Ccal_Y + \dim G/P_{\Dcal_Y} \\
&= \rank(Y) - \dim \Ccal_Y = 0 
\end{align*} 	
It follows that \( Z = \set{x} \) is the unique fixed point of \( Y \) under the action of \( G \). Again by assumption, the linear part of \( \Vcal \) is non-empty, hence \( N_G(H)/H \) contains a torus which commutes with \( G \), so fixes \( \set{x} \).  
\end{proof}

\subsection{Proof of Theorem \ref{main_theorem_classification}}
In the proof of our classification theorem \ref{spherical_cone_criterion}, we have shown that a spherical cone has a unique fixed point under the action of \( G \). In fact, the converse is also true. 

\begin{prop} \label{spherical_cone_g_fixed_point}
Let \( Y \) be an affine spherical \( G\)-variety. Then \( Y\) is a cone if and only if \( Y \neq \set{0_Y}\) and contains a fixed point under the action of \( G\). 
\end{prop}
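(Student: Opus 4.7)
The forward direction is exactly step (b) of the proof of Theorem \ref{spherical_cone_criterion}, so the plan is to establish the converse: assuming \( Y \) is affine spherical, contains a \( G \)-fixed point \( 0_Y \), and \( Y \neq \{0_Y\} \), I would construct a non-trivial one-parameter subgroup of \( \Aut_G^0(Y) \simeq T_H \) fixing \( 0_Y \), which by definition makes \( Y \) a spherical cone.

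First I would observe that by Proposition \ref{affine_embedding_criterion}, \( Y \) is simple, so its (necessarily unique) closed \( G \)-orbit must be \( \{0_Y\} \). Sphericity guarantees the multiplicity-free isotypic decomposition \( \Cbb[Y] = \bigoplus_{\lambda \in \Lambda} V_\lambda \), indexed by a finitely generated sub-semigroup \( \Lambda \subset \Mcal \), where each \( V_\lambda \) is an irreducible \( G \)-module containing a \( B \)-eigenvector \( f_\lambda \) of weight \( \lambda \). Since \( 0_Y \) is \( G \)-fixed, combining \( (b \cdot f_\lambda)(0_Y) = f_\lambda(0_Y) \) with \( b \cdot f_\lambda = \lambda(b) f_\lambda \) forces \( f_\lambda(0_Y) = 0 \) for every \( \lambda \neq 0 \).

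The crucial step will be to prove that \( \Rbb_{\geq 0} \Lambda \subset \Mcal_\Rbb \) is strongly convex. Suppose otherwise: there would exist a relation \( \sum a_i \lambda_i = 0 \) with \( a_i \in \Qbb_{>0} \) and \( \lambda_i \in \Lambda \setminus \{0\} \); after clearing denominators, the product \( F = \prod f_{\lambda_i}^{n_i} \in \Cbb[Y] \) would be a \( B \)-semi-invariant of weight zero, hence an element of \( \Cbb[Y]^B = \Cbb \) by Rosenlicht's theorem. But \( F \) vanishes at \( 0_Y \) while being a product of nonzero elements of the domain \( \Cbb[Y] \), a contradiction. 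Because \( Y \neq \{0_Y\} \), the semigroup \( \Lambda \) contains a nonzero element, so the dual cone \( (\Rbb_{\geq 0}\Lambda)^\vee \subset \Ncal_\Rbb \) has nonempty interior, and I would select a primitive element \( \phi \in \Ncal \) satisfying \( \phi(\lambda) > 0 \) for every \( \lambda \in \Lambda \setminus \{0\} \).

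Such \( \phi \) induces a non-trivial \( \Cbb^* \)-action on \( \Cbb[Y] \) by letting \( t \in \Cbb^* \) act as the scalar \( t^{\phi(\lambda)} \) on each \( V_\lambda \); this action preserves the ring structure because \( V_\lambda V_\mu \subset V_{\lambda + \mu} \), commutes with \( G \) because each \( V_\lambda \) is \( G \)-stable and the action on it is scalar, and its fixed locus is exactly the common zero set of \( \bigoplus_{\lambda \neq 0} V_\lambda \), which equals \( \{0_Y\} \). The resulting \( \Cbb^* \) is therefore a non-trivial torus in \( \Aut_G^0(Y) \) fixing \( 0_Y \), completing the argument. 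The main technical hurdle is the strong-convexity step, which hinges on the delicate interaction between the Rosenlicht identity \( \Cbb[Y]^B = \Cbb \) and the vanishing of \( f_\lambda \) at the \( G \)-fixed point.
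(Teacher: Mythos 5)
Your forward direction is correctly reduced to step (b) of the proof of Theorem \ref{spherical_cone_criterion}, and the first two thirds of your converse are sound: the vanishing of the $B$-eigenvectors $f_{\lambda}$, $\lambda \neq 0$, at the $G$-fixed point, and the resulting strong convexity of $\Rbb_{\geq 0}\Lambda$ via Rosenlicht and integrality of $\Cbb[Y]$, are both correct. The gap is in the last step, where you let $t \in \Cbb^{*}$ act as the scalar $t^{\phi(\lambda)}$ on $V_{\lambda}$ and justify multiplicativity by the inclusion $V_{\lambda} V_{\mu} \subset V_{\lambda + \mu}$. That inclusion fails for general spherical varieties: one only has $V_{\lambda}V_{\mu} \subset \bigoplus_{\nu} V_{\nu}$, where $\nu$ runs over weights of the form $\lambda + \mu$ minus a nonnegative integer combination of (spherical) roots, and these ``tail'' terms are genuinely present unless $G/H$ is horospherical. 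So your map is a linear automorphism of $\Cbb[Y]$ but not a ring automorphism unless $\phi$ annihilates every tail direction, i.e.\ unless $\phi$ lies in the \emph{linear part of the valuation cone} $\Vcal$ --- and a generic interior point of the dual of the weight cone will not. The paper's own example of the non-horospherical symmetric cone over $(SL_2 \times \Cbb^{*})/(N \times \set{1})$ shows the worry is not vacuous: there the admissible one-parameter groups form the one-dimensional linear part of $\Vcal$, a proper subspace of $\Ncal_{\Rbb}$, while the interior of the dual weight cone is two-dimensional; choosing $\phi$ outside that line does not define an automorphism of $Y$.

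Repairing this is exactly where the substance of the proposition lies: one must produce a \emph{nonzero} element of $\text{lin}\,\Vcal$ that is moreover positive on $\Lambda \setminus \set{0}$, and strong convexity of the weight cone gives no access to $\text{lin}\,\Vcal$ by itself. The paper's proof goes through Knop's theorem on proper sources (Prop.\ \ref{prop_proper_source_torus_action}): since $\Cbb(Y)^B = \Cbb$, every $G$-invariant valuation is central, the $G$-fixed point is a proper source, and Knop's degeneration argument then produces a nontrivial positively graded $\Cbb^{*} \subset \Aut_G(Y)$, whence $\dim \text{lin}\,\Vcal \geq 1$ and the cone criterion applies. If you want to keep your isotypic setup, the honest substitute for your grading is the ring filtration $\bigoplus_{\phi(\lambda) \geq c} V_{\lambda}$ for $\phi \in \Vcal$, but this only yields a degeneration of $Y$, not an action on $Y$; upgrading it to an action amounts to reproving Knop's result.
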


Before giving the proof, we will need some preliminaries. Let \( X \) be a normal \( G \)-variety and \( \nu \) a valuation on \( \Cbb(X)^{*} \). The \textit{center} of \( \nu \) is a closed subvariety \( Z \) of \( X \) such that \( \Ocal_{X,Z} \subset \Ocal_{\nu} = \set{\nu \geq 0} \) and \( \mfrak_{X,Z} \subset \mfrak_{\nu}= \set{\nu > 0} \). Here \( \Ocal_{X,Z} = \set{f/g, f,g \in \Cbb[X], g|_Z \neq 0} \) is a local ring and \( \mfrak_{X,Z} \) is the unique maximal ideal of \( \Ocal_{X,Z} \).  

\begin{defn}
Let \( B \) be a Borel subgroup of \( G \). A \( G \)-invariant discrete  \( \Qbb \)-valuation of \( \Cbb(X) \) is said to be \textit{central} if it vanishes on \( \Cbb(X)^B \backslash \set{0} \). A proper source of \( X \) is a \( G \)-stable proper subvariety of \( X \) which is the center of a central valuation.
\end{defn} 

Given a normal \( G \)-variety \( X \), every closed \( G \)-stable subvariety \( Z \) is the center of some \( G \)-invariant valuation of \( \Cbb(X) \). Indeed let \( (\wt{X}, \pi) \) be the blow-up of \( X \) along \( Z \), then \( \wt{X} \) is still a normal variety. Now let \( \wt{Z} \) be an irreducible component of \( \pi^{-1}(Z) \). By normality of \( \wt{X} \), there exists a natural valuation \( \nu_{\wt{Z}} \) of \( \Cbb(\wt{X}) = \Cbb(X) \) associated to \( \wt{Z} \). The center of \( \nu_{\wt{Z}} \) in \( X \) is then \( Z \).

\begin{prop} \cite[Theorem 8.2]{Kno93}, \cite[Corollary 7.9]{Kno94} \label{prop_proper_source_torus_action}
Let \( X \) be a normal affine \( G \)-variety containing a proper source. Then there exists a non-trivial positive grading of \( \Cbb[X] \) induced by the action of a subtorus \( \Cbb^{*} \subset \Aut_G(X) \), where \( \Aut_G(X) \) is the group of automorphisms of \( X \) commuting with \( G \). 
\end{prop}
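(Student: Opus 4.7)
My plan is as follows. One direction is already essentially contained in the proof of Theorem \ref{spherical_cone_criterion} (part (i) \(\Rightarrow\) (ii), step b), where it was shown that the unique torus-fixed vertex of a spherical cone is automatically \(G\)-fixed, via the commutation of \(G\) with the acting torus \(T_H \subset \Aut_G^0(Y)\). Moreover, \(Y \neq \set{0_Y}\) because a single point cannot support a non-trivial torus action. So I focus on the converse implication, which is the substantive content of the statement.

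For the converse, I assume \(Y\) is an affine spherical \(G\)-variety, \(Y \neq \set{x}\), with \(x \in Y\) a \(G\)-fixed point, and I want to produce a non-trivial torus in \(\Aut_G^0(Y)\) fixing \(x\). Being affine and spherical, \(Y\) is simple by Proposition \ref{affine_embedding_criterion}, so it has a unique closed \(G\)-orbit, which must coincide with \(\set{x}\); in particular, \(x\) is the unique \(G\)-fixed point of \(Y\). Since by hypothesis \(Y \neq \set{x}\), the subvariety \(\set{x}\) is a proper \(G\)-stable subvariety of \(Y\).

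The key step is to identify \(\set{x}\) as a proper source. I would consider the \(G\)-equivariant blow-up \(\pi : \wt{Y} \to Y\) of \(Y\) along \(\set{x}\); the exceptional fiber \(\pi^{-1}(x)\) is \(G\)-stable, and by connectedness of \(G\) each irreducible component \(\wt{Z}\) is individually \(G\)-stable. The divisorial valuation \(\nu_{\wt{Z}}\) on \(\Cbb(Y) = \Cbb(\wt{Y})\) associated to \(\wt{Z}\) is then \(G\)-invariant with center exactly \(\set{x}\) on \(Y\), as explained in the preamble to Proposition \ref{prop_proper_source_torus_action}. Sphericality of \(Y\) forces \(\Cbb(Y)^B = \Cbb\) by Rosenlicht's theorem; since every valuation vanishes on the nonzero constants \(\Cbb^{*}\), the valuation \(\nu_{\wt{Z}}\) is automatically central, so \(\set{x}\) qualifies as a proper source.

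Proposition \ref{prop_proper_source_torus_action} then produces a non-trivial subtorus \(\Cbb^{*} \subset \Aut_G(Y)\) acting with a positive grading on \(\Cbb[Y]\). Because this torus commutes with \(G\), it stabilizes the \(G\)-fixed locus of \(Y\), which we have shown reduces to \(\set{x}\); hence \(\Cbb^{*}\) fixes \(x\), and since \(T_H = (N_G(H)/H)^0 \simeq \Aut_G^0(Y)\) contains this one-parameter subgroup, \(Y\) is a spherical cone with vertex \(x\). The step I view as the main subtlety is verifying that the \(G\)-fixed point genuinely arises as the center of a \emph{central} \(G\)-invariant valuation; this is where the combination of equivariant blow-up (yielding \(G\)-invariance of \(\nu_{\wt{Z}}\)) and sphericality (yielding centrality for free through Rosenlicht's theorem) is indispensable, since without either ingredient Proposition \ref{prop_proper_source_torus_action} could not be invoked.
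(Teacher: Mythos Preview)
You have misidentified the statement to be proved. Proposition \ref{prop_proper_source_torus_action} asserts that \emph{any} normal affine $G$-variety containing a proper source carries a non-trivial $\Cbb^{*}$-action commuting with $G$; it is quoted from Knop \cite{Kno93,Kno94} and the paper offers no proof of it. Your write-up instead argues the equivalence in Proposition \ref{spherical_cone_g_fixed_point} (an affine spherical $G$-variety is a cone iff it has a $G$-fixed point and is not a point), and at the decisive moment you \emph{invoke} Proposition \ref{prop_proper_source_torus_action} as a black box. That is circular relative to the stated goal: the very result you are asked to establish is used as an input.

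For the record, what you have written is essentially the paper's own derivation of Proposition \ref{spherical_cone_g_fixed_point} from Proposition \ref{prop_proper_source_torus_action}: the paper first notes (in the Corollary following Proposition \ref{prop_proper_source_torus_action}) that on a spherical affine variety every proper closed $G$-stable subvariety is a proper source because $\Cbb(X)^B=\Cbb$, and then concludes exactly as you do. So your argument is correct \emph{for that other proposition} and matches the paper's approach there. But a proof of Proposition \ref{prop_proper_source_torus_action} itself requires Knop's machinery (the horospherical contraction and the structure of the cotangent bundle/little Weyl group), none of which appears in your proposal.
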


\begin{cor}
If \( X \) is a \( G \)-spherical affine variety, containing a proper and closed \( G \)-stable subvariety then  \( \Aut_G(X) \) contains a non-trivial subtorus. 
\end{cor}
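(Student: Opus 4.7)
The plan is to reduce the corollary to Proposition \ref{prop_proper_source_torus_action} by exhibiting a proper source of \( X \). Let \( Z \) be the given proper closed \( G \)-stable subvariety; I may assume \( Z \) is irreducible, since \( G \) is connected and therefore preserves each irreducible component of any \( G \)-stable closed subvariety.

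I would next construct a \( G \)-invariant valuation of \( \Cbb(X) \) whose center is a proper \( G \)-stable closed subvariety. The construction already sketched earlier in the section does the job: blow up \( X \) along \( Z \) to obtain a normal \( G \)-variety \( \wt{X} \) equipped with a \( G \)-equivariant proper birational morphism \( \pi : \wt{X} \to X \), and pick an irreducible component \( D \) of the exceptional locus \( \pi^{-1}(Z) \). By connectedness of \( G \), the component \( D \) is \( G \)-stable, so the associated divisorial valuation \( \nu_D \) on \( \Cbb(\wt{X}) = \Cbb(X) \) is \( G \)-invariant, and its center \( \pi(D) \) is a proper closed \( G \)-stable subvariety of \( X \).

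The last step is to check that \( \nu_D \) is \emph{central}, i.e., vanishes on \( \Cbb(X)^B \backslash \set{0} \). But \( X \) is spherical, so Rosenlicht's theorem gives \( \Cbb(X)^B = \Cbb \), and any discrete valuation automatically vanishes on nonzero constants. Hence \( \nu_D \) is central, \( \pi(D) \) is a proper source of \( X \), and Proposition \ref{prop_proper_source_torus_action} then yields a nontrivial subtorus inside \( \Aut_G(X) \), which is exactly the conclusion. I do not foresee any real obstacle: the whole point is that on a spherical variety the centrality condition collapses to nothing, so the existence of any proper \( G \)-stable closed subvariety automatically forces the existence of a proper source.
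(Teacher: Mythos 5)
Your proposal is correct and follows essentially the same route as the paper: use Rosenlicht's theorem to see that every $G$-invariant valuation on a spherical variety is central, so any proper closed $G$-stable subvariety (realized as the center of a divisorial valuation on a blow-up, exactly as sketched earlier in the section) is a proper source, and then invoke Proposition \ref{prop_proper_source_torus_action}. No issues.
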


\begin{proof}
Indeed, in the spherical case \( \Cbb(X)^B = \Cbb \), hence every \( G \)-invariant valuation of \( \Cbb(X) \) is central. It follows that every proper closed \( G \)-stable subvariety of \( X \) is a proper source. We conclude by the Prop. \ref{prop_proper_source_torus_action}. 
\end{proof}

We now proceed to the proof of Prop.  \ref{spherical_cone_g_fixed_point}

\begin{proof}[Proof of Prop. \ref{spherical_cone_g_fixed_point}]
If \( Y \) is a sherical cone then \( Y \) contains a fixed point under the action of \( G \) by the proof of the part \( ``i) \Rightarrow ii), (b)" \) in Thm. \ref{spherical_cone_criterion}. Conversely, the fixed point of \( G \) in \( Y \) is a proper subvariety of \( Y \), hence by the previous corollary, \( \Aut_G(Y) \) contains a non-trivial torus. It follows that \(\dim \text{lin} \Vcal =  \dim N_G(H)/H  = \dim \Aut_G(Y) \geq 1 \), hence \( \Vcal \) contains a line.  
\end{proof}

\begin{proof}[Proof of Theorem \ref{main_theorem_classification}] Direct consequence of Prop. \ref{spherical_embeddings_classification} and Prop. \ref{spherical_cone_g_fixed_point}.
\end{proof}

\subsection{Relation with Fano cones}

Recall that a \( T \)-affine cone is a normal affine variety with a good action of \( T \) (i.e. \( T \) acts with a fixed point, contained in the closure of any orbit). A Fano cone is a \( \Qbb\)-Gorenstein \( T \)-affine cone with klt singularities (see \cite{Pas17} for a definition and a survey on singularities of spherical varieties).  

\begin{prop}
Let \( Y \) be a spherical cone. Then \( Y \) is a normal \( T_H \)-affine cone. If moreover \( Y \) is \( \Qbb \)-Gorenstein, then \( Y \) is a Fano cone. 
\end{prop}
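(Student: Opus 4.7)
The proposition has two parts: first that $Y$ is a normal $T_H$-affine cone, and second that this upgrades to a Fano cone under the $\Qbb$-Gorenstein hypothesis. Normality of $Y$ is immediate (spherical varieties are normal by convention), affineness is built into the definition of a spherical cone, and effectiveness of the $T_H$-action follows from the inclusion $T_H \hookrightarrow \Aut_G^0(Y)$ of Prop.~\ref{automorphism_group_description}. Uniqueness of the $T_H$-fixed point follows from the chain $Y^G \subset Y^{T_H} \subset Y^{T'} = \{0_Y\}$, where $T' \subset T_H$ is the subtorus from the definition of a spherical cone: the first inclusion uses that $T_H$ commutes with $G$, the second uses $T' \subset T_H$, and the equality uses the definition together with the fact that $0_Y$ is the unique $G$-fixed point (shown in the proof of Theorem~\ref{spherical_cone_criterion}).

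The substantive step is to show that $\{0_Y\}$ lies in the closure of every $T_H$-orbit. I plan to reduce this to the equality $\Cbb[Y]^{T_H} = \Cbb$ and then apply GIT: since $T_H$ is reductive acting on the affine variety $Y$, every orbit closure contains a unique closed orbit, and the closed orbits are in bijection with the points of $\Spec\,\Cbb[Y]^{T_H}$; if this GIT quotient is a point, then $\{0_Y\}$ (closed since it is a singleton) must be the unique closed orbit, forcing it into the closure of every orbit. To verify $\Cbb[Y]^{T_H} = \Cbb$, decompose $\Cbb[Y] = \bigoplus_{\lambda \in M(Y)} V_\lambda$ as a $G$-module with multiplicity one by sphericity, the sum indexed by the weight monoid $M(Y) = \Ccal_Y^\vee \cap \Mcal$. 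Since $T_H$ commutes with $G$ and each $V_\lambda$ is irreducible, Schur's lemma yields a character $\chi_\lambda$ by which $T_H$ acts on $V_\lambda$; after identifying the cocharacter lattice of $T_H$ with a lattice in $\text{lin}\,\Vcal$ (Prop.~\ref{automorphism_group_dimension}), this character reads $\chi_\lambda = \lambda|_{\text{lin}\,\Vcal}$. Consequently $\Cbb[Y]^{T_H} = \bigoplus_{\lambda \in M(Y) \cap (\text{lin}\,\Vcal)^\perp} V_\lambda$, so the desired equality translates into the combinatorial condition $\Ccal_Y^\vee \cap (\text{lin}\,\Vcal)^\perp = \{0\}$, equivalently $\Ccal_Y + \text{lin}\,\Vcal = \Ncal_{\Rbb}$ by duality.

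I expect this last combinatorial identity to be the principal obstacle. It is trivial in the horospherical case since then $\text{lin}\,\Vcal = \Ncal_{\Rbb}$, and in general the plan is to exploit the hypotheses from Theorem~\ref{spherical_cone_criterion}---strict convexity and maximal dimension of $\Ccal_Y$, the equality $\Dcal_Y = \Dcal$, and the fact that the relative interior of $\Ccal_Y$ meets $\Vcal$---to control how the colors $\rho(\Dcal) \subset \Ccal_Y$ are positioned relative to $\text{lin}\,\Vcal$. Once $Y$ is known to be a normal $T_H$-affine cone, the second assertion follows by invoking the standard fact, surveyed in \cite{Pas17}, that a $\Qbb$-Gorenstein spherical variety has klt singularities; combined with the good $T_H$-action, this is precisely the definition of a Fano cone.
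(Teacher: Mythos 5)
Your treatment of the first part (normality, affineness, effectiveness, and uniqueness of the $T_H$-fixed point via $Y^G \subset Y^{T_H} \subset Y^{T'} = \set{0_Y}$) is fine and matches what the paper dispatches as ``clear by definition.'' The reduction of the good-action property to $\Cbb[Y]^{T_H} = \Cbb$ via GIT is also sound: if the invariant ring is trivial, the quotient is a point, so there is a unique closed $T_H$-orbit, which must be the closed orbit $\set{0_Y}$, and every orbit closure contains it. Likewise the translation of $\Cbb[Y]^{T_H} = \Cbb$ into the combinatorial condition $\Ccal_Y^{\vee} \cap (\text{lin}\,\Vcal)^{\perp} = \set{0}$, i.e.\ $\Ccal_Y + \text{lin}\,\Vcal = \Ncal_{\Rbb}$, is correct given the standard identification of the $T_H$-character of $V_\lambda$ with $\lambda|_{\text{lin}\,\Vcal}$.

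The problem is that this last identity is precisely where the mathematical content lies, and you do not prove it: you state that you ``expect'' it to be the principal obstacle and sketch a ``plan'' to attack it. Everything before it is soft (Schur, GIT formalities, duality of cones), so as written the proof of the good-action claim --- the only non-routine assertion in the proposition --- is missing. It is also not clear that the identity follows easily from the colored-cone axioms: $\Ccal_Y$ is only known to be generated by $\rho(\Dcal)$ and finitely many invariant valuations, with relative interior meeting $\Vcal$, and extracting from this that $\Ccal_Y$ surjects onto $\Ncal_{\Rbb}/\text{lin}\,\Vcal$ requires controlling the images of the colors, which is exactly the part you defer. The paper avoids this entirely: it decomposes $\Cbb[Y]$ into $T_H$-weight spaces and invokes Sjamaar's criterion \cite[1.2, Corollaire]{Bri97} to get strict convexity of the $T_H$-weight cone $\sigma^{\vee}$ of the affine spherical variety $Y$, obtains maximal dimension from the existence of the fixed point (citing \cite{AH06}), and then concludes by the orbit--face correspondence for torus actions on affine varieties \cite[1.2, Remarque]{Bri97}. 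If you want to keep your route, you should either prove $\Ccal_Y + \text{lin}\,\Vcal = \Ncal_{\Rbb}$ in full, or replace that step by the citation to Sjamaar's criterion, which delivers the needed convexity statement without any case analysis on the position of the colors. The final step (klt singularities from \cite{Pas17}, hence Fano cone) coincides with the paper.
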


\begin{proof}
It is clear by definition that \( T_H \) acts effectively and holomorphically on \( Y \) with a unique fixed point \(\set{0_Y}\). 

Next, let us show that \( T_H \) defines a good action. Let \( \Cbb[Y] \) be the ring of regular functions on \( Y \) and \( \Mcal(T_H) \) be the weight lattice of \( T_H \), with dual lattice being \( \Ncal(T_H) \). Under the \textit{right-action} of \( T_H \), we have a decomposition 
\[ \Cbb[Y] = \bigoplus_{\alpha \in \Gamma}  R_{\alpha}, \quad \Gamma := \set{\alpha \in \Mcal(T_H), R_{\alpha} \neq 0} \] 
where \( R_{\alpha} \) is a \( T_H \)-module of \( \Cbb[Y] \) with weight \( \alpha \). The cone \( \sigma^{\vee} \subset \Mcal(T_H)_{\Rbb} \) generated by \( \Gamma \) is strictly convex by Sjammar's criterion \cite[1.2, Corollaire]{Bri97}, and of maximal dimension since \( Y \) has a fixed point under \( T_H \) (see \cite{AH06}). 

It follows from the orbit-faces correspondence for \( T_H \)-affine varieties that \(\set{0_Y}\) correponds to \( \sigma^{\vee}\), and that the closure of any orbit of \( T_H \) in \( Y \) contains the fixed point \cite[1.2, Remarque]{Bri97}. 

Finally, since any \( \Qbb \)-Gorenstein spherical cone has klt singularites (cf. e.g.  \cite[Proposition 5.6]{Pas17} for a short proof), this completes our proof.  
\end{proof}

By duality, the cone \( \sigma = (\sigma^{\vee})^{\vee} \) is also strictly convex, of maximal dimension, and is said to be the \textit{Reeb cone}. In particular, the interior of \( \sigma \) is non-empty and coincides with its relative interior:
\[ \text{Int}(\sigma) = \set{ \xi \in \Ncal(T_H)_{\Rbb}, \sprod{\xi, \alpha} > 0, \forall \alpha \in \Gamma} \] 

\begin{defn}
The set \( \text{Int}(\sigma)\) is called the set of \emph{Reeb vectors} of \( Y \). A couple \( (Y,\xi) \), \( \xi \in \text{Int}(\sigma) \), is said to be a \emph{polarized cone}. 

If \( \xi \in \Ncal(T_H)_{\Qbb} \), then it is called \emph{quasi-regular}, otherwise it is said to be \emph{irregular}. A polarized cone with a quasi-regular (resp. irregular) Reeb vector is called a \emph{quasi-regular} (resp. \emph{irregular}) \emph{cone}. 
\end{defn} 

\begin{rmk}
If \( Y \) is any Fano cone with the Reeb cone defined by the action of \( T \), and \( \xi \) is quasi-regular, then by a result of Kollar \cite[Paragraph 42]{Kol04}, the GIT quotient of \( Y \backslash \set{0_Y} \) by the \( \Cbb^{*} \)-action generated by \( \xi \) can be seen as a polarized variety \( (X,L) \) (more precisely a \textit{log Fano variety}, i.e. \( L = -(K_X + D) \), \( D\) being a Cartier divisor on \( X \) such that \( L \) is ample). The affine cone \( Y \) is then isomorphic to the total space of \( L^{-1} \) with its zero section contracted.
\end{rmk}



\begin{ex} \label{rank_two_horospherical_cone}
Now consider \( G = SL_3 \) and \( H = U^{-} \) as in Ex. \ref{rank_two_horospherical}. Since \( H \) is maximal unipotent, it is a horospherical subgroup of \( G \). 

The embedding defined by the colored cone 
\[\tuple{\Rbb_{\geq 0}(\alpha_1^{\vee}|_{\Mcal}, \alpha_2^{\vee}|_{\Mcal}), \tuple{\alpha_1^{\vee}|_{\Mcal}, \alpha_2^{\vee}|_{\Mcal} }} \]
clearly satisfies the criterion \ref{spherical_cone_criterion}, hence is a conical embedding. The automorphism group \( T_H \) is exactly the maximal torus \( T \) acting on the right.

 We can identify this embedding with the \( 5 \)-dimensional affine quadric 
\[ \mathcal{Q} = \set{ (x_1, x_2, x_3, y_1, y_2, y_3) \in \Cbb^6, \; \sum_{i=1}^3 x_i y_i = 0} \] 
The \( SL_3 \)-orbits of \( \mathcal{Q} \), which correspond to the faces of the colored cone, are the open dense orbit \( SL_3/U^{-} \), two copies of \( \Cbb^3 \), and the unique fixed point \( \set{0} \). In fact, \( \mathcal{Q} \) is the affine cone over the complex grassmannian \( G(2,4) \). In particular, \( \mathcal{Q} \) is a cone with a unique isolated singularity. 

We also have that \( \mathcal{Q} \) is Gorenstein, since the element \( \beta = 2 \omega_1 + 2 \omega_2 \) clearly satisfies \( \sprod{\beta, \alpha_i^{\vee}|_{\Mcal}} = 2, i \in \set{1,2} \). 
\end{ex}
\begin{figure}[H]
\begin{tikzpicture}
\pgfmathsetmacro\ax{2}
\pgfmathsetmacro\ay{0}
\pgfmathsetmacro\bx{2 * cos(90)}
\pgfmathsetmacro\by{2 * sin(90)}
\pgfmathsetmacro\lax{2*\ax/3 + \bx/3}
\pgfmathsetmacro\lay{2*\ay/3 + \by/3}
\pgfmathsetmacro\lbx{\ax/3 + 2*\bx/3}
\pgfmathsetmacro\lby{\ay/3 + 2*\by/3}

\tikzstyle{couleur_pl}=[circle,draw=black!50,fill=blue!20,thick, inner sep = 0pt, minimum size = 2mm]

\fill [black!20] (0,0)--(\ax,\ay)--(\bx,\by) -- cycle;
\node at (\ax, \ay) [couleur_pl] {};
\node at (\bx, \by) [couleur_pl] {};
\draw[->, ultra thick] (0,0) -- (\ax,\ay) node[below right] {\( \rho(D_1) \)};
\draw[->, ultra thick] (0,0) -- (\bx, \by) node[above left] {\(\rho(D_2) \)};

\draw (-2,0)--(2,0);
\draw (0,-2)--(0,2);



\end{tikzpicture}
\caption{Colored cone of a horospherical conical embedding.}
\end{figure}
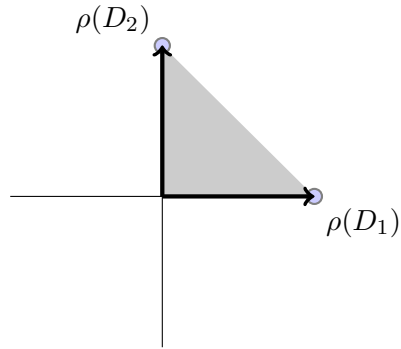

Horospherical spaces of rank \( 0 \) are flag varieties, hence never embed into spherical cones. Below is an example of a rank-one horospherical space not having any conical embeddings. This behavior is completely different from toric spaces, which always admit conical embeddings. 

\begin{ex}
Again we consider \( G = SL_3 \). Fix a root system with simple roots \((\alpha_1,\alpha_2) \) and fundamental weights \( (\omega_1,\omega_2) \) (see Ex. \ref{rank_two_horospherical} for their expressions). Let \( H = \ker(\omega_1 - \omega_2) \). In particular, 
\[ H = \set{(b_{ij}) \in SL_3, b_{22} = 1} \] 
Clearly, \( H \supset U \), so it is a horospherical subgroup. 
The weight and coweight lattices are \( \Mcal = \Zbb \omega_1 \) and \( \Ncal = \Zbb \alpha_1^{\vee} \). 
The images of colors are easily determined to be \( -\alpha_1^{\vee}|_{\Mcal}, \alpha_1^{\vee}|_{\Mcal} \). From these data, one can see that the only embedding with all colors is determined by the colored fan 
\[\set{ \tuple{\Rbb_{\geq 0} (\alpha_1^{\vee}), \alpha_1^{\vee}}, \tuple{\Rbb_{\geq 0} (-\alpha_1^{\vee}), -\alpha_1^{\vee}} }, \] 
This is a non-simple embedding, hence non-conical.
\end{ex}

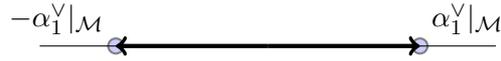
\begin{figure}[H]
\begin{tikzpicture}
\pgfmathsetmacro\ax{2}
\pgfmathsetmacro\ay{0}
\pgfmathsetmacro\bx{2 * cos(90)}
\pgfmathsetmacro\by{2 * sin(90)}
\pgfmathsetmacro\lax{2*\ax/3 + \bx/3}
\pgfmathsetmacro\lay{2*\ay/3 + \by/3}
\pgfmathsetmacro\lbx{\ax/3 + 2*\bx/3}
\pgfmathsetmacro\lby{\ay/3 + 2*\by/3}

\tikzstyle{couleur_pl}=[circle,draw=black!50,fill=blue!20,thick, inner sep = 0pt, minimum size = 2mm]


\node at (\ax, \ay) [couleur_pl] {};
\node at (-\ax, \ay) [couleur_pl] {};
\draw[->, ultra thick] (0,0) -- (\ax,\ay) node[above right] {\( \alpha_1^{\vee}|_{\Mcal} \)};
\draw[->, ultra thick] (0,0) -- (-\ax, \ay) node[above left] {\(- \alpha_1^{\vee}|_{\Mcal} \)};

\draw (-3,0)--(3,0);


\end{tikzpicture}
\caption{Horospherical space without conical embeddings.}
\end{figure}

Not all open orbits of conical embeddings are horospherical, as the following example shows. 

\begin{ex}
Let \( G = SL_2 \times \Cbb^{*} \) and \( H = N \times \set{1} \) where \( N \) is the normalizer of the maximal torus in \( G \). The homogeneous space \( G/H \) is then a reducible symmetric space. Fix a root system of \( SL_2 \) with \( \alpha \) as the unique simple root. Let \( e \) be a primitive character of \( \Cbb^{*} \). The weight lattice is \( \Mcal = \Zbb \gamma \oplus \Zbb e \), where \( \gamma = 2 \alpha \). Let \( (\gamma^{*} = \omega^{\vee}/2, e^{*}) \) be the dual basis, where \( \omega^{\vee} \) is the unique coweight. Then \( \Ncal = \Zbb \gamma^{*} \oplus \Zbb e^{*} \), and there exists a unique color \( D \) whose image in \( \Ncal \) is \( \rho(D) = 2 \gamma^{*} \). 

The valuation cone of \( G/H \) is a half-space defined by the hyperplane 
\[ \set{(x,y) \in \Mcal_{\Rbb} , x \leq 0}, \] 
Consider the embedding defined by \( \tuple{\Rbb_{\geq 0} \set{ 2 \gamma^{*}, e^{*} - 2 \gamma^{*}}, 2 \gamma^{*}}  \). This colored cone is clearly of maximal dimension and contains the unique color, so defines a conical embedding of \( G/H \), which is symmetric and non-horospherical (since the valuation cone is not a vector space). 
\end{ex} 

\begin{figure}[H]
\begin{tikzpicture}
\pgfmathsetmacro\ax{2}
\pgfmathsetmacro\ay{0}
\pgfmathsetmacro\bx{2 * cos(90)}
\pgfmathsetmacro\by{2 * sin(90)}
\pgfmathsetmacro\lax{2*\ax/3 + \bx/3}
\pgfmathsetmacro\lay{2*\ay/3 + \by/3}
\pgfmathsetmacro\lbx{\ax/3 + 2*\bx/3}
\pgfmathsetmacro\lby{\ay/3 + 2*\by/3}

\tikzstyle{couleur_pl}=[circle,draw=black!50,fill=blue!20,thick, inner sep = 0pt, minimum size = 2mm]

\fill [blue!20] (0, -\by) -- (-2, -\by) -- (-2,\by) -- (0, \by) -- cycle;
\node[] at (-1,-0.5) {\( \Vcal \) }; 

\fill [black!20] (0,0)--(\ax,\ay)--(\bx - \ax,\by - \ay) -- cycle;
\node at (\ax, \ay) [couleur_pl] {};
\draw[->, ultra thick] (0,0) -- (\ax,\ay) node[below right] {\( \rho(D) = 2 \gamma^{*} \)};
\draw[->, ultra thick] (0,0) -- (\bx - \ax, \by - \ay) node[above right] {\( . \)};

\draw[dashed] (0,0)--(0,-2);
\node[] at (1,-1) {\(\text{Reeb cone} \)};

\end{tikzpicture}
\caption{Conical embedding of a symmetric space. The dashed half-line represents the Reeb cone}
\end{figure}
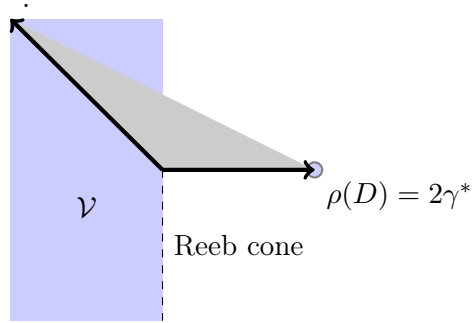

While many familiar spherical spaces admit conical embeddings, there exists a large class which does not, as shown by the following example. 

\begin{ex}
A spherical space \( G/H \) is said to be \textit{sober} if \( H \) is of finite index in \( N_G(H) \). This is equivalent to the fact that \( \Vcal(G/H) \) is a strictly convex cone (cf. Prop. \ref{automorphism_group_dimension}), hence does not contain any line. It follows that sober spaces never admit any conical embedding. In particular, semisimple symmetric spaces do not embed equivariantly into any symmetric cone, since their valuation cone is the restricted negative Weyl chamber, which is strictly convex. 


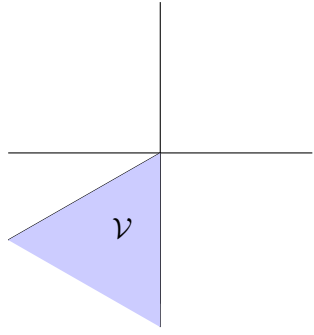
\begin{figure}[H]
\begin{tikzpicture}
\pgfmathsetmacro\ax{2}
\pgfmathsetmacro\ay{0}
\pgfmathsetmacro\bx{2 * cos(120)}
\pgfmathsetmacro\by{2 * sin(120)}
\pgfmathsetmacro\lax{2*\ax/3 + \bx/3}
\pgfmathsetmacro\lay{2*\ay/3 + \by/3}
\pgfmathsetmacro\lbx{\ax/3 + 2*\bx/3}
\pgfmathsetmacro\lby{\ay/3 + 2*\by/3}

\tikzstyle{couleur_pl}=[circle,draw=black!50,fill=blue!20,thick, inner sep = 0pt, minimum size = 2mm]


\draw (-2,0)--(2,0);
\draw (0,-2)--(0,2);

\draw (0,0) -- (-2*\lbx, -2*\lby);
\draw (0,0) -- (-2*\lax, -2*\lay);
\fill [blue!20] (0,0) -- (-2*\lax, -2*\lay) -- (-2*\lbx, -2*\lby) -- cycle;
\node[] at (-0.5,-1) {\( \Vcal \) }; 


\end{tikzpicture}
\caption{Valuation cone of a semisimple symmetric space.}
\end{figure}
\end{ex}

\section{Conical Calabi-Yau metrics on horospherical cones} \label{CY}

\subsection{Preliminaries}

\subsubsection{Structure of horospherical cones}

Let us recall some basic properties of horospherical varieties. For more information, the reader might consult Pasquier's thesis \cite{Pas08}.

\begin{defn}
A horospherical cone is a conical embedding of a horospherical space. 
\end{defn}

Consider a conical embedding \( G/H \subset Y \) of a horospherical space  of rank \( r > 0 \) with colored cone \( (\Ccal_Y, \Dcal_Y) \). 
Let \( K \) be the maximal compact subgroup of \( G\) and \( T \) be a maximal torus of \( G \) such that \( T \cap K \) is the maximal compact subtorus of \( T \).

Denote by \( (.,.) \) the Weyl-invariant scalar product on \( \tfrak \), whose restriction to the semisimple part \( \tfrak \cap [\gfrak,\gfrak] \) coincides with the Killing form. For all \( m_1, m_2 \in \tfrak^{*} \), we denote by \( t_{m_2} \) the unique element in \( \tfrak \) such that 
\[(m_1,m_2) = \sprod{m_1, t_{m_2}} \]
Recall that the connected automorphism group compatible with \( G \) is isomorphic to the torus \( T_H := (N_G(H)/H)^0 \). In the horospherical case, \( N_G(H) = P \) (see Prop. \ref{horospherical_space_characterization_proposition}), so \( T_H = P/H \). In particular, 
\( P \) is the right-stabilizer of the open Borel-orbit. After choosing an adapted Lévi subgroup of \( Q \), (cf. \cite[2.4, Proposition 2]{Bri97}), we have \( Q/H = T/ T \cap H = T_H \). We can thus identify \( T_H \) with \( T/ T \cap H \) as a group, and the weight lattice \( \Mcal(G/H) \) with the weight lattice \( \Mcal(T_H) \) of the torus \( T_H \).  Moreover,  \( r = \text{rank} (G/H) = \dim T_H \).

\begin{rmk} We emphasize here the fact that \( T_H \) acts on \( G/H \) in two different ways: on the left and on the right. In the rest of this article, unless stated otherwise, by an action of \( T_H \), we mean its action on the right, i.e. 
\[ p. (gH) = gp^{-1} H, \; \forall g \in G, p \in T_H \]
By a \( G \times T_H \) action, we mean the combined action of \( G \) on the left and \( T_H \) on the right.
\end{rmk}

Let \( \tfrak_H \) be the Lie algebra of \( T_H \). The compact and non-compact part of \( \tfrak_H \) are denoted by \( \tfrak_{H,c} \) and \( \tfrak_{H,nc} \). They are respectively the Lie algebras of the compact real torus \( T_{H,c} \simeq (\Sbb^1)^r \) and \( T_{H,nc} \simeq  (\Rbb_{>0})^r \) in the decomposition \( T_H = T_{H,c} \times T_{H,nc} \). Moreover, we can choose the maximal compact subgroup \( K \) such that \( T_{H,c} = T_H \cap K \). 

Recall that there exists a natural isomorphism between 
\( \tfrak_{H,c} \) and \( \Ncal(T_H)_{\Rbb} \). 
We identify an element \( \xi \in \tfrak_{H,c} \) in the Reeb cone with the vector field it generates (called the Reeb vector field). Let \( J \) be the complex structure on \( Y_{\text{reg}} \). The vector field generated by \( - J \xi \in \tfrak_{H, nc} \) defines a radial right-action of \( \Rbb^{*}_{+} \) on \( Y \).

\begin{rmk}
The reader should be aware that for a horospherical cone, the Reeb cone is exactly the opposite of the cone \( \Ccal_Y \). Indeed, if we look at the decomposition of \( \Cbb[Y] \) into \( T_H \)-modules with respect to the left-action of \( T_H \), then the cone generated by the weights is exactly \( \Ccal_Y^{\vee} \). However, with respect to the right-action, it is \( - \Ccal^{\vee}_Y \). Below is the Reeb cone for the conical embedding in Ex. \ref{rank_two_horospherical_cone}. 
\end{rmk}

\begin{figure}[H]
\begin{tikzpicture}
\pgfmathsetmacro\ax{2}
\pgfmathsetmacro\ay{0}
\pgfmathsetmacro\bx{2 * cos(90)}
\pgfmathsetmacro\by{2 * sin(90)}
\pgfmathsetmacro\lax{2*\ax/3 + \bx/3}
\pgfmathsetmacro\lay{2*\ay/3 + \by/3}
\pgfmathsetmacro\lbx{\ax/3 + 2*\bx/3}
\pgfmathsetmacro\lby{\ay/3 + 2*\by/3}

\tikzstyle{couleur_pl}=[circle,draw=black!50,fill=blue!20,thick, inner sep = 0pt, minimum size = 2mm]

\fill [black!20] (0,0)--(1.5*\ax,1.5*\ay)--(1.5*\bx,1.5*\by) -- cycle;
\node at (\ax, \ay) [couleur_pl] {};
\node at (\bx, \by) [couleur_pl] {};
\draw[->, ultra thick] (0,0) -- (\ax,\ay) node[below right] {\( \rho(D_1) \)};
\draw[->, ultra thick] (0,0) -- (\bx, \by) node[above left] {\(\rho(D_2) \)};

\draw (-3,0)--(3,0);
\draw (0,-3)--(0,3);

\fill [blue!20] (0,0)--(-1.5*\ax,-1.5*\ay)--(-1.5*\bx,-1.5*\by) -- cycle;
\node[] at (-0.9,-0.5) { \( \text{Reeb cone} \)};


\end{tikzpicture}
\caption{The Reeb and colored cones of a \( SL_3/U^{-} \)-conical embedding.}
\end{figure}
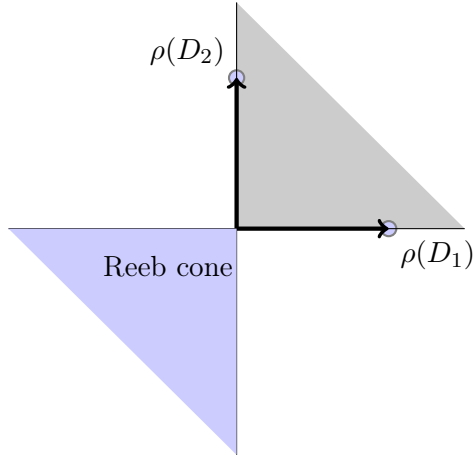

 Denote by \( T_{\xi}  \) the compact torus generated by \( \xi \), which is the closure of \( \exp(\Rbb \xi) \) in \( \Aut_G (Y_{\text{reg}})^{0} \). In particular, \(T_{\xi} \) is a compact subtorus of \(T_{H,c}\). If \( \dim T_{\xi} = 1 \), or equivalently \( \xi \in \Ncal(T_H)_{\Qbb} \), we say that \( \xi \) is \textit{quasi-regular}. If \( \dim T_{\xi} > 1 \) it is called \textit{irregular}.

Let \( \Phi \) be the root system corresponding to $(G,T)$. Let us denote by $\Phi^{+}$ the set of positive roots (with respect to a choice of the Borel subgroup) and $S$ the positive simple roots. Let \( P^u \) (resp. \(Q^u\)) be the nilpotent radical of \( P \) (resp. \( Q \)). The choice of the parabolic group $P = N_G(H)$ is equivalent to the choice of a subset $I \subset S$ (cf. Prop. \ref{horospherical_space_characterization_proposition}). Moreover, let $\Phi_I$ be the root system generated by $I$, the set of $P^u$-roots is then \( \Phi_{P^u} := \Phi^{+} \backslash \Phi_I \). In particular, \( \Phi_{P^u} = - \Phi_{Q^u} \). 
The Lie algebra of \( G \) can be decomposed as:
\[\gfrak = \qfrak^u \oplus \lfrak \oplus \pfrak^u \]  
where \(\qfrak^u\), \( \pfrak^u \) and \( \lfrak \) are the Lie algebras of \( Q^u \), \( P^u \), and the adapted Lévi subgroup \( L = P \cap Q \).
Since $G/H$ is a bundle with fiber \( P/H \simeq T_H \) over \(G/P \), we have:
\[n = r + \abs{\Phi_{P^u}} \]
We will denote by $S_{P^u} :=  \Phi_{P^u} \cap S = S \backslash I$ the set of simple roots of $P^u$ and (two times) the sum of roots in $Q^u$ (resp. \( P^u \) ) by:
$$\varpi_Q = 2 \sum_{\alpha \in \Phi_{Q^u}} \alpha = - \varpi_{P} $$ 
The following proposition describes the colors in a horospherical space in terms of coroots. 
\begin{prop} \cite{Pas08} \label{horospherical_colors_description_prop}
The images of the colors \( \Dcal \) in a horospherical space are exactly the restriction to \( \Mcal \) of the coroots of \( S_{P^u} \):
\[ \rho(\Dcal) = \set{\alpha^{\vee}|_{\Mcal}, \alpha \in S_{P^u} } \]
\end{prop}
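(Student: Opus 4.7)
The plan is to use the principal $T_H$-bundle structure $\pi: G/H \to G/P$ to transfer the computation to the flag variety $G/P$, and then to reduce to a rank-one $SL_2$-calculation.

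\emph{Step 1: transfer to $G/P$.} I would first note that $\pi$ is a principal $T_H$-bundle with connected fibre, so pullback along $\pi$ sets up a bijection between the $B$-stable prime divisors of $G/P$ and those of $G/H$, and preserves the associated valuations. In particular, every color of $G/H$ has the form $D = \pi^{-1}(\tilde D)$ for a unique $B$-stable prime divisor $\tilde D$ of $G/P$, and computing $\rho(\Dcal)$ on $G/H$ amounts to identifying these divisors on $G/P$ and computing the order of vanishing along each of them of certain rational sections.

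\emph{Step 2: colors on the flag variety.} By the Bruhat decomposition of $G/P$, the $B$-stable prime divisors of $G/P$ are exactly the Schubert divisors, indexed by the simple roots $\alpha \in S \setminus I = S_{P^u}$. Denote them by $\tilde D_\alpha$, and set $D_\alpha := \pi^{-1}(\tilde D_\alpha)$. This already shows $\Dcal = \{D_\alpha : \alpha \in S_{P^u}\}$ is in bijection with $S_{P^u}$.

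\emph{Step 3: explicit semi-invariants and reduction to $SL_2$.} For $\chi \in \Mcal$, I would exhibit $f_\chi \in \Cbb(G/H)^{(B)}$ of weight $\chi$ via line bundles on $G/P$: since $\chi$ vanishes on the coroots of $I$, it extends uniquely to a character of $P$ that is trivial on $H$, defining a $G$-linearized line bundle $L_\chi \to G/P$; then $f_\chi$ is the pullback under $\pi$ of the (essentially unique) $B$-semi-invariant rational section $s_\chi$ of $L_\chi$. Consequently $\nu_{D_\alpha}(f_\chi) = \ord_{\tilde D_\alpha}(s_\chi)$, and it remains to compute this integer. Restricting to the $SL_2$-subgroup generated by $U_{\pm \alpha}$, its orbit through the base point of $G/P$ is a rational curve $\simeq \Pbb^1$ meeting $\tilde D_\alpha$ transversely at a single point, on which $L_\chi$ restricts to $\Ocal(\sprod{\chi, \alpha^\vee})$. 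Hence $s_\chi|_{\Pbb^1}$ vanishes to order $\sprod{\chi, \alpha^\vee}$ at that point, so $\rho(D_\alpha)(\chi) = \sprod{\chi, \alpha^\vee}$ for every $\chi \in \Mcal$, giving $\rho(D_\alpha) = \alpha^\vee|_{\Mcal}$.

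The main obstacle is Step 3: it requires carefully choosing the $SL_2$-orbit in $G/P$, verifying transversality with the Schubert divisor, and tracking the Borel--Weil normalizations so that the restriction of $L_\chi$ to the $\Pbb^1$ really produces $\Ocal(\sprod{\chi, \alpha^\vee})$. Once these conventions are pinned down, the order of vanishing is a direct rank-one check, and the bookkeeping of Steps 1--2 is essentially formal.
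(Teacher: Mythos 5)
Your argument is correct, and it is essentially the standard proof of this fact; note that the paper itself does not prove the proposition but cites it from Pasquier's thesis \cite{Pas08}, where the argument runs along exactly the lines you describe (colors as preimages of the Schubert divisors of $G/P$ under the torus fibration, and the order of vanishing $\nu_{D_\alpha}(f_\chi)=\sprod{\chi,\alpha^{\vee}}$ computed via the line bundle $L_\chi$). The only point worth tightening is Step 1: the bijection between $B$-stable prime divisors of $G/H$ and of $G/P$ does not follow from the principal-bundle structure alone (a torus itself has many prime divisors that are not pullbacks); what you actually need is that the open $B$-orbit $BH/H$ equals the full preimage of the big cell of $G/P$ — which holds because its $T_H$-translates are again open $B$-orbits, hence coincide with it — so that every color is ``vertical'' and the identification with Schubert divisors goes through.
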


Since we only work with quasi-affine spherical spaces, \( \rho(\Dcal) \) does not contain \( 0 \), i.e. for all \( \alpha \in \Phi_{P^u} \), \( \alpha^{\vee}|_{\Mcal} \neq 0 \).

The following proposition follows straightforwardly from our criterion in Thm. \ref{spherical_cone_criterion}. 

\begin{prop}
Let \( Y \) be a horospherical affine variety with open \( G \)-orbit \( G/H \) and colored cone \( (\Ccal_Y, \Dcal_Y) \). The following assertions are equivalent : 
\begin{enumerate}
\item  \( Y \) is a cone. 
\item  \( \Ccal_Y \) is full dimensional and \( \Dcal_Y = \Dcal \). 
\item \( G \) acts on \( Y \) with a fixed point. 
\item \( T_H \) acts on \( Y \) with a fixed point.  
\end{enumerate}
\end{prop}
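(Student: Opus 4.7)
The plan is to derive each of the four equivalences from results already assembled in the excerpt, using the fact that horospherical spaces have the simplest possible valuation cone. Recall from Proposition \ref{horospherical_space_characterization_proposition} that if \( G/H \) is horospherical, then \( \Vcal(G/H) = \Ncal_{\Rbb} \). In particular, as soon as the rank is positive, \( \Vcal \) automatically contains a line; the rank-zero case is excluded because a horospherical space of rank \( 0 \) is a flag variety \( G/P \), which is projective and hence cannot be open dense in the affine variety \( Y \) unless everything is trivial.

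For the equivalence (i) \(\Leftrightarrow\) (ii), I would apply Theorem \ref{spherical_cone_criterion}: in general, \( Y \) is a spherical cone iff \( \Vcal \) contains a line, \( \Ccal_Y \) is of maximal dimension, and \( \Dcal_Y = \Dcal \). In the horospherical case the first condition is automatic by the observation above, so the criterion collapses to exactly the conjunction of full-dimensionality of \( \Ccal_Y \) and \( \Dcal_Y = \Dcal \), which is (ii).

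For (i) \(\Leftrightarrow\) (iv), there is nothing to prove: the definition of a spherical cone in the excerpt is that \( Y \) is an affine spherical \( G \)-variety containing a fixed point under a non-trivial torus in \( \Aut_G^{0}(Y) \simeq T_H \); since in the horospherical setting \( T_H = P/H \) is always non-trivial whenever the rank is positive, (i) and (iv) are literally the same statement. Finally, for (i) \(\Leftrightarrow\) (iii), I would invoke Proposition \ref{spherical_cone_g_fixed_point}, which says that an affine spherical \( G \)-variety is a cone iff it strictly contains a \( G \)-fixed point; strict containment is automatic since \( G/H \) is open dense in \( Y \) and nontrivial.

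No step in this plan is a real obstacle: the heavy lifting has already been done in Theorem \ref{spherical_cone_criterion} and Proposition \ref{spherical_cone_g_fixed_point}, and the only thing one has to verify is that the structural hypothesis \( \Vcal = \Ncal_{\Rbb} \) (Proposition \ref{horospherical_space_characterization_proposition}) eliminates the "line in \( \Vcal \)" condition. The small bookkeeping point worth double-checking is the rank-zero case, which is ruled out by the affineness of \( Y \) together with the fact that horospherical spaces of rank \( 0 \) are compact flag varieties.
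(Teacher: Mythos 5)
Your plan coincides with the paper's, which offers no proof beyond the remark that the proposition ``follows straightforwardly from our criterion in Thm.~\ref{spherical_cone_criterion}'': reducing (i) $\Leftrightarrow$ (ii) to that criterion via $\Vcal = \Ncal_{\Rbb}$ for horospherical spaces, and (i) $\Leftrightarrow$ (iii) to Prop.~\ref{spherical_cone_g_fixed_point}, is exactly the intended route, and your handling of the rank-zero degeneracy is correct. The one imprecision is the claim that (i) and (iv) are \emph{literally} the same statement: (i) asks for a fixed point of \emph{some} non-trivial subtorus of $T_H$, whereas (iv) asks for a fixed point of all of $T_H$, so while (iv) $\Rightarrow$ (i) is immediate, the converse still needs the one-line argument (as in the proof of Thm.~\ref{spherical_cone_criterion}) that the fixed point in question is the unique $G$-fixed point of the simple affine embedding $Y$, hence is preserved, and therefore fixed, by the commuting torus $T_H$.
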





\subsubsection{Conical Calabi-Yau metrics on horospherical cones}

Let \( Y \) be a \( \Qbb\)-Gorenstein horospherical cone of rank \( r\) and of dimension \( n \) , which is \( G \times T_H \)-equivariantly embedded in \( \Cbb^N \). Let us first state a useful remark. 

\begin{rmk}
By the Iwasawa decomposition of \( G \), cf. \cite[Prop. 2.7]{Del20}, \( \exp(\tfrak_{H, nc})H = T_H \) defines a fundamental domain for the \( K \)-action on \( G/H \), i.e. \( G/H \) can be written as a disjoint union of \( K \)-translated copies of \( T_H \). It follows that every \( K \)-invariant function \( u \) on \( G/H \) restricts on \( T_H \) to a function left-invariant by \( T_{H,c} \). In particular, in the coordinates \( (z_1, \dots,z_r) \) of \( T_H \),  
\[ u(z_1, \dots, z_r) = u( \log \abs{z_1}^2, \dots, \log \abs{z_r}^2) \] 
As a consequence, \( u \) is also right-invariant by \( T_{H,c} \). Therefore every \( K \)-invariant function on \( G/H \) is automatically \( (K \times T_{H,c}) \)-invariant. The same goes for \( K \)-invariant forms on \( G/H \).  
\end{rmk}

\begin{defn}
A \emph{\( K \)-invariant plurisubharmonic (psh) function} on \( Y \) is a function \( f \) such that for every local \( K \)-equivariant embedding of \( Y \), \( f \) is the restriction of a global \( U_N \)-invariant psh function on \( \Cbb^N \).
\end{defn}

\begin{defn}
Let \( (Y,\xi) \) be a polarized horospherical cone. A \( K \)-invariant \emph{\( \xi \)-radial function} (or \emph{\(\xi\)-conical potential}) on \( Y \) is a positive \( K \)-invariant strictly psh function \( \rho^2 : Y \to \Rbb_{>0} \) such that \( \rho^2|_{G/H} \) is smooth and
\[ \Lcal_{-J \xi} \rho^2 = 2 \rho^2 \] 
on \( Y_{\text{reg}} \). If moreover \( \rho^2 \) is locally bounded on \( Y \), the \( K \)-invariant Kähler (1,1)-current defined as  
\[ \omega := dd^c \rho^2 \] 
is then called a \emph{Kähler cone current}.  
\end{defn} 
Note that \( \omega \) is well-defined by the local theoy of Bedford-Taylor \cite{BT76}. Since \( T_{\xi} \subset T_{H,c} \), a \( K \)-invariant \( \xi \)-radial function is automatically \( \xi \)-invariant.  Before stating the definition of conical Calabi-Yau metrics on \( Y \), let us first make some digressions on linearized line bundles. 

\begin{defn}
Let \( G \) be a linear algebraic group and  \( X \) be any \( G \)-variety. A  \emph{$G$-linearized line bundle} on $X$ is a line bundle \( \Lcal \) over $X$ endowed with an action of $G$ such that (i) the projection $\pi : \Lcal \to X$ is $G$-equivariant and that (ii) \( G \) acts linearly on the fibers of \( \Lcal\).
\end{defn}

 For all section \( s  \in H^0(X, \Lcal) \) and \( x \in X \), \( G \) induces an action on \( H^0(X, \Lcal) \) by: 
\[ (g.s)(x) = g.s(g^{-1} x)  \] 
The group \( G \) then acts on \( H^0(X, \Lcal) \) and the latter is a rational \( G \)-module. For every \textit{reductive} group \( G \) and every line bundle \( \Lcal \) over a \textit{normal} \( G \)-variery, there exists a positive integer \( m > 0 \) such that \( m \Lcal \) is \( G \)-linearized. If in addition \( \Cbb[G] \) is factorial, then one can choose \( m = 1 \). Every \( G \)-linearized line bundle over \( G \) is trivial. We refer the reader to \cite{KKLV89} for the proofs of these assertions. 

\begin{ex}
Let \( H \) be any closed subgroup of \( G \). Every \( G \)-linearized line bundle \( \Lcal \) over \( G/H \) is determined by a character \( \chi_H \) of \( H \). Indeed, given \( \chi_H \), the quotient \( \Lcal_{\chi_H} \)  of \( G \times \Cbb \) by the action:
\[ h(g,z) = (gh^{-1}, \chi_H(h)z) \]
is a  \( G \)-linearized line bundle over \( G/H \). Conversely, if \( \Lcal \) is \( G \)-linearized, then \( H \) acts on the fiber \( \Lcal_{eH} \) by a character \( \chi_{H} \) . We can then directly prove that 
\( G \times \Lcal_{eH} \to \Lcal \),  \( (g,z) \to (gH, g.z) \) induces an isomorphism between \( \Lcal_{\chi_H} \) and \( \Lcal \).
\end{ex}

Let us come back to the setting of our cone \( Y \). 
Since \( Y \) is \( \Qbb \)-Gorenstein, the canonical bundle \( m \Kcal_Y \) is a well-defined Cartier divisor and naturally \( (G \times T_H) \)-linearized for some integer \( m > 0 \). 

Note that the Picard group of \( Y \) is trivial (since \( Y = Y_{\set{0_Y},B} \) by Thm. \ref{bstable_affine_chart}, and \( \text{Pic}(Y_{\set{0_Y},B}) = 0 \) by  \cite[2.1, Proposition]{Bri91}). It follows that there exists a nowhere vanishing holomorphic \( ( G \times T_H) \)-invariant section \( s \) of \( m \Kcal_Y \), and a \( K \)-invariant volume form \( dV_Y \) defined by 
\[ d V_Y = (\sqrt{-1}^{m n^2} s \wedge \ol{s})^{1/m} \]

\begin{rmk} To simplify the notation, we will sometimes abuse the language and say that \( s \) is a ``multivalued'' holomorphic section of \( \Kcal_Y \) and simply write \( dV_Y = i^{n^2} s \wedge \ol{s} \). 
\end{rmk}

The \( \Qbb\)-Gorenstein singularities of \( Y \) implies the existence of a linear function  \( l : (-\Ccal_Y) \to \Rbb_{> 0} \) (which is \( -\beta \) in Prop. \ref{q_gorenstein_condition}) such that for all \( D_\nu \in \Vcal_Y \) and \( D \in \Dcal \):
\[ \sprod{l,\rho(D_\nu)} = -1, \quad \sprod{l, \rho(D)} = -a_D \] 
where \( a_D \) is the coefficient of \( D \) in the expression of \( \Kcal_Y \). 

\begin{defn}  \label{canonical_volume_definition}
A \( T_{H,c} \)-invariant nowhere-vanishing section \( s_Y \in H^0(Y, \Kcal_Y) \) is said to be a \emph{canonical section} if 
\[ \Lcal_{-J\xi} s_Y = \sprod{l, \xi} s_Y \] 
on \( Y_{\text{reg}} \) for all \( \xi \in \text{Int}(-\Ccal_Y) \), where \( - \Ccal_Y \) is the Reeb cone of \( Y \) and \( \Lcal_{-J \xi} \) is the Lie derivative for the right-action of \( -J \xi \).

A volume form \( dV_Y \) on \( Y \) is called \emph{canonical} if on \( Y_{\text{reg}} \),
\[ \Lcal_{-J \xi } dV_Y = 2 \sprod{l,\xi} dV_Y \]
\end{defn}

\begin{rmk} Since every \( \Qbb\)-Gorenstein spherical variety has klt singularities \cite{Pas17}, it follows from \cite{MSY08} (see also \cite[Lemma 6.2]{CS19}) that a canonical volume form always exists. In what follows, we will give an explicit expression of the canonical volume form on \( Y \). 
\end{rmk}

\begin{rmk}
A canonical section \( s_Y \) determines a canonical volume form by setting \( dV_Y := i^{n^2} s_Y \wedge \ol{s_Y} \). Conversely, let \( dV_Y \) be a canonical volume form determined by a multivalued holomorphic section \( s \). For each \( \xi \), there exists a real function \( f_{\xi} \) such that
\[ \Lcal_{-J\xi} s = f_{\xi} s \]
It follows that \( \Lcal_{-J\xi} dV_Y = 2 f_{\xi} i^{n^2} s \wedge \ol{s} \), hence \( f_{\xi} = \sprod{l,\xi} \) for all \( \xi \in -\Ccal_Y \).
\end{rmk}

\begin{rmk} A canonical section is unique up to a constant. Indeed, for any two canonical sections, there exists \( f \in \Cbb[Y]^{*} = \Cbb \) such that \( s_1  = f s_2 \). In particular, a canonical volume form is unique up to a constant.  
\end{rmk}

\begin{defn}
Let \( (Y,\xi) \) be a \( \Qbb\)-Gorenstein polarized horospherical cone of dimension \( n \)  and \( dV_Y \) its canonical volume form. A \emph{\( K \)-invariant conical Calabi-Yau metric} on \( Y \) is a \( K \)-invariant Kähler cone current \( \omega \) on \( Y \) satisfying 
\[ \omega^n = (dd^c \rho^2)^n = dV_Y \] 
such that the \( \xi\)-radial function \( \rho^2 \) is smooth on the regular locus \( Y_{\text{reg}} \). The function \( \rho^2 \) is then said to be a \emph{conical Calabi-Yau potential}. 
\end{defn}

In particular, a Calabi-Yau metric is a singular Kähler-Einstein metric in the sense of \cite{EGZ}. As one will see, the above equation takes the form of a real Monge-Ampère equation due to symmetry by \( K \). 

\subsection{Curvature  and canonical volume form} 

\subsubsection{Curvature form}

The expression of a \( K \)-invariant Kähler form \( \omega \) on \( G/H \) was given by Delcroix \cite{Del20}. Let us first introduce some terminologies. 

Recall that $\tfrak_{H,nc} \simeq \Rbb^r$ is the Lie algebra of $T_H$. Let $\mathfrak{p}^u$ be the Lie algebra of the unipotent radical $P^u$. Denote by \( J \) the complex structure of \( G/H \), and \( \sqrt{-1} \) the complex structure coming from the complexification of the real vector space \( \gfrak/ \hfrak \), which can be decomposed as 
\begin{equation*}
 \gfrak / \hfrak \otimes \Cbb \simeq \bigoplus_{\alpha \in \qfrak^u} \Cbb u_{\alpha} \bigoplus \tfrak_{H,nc} \bigoplus \sqrt{-1} \tfrak_{H,nc}  
\end{equation*}
Here \( u_{\alpha} \) is the eigenvector with weight \( \alpha \) of \( \qfrak^u \). 

Let us come back to our conical embedding \( G/H \hookrightarrow Y \). Consider a  $(G \times T_H)$-linearized line bundle  $\Lcal$ over $G/H$ with a $K$-invariant hermitian metric $h$.  Let $i^{*} \Lcal $ be the line bundle over  $T_H$ induced by the natural inclusion $i : T_H \hookrightarrow G/H$, endowed with the  $T_{H,c}$-invariant metric $i^{*}h$. 

After replacing \( i^{*} \Lcal \) by a multiple large enough, we can suppose that \( i^{*} \Lcal \) is $T_H$-linearized and trivial over $T_H$. In particular, the metric $i^{*}h$ is the curvature form of a \( T_{H,c} \)-invariant global potential \( u \). 

By pullback using the map $\exp : \tfrak_{H,nc} \to T_H$, we can consider $u$ as a function $u: \tfrak_{H,nc} \simeq \Rbb^r \to \Rbb$. This potential in turn defines the metric, as shown by Delcroix. 

\begin{prop} \cite[Proposition 2.7]{Del20}
The metric $h$ is completely determined by the global potential $u : \tfrak_{H,nc} \to \Rbb$ of the metric $i^{*} h$, called the \textit{toric potential}.
\end{prop}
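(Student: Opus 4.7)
The plan is to leverage the Iwasawa-type decomposition $G/H = K \cdot T_H$ observed in the remark preceding the proposition, together with the fact that $T_{H,c} = K \cap T_H$ is the precise amount of redundancy in this decomposition. First I would record that, after replacing $\mathcal{L}$ by a sufficiently large multiple, $i^{*}\mathcal{L}$ is $T_H$-linearized and trivial, so it admits a nowhere-vanishing $T_H$-invariant global section $s$. Then $u := -\log |s|^{2}_{i^{*}h}$ is a well-defined global potential for $i^{*}h$, and it is $T_{H,c}$-invariant because $T_{H,c}\subset K$ acts on sections by the $G$-linearization while $i^{*}h$ is $K$-invariant by assumption. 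Since $T_{H,c}$ acts freely on $T_H$ with fundamental domain $T_{H,nc} = \exp(\mathfrak{t}_{H,nc})$, pulling back $u$ along $\exp$ yields a genuine function $u:\mathfrak{t}_{H,nc}\to\mathbb{R}$, independent of additional choices up to normalizing $s$.

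The substantive content is the reconstruction of $h$ from $u$, which is what "completely determined" demands. Given $u$, define a hermitian metric on $i^{*}\mathcal{L}$ over $T_{H,nc}$ by declaring $|s|^{2} = e^{-u}$, then extend $T_{H,c}$-equivariantly to all of $T_H$ using the $T_H$-linearization of $i^{*}\mathcal{L}$; this extension is consistent precisely because $u$ depends only on the $\mathfrak{t}_{H,nc}$-coordinate. To promote this to a metric on $\mathcal{L}$ over $G/H$, I would use the $G$-linearization to push it around by $K$: for $x = k\cdot p\in K\cdot T_H$ and a local section $\sigma$ at $x$, set $|\sigma(x)|^{2}_{h} := |k^{-1}\cdot\sigma(x)|^{2}_{i^{*}h}$, where $k^{-1}$ acts through the linearization of $\mathcal{L}$.

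The only point requiring real care is that this recipe is unambiguous: if $k_1 p_1 = k_2 p_2$ in $G/H$ with $k_i\in K$ and $p_i\in T_H$, I must show that the two definitions of $|\sigma(x)|^{2}_{h}$ agree. Writing $k := k_2^{-1} k_1$, the relation $k\cdot p_1 = p_2$ says $k$ belongs to the stabilizer of a point in $T_H$ translated into $T_H$, which forces $k\in K\cap T_H = T_{H,c}$; by $T_{H,c}$-invariance of $u$, the two recipes coincide. This will be the main obstacle, but it is largely bookkeeping once the decomposition $G/H = K\cdot T_H$ with redundancy group $T_{H,c}$ is in hand. Uniqueness of $h$ given $u$ then follows immediately, since any $K$-invariant $h'$ with the same toric potential produces the same metric via this construction, and $K$-invariance forces agreement everywhere on $G/H$.
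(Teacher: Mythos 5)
The paper gives no proof of this proposition; it is quoted from \cite{Del20}, and the intended argument is exactly the one hiding in your last two sentences: by the Iwasawa-type decomposition, $\exp(\tfrak_{H,nc})\cdot eH$ is a fundamental domain for $K$ acting on $G/H$, so if $h_1,h_2$ are two $K$-invariant metrics with the same toric potential, their ratio is a $K$-invariant positive function on $G/H$ equal to $1$ on the fundamental domain, hence equal to $1$ everywhere. That two-line injectivity argument is all that ``completely determined'' requires, and to run it you do not need to reconstruct a metric from an abstract $u$ at all.

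The reconstruction, which is where you put most of your effort, has a genuine gap precisely at the step you single out as the main obstacle. From $k_1p_1H=k_2p_2H$ you get $k:=k_2^{-1}k_1\in p_2Hp_1^{-1}\subset P$, so $k\in K\cap P$ --- not ``$K\cap T_H$'', which does not parse since $T_H=P/H$ is a quotient of $P$ rather than a subgroup of $G$. The group $K\cap P$ is in general far larger than $T_{H,c}$: it contains a maximal compact subgroup of the Levi $L=P\cap Q$, hence is typically nonabelian (already for $P$ a maximal parabolic of $SL_3$ it contains a copy of $SU(2)$ sitting inside $H$). What is true, and what you need, is the following chain: since $H$ is normal in $P=N_G(H)$ and $P/H$ is abelian, left translation by $k\in K\cap P$ on $P/H$ coincides with translation by the image $\bar k$ of $k$ in $P/H$; that image is a compact subgroup of the algebraic torus $T_H$, hence lies in $T_{H,c}$; and only then does $T_{H,c}$-invariance of $u$ enter. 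Even after that, you must reconcile the action of $k$ on the fibers of $\Lcal$ coming from the left $G$-linearization with the right $T_H$-linearization you used to trivialize $i^*\Lcal$ and define $u$; these can differ by a character of the compact group $K\cap P$, which is harmless for the metric only because such characters are unitary. None of this is in your write-up. I would either supply these three points or, better, delete the reconstruction and keep the fundamental-domain argument, which proves the stated proposition on its own.
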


By the Iwasawa decomposition in \textit{loc. cit.}, the $K$-invariant curvature form of $h$, denoted by \( \omega \), is uniquely determined by its restriction to $T_H$. 

Let \( (l_j)_{1 \leq j \leq r} \) be a real basis of \( \tfrak_{H,nc} \), \( (Jl_j)_{1 \leq j \leq r} \) a basis of \( J \tfrak_{H,nc} \),   \( (u_{-\alpha})_{\alpha \in \Phi_{P^u}} \) the basis of the root decomposition in \( \qfrak^u \).  

Consider the associated holomorphic vector 
\[ \zeta_j := \frac{1}{2}(l_j - \sqrt{-1} J l_j) , \quad w_{\alpha} = \frac{1}{2}(u_{-\alpha} - \sqrt{-1} J u_{-\alpha}) \]  
and associate to them the \(T_H\)-invariant holomorphic vector fields of the tangent bundle of \(G/H \) by translation under the action of \( T_H \), i.e. for all \( p \in T_H \), we define: 
 \[ \zfrak_j : p \to  (R_{p^{-1}})_{*} \zeta_j, \quad \wfrak_{\alpha} : p \to (R_{p^{-1}})_{*} w_{\alpha}   \] 

Let $\set{\zfrak^{*}_j}_{1 \leq j \leq r} \cup \set{\wfrak^{*}_{\alpha}}_{\alpha \in \Phi_{P^u}}$ be the dual \( (1,0) \)-forms of the vector fields \( \set{\zfrak_j} \cup \set{\wfrak_{\alpha}} \).

\begin{prop} \cite{Del20} \label{KahlerForm}
For all $x \in \tfrak_{H,nc}$, 
\begin{equation*}
\omega_{\exp(x)H} = \sum_{1 \leq j_1, j_2 \leq r} \frac{1}{4} \frac{\del^2 u}{\del x_{j_1} \del x_{j_2}}  i \zfrak^{*}_{j_1} \wedge \ol{\zfrak_{j_2}^{*}} + \sum_{\alpha \in \Phi_{P^u}} e^{-2 \sprod{\alpha,x}} (\alpha, d_x u + \varpi) \frac{i}{2}  \wfrak^{*}_{\alpha} \wedge \ol{\wfrak_{\alpha}^{*}}  
\end{equation*}
where \( (d_x u + \varpi_Q, \alpha) = \sprod{\nabla u(x) + t_{\varpi_Q}, \alpha} \). In particular, the volume form defined by \( \omega \) is:
\[ \omega^n_{\exp(x)H} = \frac{n!}{2^r} e^{- \sprod{\varpi_Q,x}} \det(d_x^2 u) \prod_{\alpha \in \Phi_{P^u}} (\alpha, d_x u + \varpi_Q)  \bigwedge_{1 \leq j \leq r} \frac{i}{2} \zfrak^{*}_{j} \wedge \ol{\zfrak^{*}_j} \bigwedge_{\alpha \in \Phi_{P^u}} \frac{i}{2} \wfrak^{*}_{\alpha} \wedge \ol{\wfrak^{*}_{\alpha}}  \] 
\end{prop}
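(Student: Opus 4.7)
The $K$-invariance of $\omega$, together with the Iwasawa-type decomposition $G = K \cdot \exp(\tfrak_{H,nc}) \cdot H$ used in the remark preceding this proposition, reduces the task to evaluating $\omega$ at a single generic point $\exp(x)H$ of the fundamental domain $T_H$. At such a point the holomorphic tangent space decomposes as
\[
T_{\exp(x)H}^{1,0}(G/H) \;=\; \bigoplus_{j=1}^{r} \Cbb\,\zfrak_j \;\oplus\; \bigoplus_{\alpha \in \Phi_{P^u}} \Cbb\,\wfrak_\alpha,
\]
where $\zfrak_j$ is of weight $0$ and $\wfrak_\alpha$ is of weight $\alpha$ under the right $T_{H,c}$-action. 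Since $K$-invariance of a tensor automatically forces $T_{H,c}$-invariance by the remark opening this subsection, weight considerations kill every cross-term: $\omega_{\exp(x)H}$ must be block diagonal between toric and flag directions, and diagonal within the flag block in the $\wfrak_\alpha$ basis.

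The toric block is immediate from the definitions: $u$ is by construction a $T_{H,c}$-invariant Kähler potential for $i^{*}h$, so $\omega|_{T_H} = dd^c u$ in the standard complex coordinates of $T_H$, and unwinding this in the real coordinates $x$ on $\tfrak_{H,nc}$ produces exactly the Hessian coefficients $\tfrac{1}{4}\frac{\del^2 u}{\del x_{j_1}\del x_{j_2}}$, the $1/4$ coming from the standard conversion between real Hessians and $(1,1)$-forms. The flag coefficient is the delicate step. I would write $\omega = dd^c \phi$ on the open $B$-orbit, where $\phi = -\log h(s_0)$ is built from a $B$-semi-invariant local trivializing section $s_0$ of $\Lcal$; on $T_H$ this function coincides with $u(x)$, but in the flag directions it acquires an explicit contribution coming from the $B$-weight of $s_0$. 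Two ingredients then combine: evaluating the right-invariant field $\wfrak_\alpha$ at $\exp(x)H$ produces a conformal factor $e^{-2\sprod{\alpha,x}}$ because $u_{-\alpha}$ is a $T$-weight vector of weight $-\alpha$; and a Maurer-Cartan computation, using the commutation $[u_\alpha, u_{-\alpha}] \in \tfrak$ and the fact that the Iwasawa retract differentiates along $\wfrak_\alpha$ in the direction $t_\alpha$, turns $\partial\overline\partial \phi(\wfrak_\alpha, \ol{\wfrak_\alpha})$ into a first-order derivative of $u$ in the direction $t_\alpha$, contributing $(\alpha, d_x u)$, plus a constant correction $(\alpha, \varpi_Q)$ encoded by the $B$-weight of $s_0$.

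The volume form then follows at once from the block-diagonal structure: the toric block contributes $\det(d_x^2 u)$, the flag block is diagonal and contributes $\prod_{\alpha \in \Phi_{P^u}} e^{-2\sprod{\alpha,x}}(\alpha, d_x u + \varpi_Q)$; collecting the exponentials and invoking the definition of $\varpi_Q$ as (minus) twice the sum of roots in $\Phi_{P^u}$ produces the global factor $e^{-\sprod{\varpi_Q,x}}$, and the combinatorial constant $n!/2^r$ is the standard one obtained when reordering a wedge product of $n$ commuting $(1,1)$-forms. The main obstacle in this outline is the careful derivation of the $\varpi_Q$ shift in the flag coefficient: it requires tracking how the chosen $(G \times T_H)$-equivariant trivialization of $\Lcal$ differs from its restriction to $T_H$, and is the point where the genuine twist of the horospherical fibration $G/H \to G/P$, as opposed to a purely toric calculation, really enters.
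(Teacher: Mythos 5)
First, a point of comparison: the paper gives no proof of Proposition \ref{KahlerForm} at all --- it is quoted from \cite{Del20} --- so there is no internal argument to measure your outline against; the benchmark is the computation in the cited reference, and your overall strategy (Iwasawa reduction to $T_H$, weight decomposition of $\gfrak/\hfrak$ into toric and flag blocks, real Hessian for the toric block, a first-order Lie-algebra computation for the flag block, top wedge for the volume form) is indeed the right skeleton. Two of your steps, however, do not work as written.

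The first is the diagonalization. You kill all cross-terms using the weights of the right $T_{H,c}$-action, under which $\wfrak_{\alpha}$ carries only the image of $\alpha$ in $\tfrak_{H}^{*}\cong\Mcal_{\Rbb}$, i.e.\ $\alpha$ modulo the span of $\Phi_I$. Two distinct roots $\alpha,\beta\in\Phi_{P^u}$ with $\alpha-\beta\in\Zbb\Phi_I$ (for instance $\alpha_2$ and $\alpha_1+\alpha_2$ when $G=SL_3$ and $I=\set{\alpha_1}$) then have the same right weight, and the cross-term $\wfrak_{\alpha}^{*}\wedge\ol{\wfrak_{\beta}^{*}}$ survives your argument. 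What actually forces diagonality is the isotropy representation at $\exp(x)H$ of the full compact torus $T_c$ (sitting diagonally in $K\times T_{H,c}$; equivalently, invariance under $L\cap K$ together with the right action), where $\wfrak_{\alpha}$ has the genuine weight $-\alpha$ and distinctness of roots applies. This is a real use of the left $K$-invariance beyond its restriction to $T_{H,c}$, and it must be said.

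The second is the flag coefficient, which you correctly flag as the crux but whose mechanism you misattribute. The correction $(\alpha,\varpi_Q)$ cannot be ``the $B$-weight of the trivializing section $s_0$'': that weight varies with $\Lcal$, whereas $\varpi_Q$ does not, and additivity of curvature under tensor products of metrized, linearized line bundles (with toric potentials adding) is incompatible with any bundle-dependent shift in the stated formula. The shift is a feature of how the toric potential $u$ is normalized against the $(G\times T_H)$-linearization and of the twist of the fibration $G/H\to G/P$ itself --- precisely the comparison of trivializations you mention in your last sentence --- and it is the one computation your outline defers entirely. Since this term is what ultimately produces the $P_{DH}$-weight and the $\varpi_Q$-shifted potential $u_\varpi$ in the rest of the paper, a proof that does not derive it has not proved the proposition; as it stands your plan establishes the block structure and the toric block, and postpones the part of the statement that carries the content.
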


\subsubsection{Canonical volume form}

We provide in this part an expression of the canonical volume form of a \( \Qbb \)-Gorenstein horospherical cone.

\begin{prop} \label{CanonicalVolumeForm}
The canonical volume form on $Y$ is \( K \)-invariant on \( G/H \) and writes in $T_H$ as (up to a constant):
\begin{equation*}
(dV_Y)_{\exp(x)H} = e^{\sprod{\beta - \varpi_Q ,x}} \bigwedge_{1 \leq j \leq r} \frac{i}{2} \zfrak^{*}_j \wedge \ol{\zfrak_j^{*}} \bigwedge_{\alpha \in \Phi_{P^u}} \frac{i}{2} \wfrak_{\alpha}^{*} \wedge \ol{\wfrak^{*}_{\alpha}}
\end{equation*}
for all \( x \in \tfrak_{H,nc} \). 
\end{prop}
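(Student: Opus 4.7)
The canonical volume form $dV_Y$ is characterized up to a positive multiplicative constant by being associated to a $T_H$-equivariant nowhere-vanishing global section of $m\Kcal_Y$ (for some $m$ clearing denominators in $\beta$) and satisfying the equivariance $\Lcal_{-J\xi}dV_Y = 2\sprod{l,\xi}dV_Y$ of Defn.~\ref{canonical_volume_definition}. The plan is to build such a section explicitly and compute its expression on the open $T_H$-orbit.

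Since $Y = Y_{\{0_Y\},B}$ is affine and normal by Prop.~\ref{bstable_affine_chart}, $\mathrm{Pic}(Y) = 0$, so $m\Kcal_Y$ is a trivial line bundle for a suitable $m > 0$ and admits a $(G \times T_H)$-linearization by classical linearization results. I would pick a nowhere-vanishing $(G \times T_H)$-semi-invariant global section $s_Y$ of $m\Kcal_Y$; its weight under the right-$T_H$-action is exactly $m\beta$, as can be read off from Brion's formula $m\Kcal_Y = -m\sum D_\nu - m\sum a_D \overline{D}$ (Prop.~\ref{canonical_divisor_description_prop}) combined with Prop.~\ref{q_gorenstein_condition}. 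To determine the left-$G$-character of $s_Y$, I would use the fibration $\pi : G/H \to G/P$ of Prop.~\ref{horospherical_space_characterization_proposition}: the fiber $P/H \simeq T_H$ has trivial canonical, hence $\Kcal_{G/H} \simeq \pi^{*}\Kcal_{G/P}$, and $\Kcal_{G/P}$ carries the well-known $G$-character coming from the (half-)sum of roots in $\qfrak^{u}$, which in the paper's normalization contributes $m\varpi_Q/2$ up to sign.

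Restricting $s_Y$ to the open $T_H$-orbit $T_H \cdot eH$ and expanding in the right-$T_H$-invariant holomorphic frame $\{\zfrak_j^{*}\} \cup \{\wfrak_\alpha^{*}\}$ of Prop.~\ref{KahlerForm}, the ratio is an exponential of a linear form in the $\tfrak_{H,nc}$-coordinate $x$. Careful bookkeeping of how the left-$G$ contribution transfers to a right-$T_H$-action on the torus orbit through the identification $T_H \simeq T/T \cap H$ produces the factor $e^{\sprod{m(\beta - \varpi_Q)/2,\,x}}$; forming $dV_Y = (i^{mn^{2}} s_Y \wedge \overline{s_Y})^{1/m}$ then recovers exactly the claimed prefactor $e^{\sprod{\beta - \varpi_Q,\,x}}$. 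The $K$-invariance of the resulting form on $G/H$ is automatic: the exponential depends only on $x$, which is preserved by the left $K$-action via the Iwasawa decomposition, and the frame $\bigwedge \frac{i}{2}\zfrak_j^{*}\wedge\overline{\zfrak_j^{*}} \wedge \bigwedge \frac{i}{2}\wfrak_\alpha^{*}\wedge\overline{\wfrak_\alpha^{*}}$ extends $K$-equivariantly over $G/H$. The main technical obstacle is the sign and factor-of-$1/2$ bookkeeping between the $\Mcal$--$\Ncal$ pairing, the left-$G$ versus right-$T_H$ characters on the torus orbit, and the paper's normalization $\varpi_Q = 2\sum_{\Phi_{Q^u}} \alpha$; once these are aligned, the required equivariance $\Lcal_{-J\xi} dV_Y = 2\sprod{l,\xi} dV_Y$ follows immediately, since the right-$T_H$-invariant frame has zero Lie derivative under right-$T_H$-flow and only the exponential factor contributes, recovering $l = -\beta$ and completing the verification.
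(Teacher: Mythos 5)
Your overall strategy --- trivialize \( m\Kcal_Y \) using \( \mathrm{Pic}(Y)=0 \), read the right-\( T_H \)-weight of the trivializing section off Brion's formula together with the \( \Qbb \)-Gorenstein condition, and get the remaining factor from the fibration \( G/H \to G/P \) --- is a legitimate route and genuinely different from the paper's. The paper does not rebuild the section: it takes the canonical volume form as given, compares it with Delcroix's already-computed \( K \)-invariant reference form \( dV_{\exp(x)H} = e^{-\sprod{\varpi_Q,x}}\bigwedge \frac{i}{2}\zfrak_j^{*}\wedge\ol{\zfrak_j^{*}}\wedge\bigwedge\frac{i}{2}\wfrak_\alpha^{*}\wedge\ol{\wfrak_\alpha^{*}} \), checks that \( |z|^{2\beta}\,dV \) satisfies the defining identity \( \Lcal_{-J\xi}(\cdot)=2\sprod{-\beta,\xi}(\cdot) \), and then shows the ratio of the two is a \( (K\times T_H) \)-invariant function, hence descends to the \( K \)-homogeneous flag variety \( G/P \) and is constant. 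That descent argument is the paper's substitute for your ``careful bookkeeping''; it is what lets the paper avoid ever computing the weight of the frame directly.

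The gap in your write-up is that the bookkeeping you defer is the entire content of the proposition, and your final self-check is not capable of detecting an error in it. Concretely: you assert that the frame \( \bigwedge\zfrak_j^{*}\wedge\ol{\zfrak_j^{*}}\wedge\bigwedge\wfrak_\alpha^{*}\wedge\ol{\wfrak_\alpha^{*}} \) has vanishing Lie derivative under the right-\( T_H \)-flow and that ``only the exponential factor contributes''. If that were so, the exponential \( e^{\sprod{\beta-\varpi_Q,x}} \) would give \( \Lcal_{-J\xi}dV_Y = 2\sprod{\varpi_Q-\beta,\xi}\,dV_Y \), i.e.\ \( l=\varpi_Q-\beta \) rather than the required \( l=-\beta \). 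Since \( \varpi_Q \) vanishes on the coroots of \( I \), it restricts to a nonzero linear form on \( \tfrak_{H,nc} \) (e.g.\ for \( SL_3/U^{-} \) it is a positive multiple of \( \alpha_1+\alpha_2 \)), so \( \sprod{\varpi_Q,\xi}\neq 0 \) for Reeb vectors in the interior of the Reeb cone and the discrepancy is real. The paper's own verification implicitly attributes a nontrivial weight to the combination ``\( e^{-\sprod{\varpi_Q,x}} \) times the frame'' (it is that combination, not the frame alone, which is annihilated by \( \Lcal_{-J\xi} \)); your accounting assigns the \( \varpi_Q \) to the exponential \emph{and} zero weight to the frame, which double-counts. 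Worse, the verification is blind exactly where it would need to see: had your bookkeeping produced \( e^{\sprod{\beta,x}} \) or \( e^{\sprod{\beta+\varpi_Q,x}} \) instead, the same closing sentence would have ``confirmed'' it. To close the argument you must actually carry out one of the two computations you postpone: either determine how the \( \wfrak_\alpha \)-directions transform under the right \( T_H \)-action relative to a \( K \)-invariant frame (this is where the \( e^{-\sprod{\varpi_Q,x}} \) lives, via \( \mathrm{Ad} \)-weights \( e^{\pm\sprod{\alpha,x}} \) on \( \qfrak^u \)), or, as the paper does, quote Delcroix's formula for the \( K \)-invariant volume form and reduce the problem to identifying the single scalar function \( f \) by a descent argument.
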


\begin{proof}
Let \( dV_Y \) be the canonical section of \( Y \) (which always exists, cf. the remark that follows Defn. \ref{canonical_volume_definition}). Consider the following \( K \)-invariant volume form of \( G/H \) as constructed in \cite[Prop 2.6]{Del20b}: 
\[ dV_{\exp(x)H} :=  e^{ \sprod{-\varpi_Q,x}} \bigwedge_{1 \leq j \leq r} \frac{i}{2} \tuple{ \zfrak^{*}_{j} \wedge \ol{\zfrak^{*}_{j}} }_{\exp(x)H} \bigwedge_{\alpha \in \Phi_{P^u}} \frac{i}{2} \tuple{ \wfrak^{*}_{\alpha} \wedge \ol{\wfrak^{*}_{\alpha}}}_{\exp(x)H} \] 
Now for all \( (z_1, \dots, z_r) \in T_H \) and \( \alpha = (\alpha_1, \dots, \alpha_r) \in \Qbb^r \), let \( z^{\alpha} := z_1^{\alpha_1} \dots z_r^{\alpha_r} \) and \( x_j = \log \abs{z_j}^2 \). Consider the following \( K \)-invariant volume form:
\[ (dV'_Y)_{\exp(x)H} := \abs{z}^{2 \beta} dV_{\exp(x)H} \]
Let us prove that \( dV_Y \) restricts to \( dV'_Y \) on \( G/H \). Indeed, since \( dV \) is \( K \)-invariant, \( \Lcal_{-J\xi} dV = 0 \), hence:
\[ \Lcal_{-J\xi} dV'_Y = (\Lcal_{-J\xi} \abs{z}^{2 \beta} ) dV  \]
  By embedding equivariantly \( T_H \) into \((\Cbb^{*})^r \) with coordinates \( (z_1, \dots, z_r) \), one can identify \(-J \xi \) with 
\[ \sum_{j=1}^r \xi_j \Re( z_j \del_{z_j}) ,\; \xi_j > 0 \] 
where \( \xi_j \) is the weight of \( v_{\xi} = (- J \xi - \sqrt{-1} \xi)/2 \) on \( z_j \). By integration, one finds that the one-parameter right-action generated by \( - J\xi \) is 
\[ \phi_{t}^{-J\xi}(z_1, \dots, z_r) = (e^{-t\xi_1} z_1, \dots, e^{-t \xi_r} z_r) \]
A computation on \( T_H \) then yields: 
\begin{align*}
\Lcal_{-J\xi} \abs{z}^{2 \beta } &= \frac{d}{dt} \lvert_{t=0}  e^{-2 t \sum_{j=1}^r \beta_j  \xi_j} \prod_{j=1}^r \abs{z_j}^{2 \beta_j} \\    
&= 2 \sprod{-\beta, \xi} \abs{z}^{2\beta} 
\end{align*}
It follows that \( \Lcal_{-J \xi} dV'_Y = 2 \sprod{l, \xi} dV'_Y \), where \( l = - \beta \) as in Defn. \ref{canonical_volume_definition}. There exists thus a \( K \times (- J \xi) \)-invariant smooth non-vanishing function \( f \) on \( G/H \) such that \( dV_Y \lvert_{G/H} = f dV'_Y \). In particular, \( f\) is also \( \xi \)-invariant for all \( \xi \). Since \( (v_{\xi})_{\xi \in - \Ccal_Y} \) generate the action of \( T_H \), \(f\) is \( T_H \)-invariant and thus descends to a \( K \)-invariant function on the \( K \)-homogeneous manifold \(G/P \), whence \( f \) is constant. The canonical volume form restricted to \( G/H \) is therefore (up to a constant)
\[ (dV_Y)_{\exp(x)H} = \abs{z}^{2\beta} dV_{\exp(x)H} \] 
Since \( \abs{z}^{2 \beta} = e^{\sprod{\beta,x}} \), this completes our proof. 
\end{proof}

\subsubsection{Real Monge-Ampère equation}

Let $G/H \subset Y$ be a \( \Qbb \)-Gorenstein horospherical cone of dimension $n$, with colored cone $(\Ccal_Y,\Dcal_Y)$ and canonical volume form $dV_Y$, as constructed in the previous part. 

The Duistermaat-Heckman polynomial $P_{DH}$ associated to  $G/H$ is defined as 
\begin{equation*}
P_{DH}(p) = \prod_{\alpha \in \Phi_{P^u}} (\alpha, p)
\end{equation*}
for every $p \in \tfrak_{H,nc}^{*}$.  

\begin{lem}
Let \( \xi \) be a Reeb vector and \( \rho \) a \( \xi \)-conical potential on \( Y \). Then $\rho$ is uniquely determined by a smooth and strictly convex positive function $u_{\varpi} = \exp^{*}(\rho|_{T_H}) := u  + \varpi_Q|_{\tfrak_{H,nc}} : \tfrak_{H,nc} \to \Rbb $, satisfying
$$u_{\varpi}(x - t \xi) = e^t u_{\varpi}(x)$$
for all \( t \in \Rbb \). Here, \( u \) is a smooth, strictly convex funtion \( \tfrak_{H,nc}^{*} \to \Rbb \).
We shall call \( u_{\varpi} \) a \textit{\( \xi \)-horospherical cone potential} and \( u \) a \textit{\( \xi \)-toric cone potential}. 
\end{lem}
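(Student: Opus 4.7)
The plan is to translate each defining property of $\rho$ into a corresponding property of its pullback to $\tfrak_{H,nc}$, using the Iwasawa-type decomposition of $G/H$ recalled at the start of Section~\ref{CY} together with Delcroix's formula from Prop.~\ref{KahlerForm}.

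First I would reduce $\rho$ to a function on $\tfrak_{H,nc}$. By the remark opening Section~\ref{CY}, the slice $\exp(\tfrak_{H,nc})H = T_H$ is a fundamental domain for the $K$-action on $G/H$, and every $K$-invariant function on $G/H$ is automatically $T_{H,c}$-right-invariant. Hence $\rho|_{T_H}$ depends only on the log-radial coordinates, and pulling back by $\exp:\tfrak_{H,nc} \to T_H$ defines $u_\varpi: \tfrak_{H,nc} \to \Rbb_{>0}$, which fully determines $\rho$ on $G/H$. Positivity of $u_\varpi$ is immediate, and writing $u_\varpi = u + \varpi_Q|_{\tfrak_{H,nc}}$ is a purely notational shift by a linear function, designed to make the combinations $d_x u + \varpi_Q$ that appear in Prop.~\ref{KahlerForm} and Prop.~\ref{CanonicalVolumeForm} read as gradients of one single function. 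Convexity and smoothness of $u$ and $u_\varpi$ are equivalent since the two differ by a linear term.

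Next I would verify smoothness and strict convexity. Smoothness of $u_\varpi$ is inherited directly from smoothness of $\rho|_{G/H}$. For strict convexity, apply Prop.~\ref{KahlerForm} to $\omega = dd^c \rho$: restricted to $T_H$, its ``toric part'' has coefficient matrix $\tfrac{1}{4}\frac{\del^2 u}{\del x_{j_1} \del x_{j_2}}$ in the basis $\set{\zfrak^{*}_j}$. Strict positivity of $\omega$ forces this Hessian to be positive definite, which is precisely the strict convexity of $u$, equivalently of $u_\varpi$.

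Finally I would check the homogeneity. The right-action of $-J\xi \in \tfrak_{H,nc}$ on $T_H$ is, in the log-coordinates $x_j = \log\abs{z_j}^2$, the translation $x \mapsto x - 2t\xi$ (using the flow computation already performed in the proof of Prop.~\ref{CanonicalVolumeForm}). Pulling back the eigenvalue identity $\Lcal_{-J\xi}\rho = 2\rho$ yields $u_\varpi(x - 2t\xi) = e^{2t} u_\varpi(x)$, and reparametrizing $s = 2t$ gives the stated identity $u_\varpi(x - s\xi) = e^s u_\varpi(x)$. Uniqueness of $\rho$ from $u_\varpi$ is the Iwasawa reduction of the first step. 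I do not expect any substantive obstacle; the lemma is essentially a bookkeeping translation of the global analytic data of a $\xi$-conical potential into a single convex function on $\tfrak_{H,nc}$, and the only subtlety worth emphasizing is the automatic $T_{H,c}$-invariance that accompanies $K$-invariance, which is what makes the pullback by $\exp$ well-defined and the correspondence bijective.
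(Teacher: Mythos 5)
Your proposal is correct and follows essentially the same route as the paper: reduction to $T_H$ via the Iwasawa decomposition and $T_{H,c}$-invariance, strict convexity read off from the toric block of the Levi form (Prop.~\ref{KahlerForm}), and the homogeneity identity obtained by integrating $\Lcal_{-J\xi}\rho^2 = 2\rho^2$ along the flow $x \mapsto x - 2t\xi$. The only slip is cosmetic: the conical condition is imposed on $\rho^2$ (and $\omega = dd^c\rho^2$), not on $\rho$, so the eigenvalue identity you pull back should be the one for $\rho^2$.
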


\begin{proof}
By the Iwasawa decomposition and \( K \)-invariance, \( \rho \) is uniquely determined by its restriction on \( T_H \). Following \cite[Proposition 2.7]{Del20}, for all  $x \in \tfrak_{H,nc}$ :
\begin{equation*}
 \rho^2 |_{T_H} (\exp(x)H)) = u(x) + \varpi_Q (x) (:= u_{\varpi}(x)) 
\end{equation*}
hence $\Lcal_{-J\xi} \rho^2 = 2 \rho^2 $ if and only if $u_{\varpi}(x-t\xi) = e^t u_{\varpi}(x)$. It is clear that $\rho$ is positive and smooth over $G/H$ iff $u_{\varpi}$ is also positive and smooth on $T_H$. In particular, \( u \) is smooth. 

In the local coordinates $z$ of $T_H$, by \( T_{H,c} \)-invariance,  we have 
\[\rho^2 (z) = \rho^2 (\log \abs{z_1}^2, \dots, \log \abs{z_r}^2)\]
Since \( \rho^2 \) is smooth and strictly psh on \( T_H \), a direct computation of the Levi form shows that the function $u_{\varpi} = \rho^2 (\exp(.)H)$ is strictly convex. In particular, \( u \) is also strictly convex since \( \varpi_Q \) is linear.  
\end{proof}

\begin{prop}
Let \( \omega \) be a \( K \)-invariant Kähler cone current defined by a horospherical cone potential \( u_{\varpi} = u + \varpi_Q \) on \( \tfrak_{H,nc}^{*} \). 
The Calabi-Yau equation on \( G/H \) is equivalent to the real Monge-Ampère equation
\begin{equation} \label{RealMongeAmpere}
\det(d^2 u) \prod_{\alpha \in \Phi_{P^u}} (\alpha, d u + \varpi_Q) = C e^{\beta }
\end{equation}
where \( C \) is a constant depending only on the dimension $n$. Moreover, $u$ is a solution of (\ref{RealMongeAmpere}) iff $u_{\varpi} = u + \varpi_Q$ is a solution of
\begin{equation} \label{RealMongeAmpere2}
\det(d^2 u_{\varpi}) P(d u_{\varpi}) = e^{\beta}
\end{equation}
\end{prop}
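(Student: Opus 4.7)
The strategy is to reduce the Calabi-Yau equation $\omega^n = dV_Y$ on $G/H$ to a pointwise identity on the abelian slice $\tfrak_{H,nc}$. Both $\omega^n$ and $dV_Y$ are $K$-invariant top forms on $G/H$, and by the Iwasawa decomposition $G/H = K \cdot T_H$ recalled at the start of Section \ref{CY}, any such form is uniquely determined by its restriction to $T_H$. It therefore suffices to equate the coefficients of $\omega^n$ and $dV_Y$ read off the common frame
$$ \bigwedge_{1 \leq j \leq r} \tfrac{i}{2} \, \zfrak^{*}_j \wedge \ol{\zfrak_j^{*}} \, \bigwedge_{\alpha \in \Phi_{P^u}} \tfrac{i}{2} \, \wfrak^{*}_\alpha \wedge \ol{\wfrak_\alpha^{*}} $$
at each point $\exp(x) H \in T_H$, $x \in \tfrak_{H,nc}$.

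Plugging in the explicit expressions from Propositions \ref{KahlerForm} and \ref{CanonicalVolumeForm}, both coefficients carry a common factor $e^{-\sprod{\varpi_Q, x}}$: on the Kähler side it comes from the root weighting of $\omega$ along the $\pfrak^u$-fibers, on the volume side it comes from the canonical weight in the explicit formula for $dV_Y$. That these two unrelated-looking factors match exactly is the crucial feature of the horospherical setting. After cancellation one is left with
$$ \frac{n!}{2^r} \det(d^2_x u) \prod_{\alpha \in \Phi_{P^u}} (\alpha, d_x u + \varpi_Q) = e^{\sprod{\beta, x}}, $$
which is equation (\ref{RealMongeAmpere}) with the dimensional constant $C = 2^r / n!$.

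The passage from (\ref{RealMongeAmpere}) to (\ref{RealMongeAmpere2}) is purely algebraic and rests on the fact that $\varpi_Q$ is \emph{linear} on $\tfrak_{H,nc}$: therefore $d^2(u + \varpi_Q) = d^2 u$ and $d(u + \varpi_Q) = du + \varpi_Q$ pointwise, so that $P(d u_\varpi) = \prod_{\alpha \in \Phi_{P^u}} (\alpha, du + \varpi_Q)$. Substituting into (\ref{RealMongeAmpere}) shows that the two left-hand sides agree; the two equations then differ only by the multiplicative constant $C$. Since $Y$ is a nontrivial $\Qbb$-Gorenstein cone, $\beta$ is a nonzero linear form, and $C$ can be absorbed by the translation $x \mapsto x + x_0$ with $\sprod{\beta, x_0} = -\log C$, which preserves both strict convexity and the cone homogeneity condition $u_\varpi(x - t\xi) = e^t u_\varpi(x)$. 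The argument is essentially bookkeeping; the only conceptual point worth highlighting is the cancellation of $e^{-\sprod{\varpi_Q, x}}$ on both sides, which is precisely what reduces the complex Calabi-Yau equation to a clean real Monge-Ampère equation amenable to the variational techniques applied in the sequel.
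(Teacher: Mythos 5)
Your argument is correct and follows the paper's own proof essentially verbatim: restrict to \( T_H \) by \( K \)-invariance, equate the coefficients of \( \omega^n \) and \( dV_Y \) from Propositions \ref{KahlerForm} and \ref{CanonicalVolumeForm} (the common factor \( e^{-\sprod{\varpi_Q, x}} \) cancels, giving \( C = 2^r/n! \)), and observe that adding the linear function \( \varpi_Q \) changes neither the Hessian nor the real Monge--Ampère operator, so the two equations agree up to the constant \( C \). The only cosmetic difference is how \( C \) is normalized away: you translate the variable in a direction where \( \beta \) is nonzero, whereas the paper rescales and invokes the homogeneity of \( P_{DH} \); both devices are standard and valid here.
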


\begin{rmk}
By rescaling of variables and using homogeneity of \(P_{DH}\), we see that solving (\ref{RealMongeAmpere2}) is equivalent to solving 
\[ \det(d^2 v) P_{DH}(d v) = C e^{\lambda \sprod{\beta,x}} \] 
for all real positive \( \lambda \) and some positive constant \( C > 0 \) depending only on \( \lambda \).
\end{rmk}

\begin{proof}
Indeed, using the expressions of the curvature and volume forms as in Prop. \ref{KahlerForm} and \ref{CanonicalVolumeForm}, we have 
\begin{equation*}
\omega^n = dV_Y \iff  n! \det(d^2 u) \prod_{\alpha \in \Phi_{P^u}}(\alpha, d u + \varpi_Q) = 2^{r} e^{\beta}
\end{equation*}
Since the real Monge-Ampère operator (in Alexandrov's sense) is invariant under translation by an affine function, we obtain the equivalence between ($\ref{RealMongeAmpere}$) for $u$ and equation ($\ref{RealMongeAmpere2}$) for $u_{\varpi}$ up to a positive constant \( C \). Since \( C \) depends only on \( G/H \), rescaling by a positive constant and using homogeneity of \( P_{DH} \), we can suppose that \( C = 1 \). 
\end{proof}


\subsection{Proof of Theorem \ref{main_theorem_volume_minimization}}
Let \( (\Ccal_Y, \Dcal_Y) \) be the colored cone of \( Y \). Since the support of \( \Ccal_Y \) contains \( \rho(\Dcal) \), the dual cone \( \Ccal_Y^{\vee} \) is then contained in 
\[ \set{p \in \Mcal_{\Rbb}, \sprod{p, \rho(\Dcal)}} \geq 0 \] 
It follows that 
\[ P_{DH}|_{\Ccal_Y^{\vee}} \geq 0, \quad  P_{DH}|_{\text{Int}(\Ccal^{\vee}_Y)} > 0 \]
Recall that \( \beta \) is the linear function on \( \Ccal_Y \) corresponding to the \( \Qbb \)-Cartier divisor \( \Kcal_Y \). In what follows, \( d \lambda \) will denote the Lebesgue measure on an appropriate affine space (which will be clear in the context). 

\begin{rmk}
To simplify our exposition and to avoid confusion due to \( T_H \) acting in two different ways, we will work on the cone \( \Ccal_Y \) of \textit{opposite Reeb vectors} and consider the \textit{normalized opposite Reeb vectors} defined by 
\[ \sprod{\beta, \xi_{-} } = 1, \; \xi_{-} \in \Ccal_Y \]
Note that the conical condition of \( u_{\varpi} \) under \( - J \xi \), where \( \xi \in -\Ccal_Y \), then becomes 
\[ u_{\varpi} (x + t \xi_{-})=e^t u_{\varpi} (x), \; \text{for} \; \xi_{-} \in \Ccal_Y \] 
\end{rmk}
Let \( \sprod{.,.} \) be the natural bilinear form on \( \Mcal \times \Ncal \), and \(( \delta_1, \dots, \delta_{r-1}, \beta) \) a basis of \( \Mcal \) such that : 
\[ \sprod{\delta_j, \xi_{-}} = 0, \; 1 \leq j \leq r-1\] 
We will denote as in Hamiltonian mechanics the coordinates of \( (\Ncal,\Mcal) \) by \( (x,p) \) respectively. 
Consider the polytopes
\[\Delta_{\xi} = \set{p \in \Ccal^{\vee}_Y ,  \sprod{p,\xi_{-} } = 1} , \quad \Delta_{\xi}^0 := \Delta_{\xi} - \beta
\]
with Duistermaat-Heckman barycenters 
$\bar_{DH}( \Delta_{\xi})$ and $\bar_{DH}( \Delta^0_{\xi})$
Let
\[ \phi_{ \Delta^{0}_{\xi}}(x) := \sup_{p \in \Delta_{\xi}^0} \sprod{x,p}, \quad \phi_{\Delta_{\xi}} = \phi_{\Delta^0_{\xi}} + \beta \] 
be the support function of the polytopes \( \Delta_{\xi}^0 \) and \( \Delta_{\xi} \).

\begin{defn}The \( P_{DH} \)-weighted volume function, or \textit{Duistermaat-Heckman volume}, is defined over the normalized opposite Reeb vectors as: 
$$\xi_{-} \to \vol_{DH} (\xi_{-} ) := \vol_{DH} (\Ccal^{\vee}_Y \cap \set{\sprod{\xi_{-},. } \leq 1})$$
\end{defn}

\begin{lem}
The \textit{\( P_{DH} \)-weighted volume function} has the following global expression: 
\[ n! \vol_{DH} (\xi_{-} ) =  \int_{\Ccal^{\vee}_Y} e^{- \sprod{\xi_{-} ,p}} P_{DH} (p) d \lambda (p), \]
In particular, \( \vol_{DH} \) is smooth and strictly convex, and its minimum is attained in \( \text{Int}(\Ccal_Y) \).
\end{lem}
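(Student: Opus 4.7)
My plan is to establish the integral formula by a coarea-type cone parametrization, then derive smoothness, strict convexity, and properness directly from the integral representation (which realizes $\vol_{DH}$ as, up to a factor, the Laplace transform of a compactly-supported-in-direction positive measure).

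First I would parametrize the cone $\Ccal_Y^\vee$ (of dimension $r$) via $p = sq$ with $s \in (0,\infty)$ and $q$ lying on the slice $S := \Ccal_Y^\vee \cap \set{\sprod{\xi_-, \cdot} = 1}$. Under this change of variables the Lebesgue measure factors as $d\lambda(p) = s^{r-1}\,ds\,d\sigma(q)$ for a measure $d\sigma$ on $S$. Since $P_{DH}$ is a product of $\abs{\Phi_{P^u}} = n - r$ linear forms, it is homogeneous of degree $n-r$, so
\[
\int_{\Ccal_Y^\vee} e^{-\sprod{\xi_-,p}} P_{DH}(p)\,d\lambda(p) = \int_0^\infty e^{-s} s^{n-1}\,ds \cdot \int_S P_{DH}(q)\,d\sigma(q) = (n-1)! \int_S P_{DH}(q)\,d\sigma(q).
\]
Applying the same parametrization to the definition $\vol_{DH}(\xi_-) = \int_{\Ccal_Y^\vee \cap \set{\sprod{\xi_-,\cdot}\leq 1}} P_{DH}(p)\,d\lambda(p)$ gives $\vol_{DH}(\xi_-) = \frac{1}{n}\int_S P_{DH}(q)\,d\sigma(q)$, and the formula follows.

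Smoothness on $\text{Int}(\Ccal_Y)$ is then immediate from differentiation under the integral sign, justified by dominated convergence on compact subsets (where $\sprod{\xi_-, p}$ is bounded below by a linear function of $\abs{p}$ on $\Ccal_Y^\vee$). Strict convexity I would obtain by direct Hessian computation: writing $F(\xi_-) = n!\vol_{DH}(\xi_-)$,
\[
\mathrm{Hess}(F)(\xi_-)(v,v) = \int_{\Ccal_Y^\vee} \sprod{v,p}^2 e^{-\sprod{\xi_-,p}} P_{DH}(p)\,d\lambda(p) \geq 0,
\]
with equality only if $\sprod{v, p} = 0$ for almost every $p$ in $\text{Int}(\Ccal_Y^\vee)$, which is open and nonempty (since $\Ccal_Y$ is strictly convex and full-dimensional, so is its dual). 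This forces $v = 0$.

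For the last assertion, the main work is to establish properness of $\vol_{DH}$ on the normalized slice $\set{\sprod{\beta,\cdot}=1} \cap \text{Int}(\Ccal_Y)$. If $\xi_-^{(k)} \to \xi_\infty \in \partial\Ccal_Y$, then by duality there exists $p_0 \in \Ccal_Y^\vee \setminus \set{0}$ with $\sprod{\xi_\infty, p_0} = 0$. The idea is to pick $p_1 \in \text{Int}(\Ccal_Y^\vee)$, consider the translated ray $p_0 + \varepsilon p_1$ with a small thickening contained in $\text{Int}(\Ccal_Y^\vee)$, and use the integral formula: on this region $P_{DH}$ is bounded below by a positive constant times an appropriate polynomial, while $\sprod{\xi_-^{(k)}, \cdot}$ tends to a form that vanishes on the ray through $p_0$. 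A one-dimensional Laplace-type estimate of the form $\int_0^\infty t^{n-1} e^{-\varepsilon c t}\,dt = (n-1)!/(\varepsilon c)^n$ then shows the integral diverges, giving $\vol_{DH}(\xi_-^{(k)}) \to \infty$. Combined with continuity and strict convexity, this yields the existence and uniqueness of a minimum in $\text{Int}(\Ccal_Y)$. The delicate step, which I expect to be the main obstacle, is this properness/boundary blow-up estimate: one has to be careful because $p_0$ may itself lie in $\partial \Ccal_Y^\vee$ where $P_{DH}$ vanishes, so the ray must be genuinely perturbed into the interior before integrating.
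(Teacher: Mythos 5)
Your proof is correct and follows essentially the same route as the paper: the integral formula via homogeneity of $P_{DH}$ (your polar decomposition $p=sq$ is the change-of-variables version of the paper's layer-cake computation), convexity from the integral representation, and blow-up of the volume as $\xi_-$ approaches $\partial\Ccal_Y$ by exhibiting an unbounded region on which $P_{DH}$ is bounded below. If anything your write-up is more complete than the paper's, which only argues continuity and convexity and does not spell out the differentiation under the integral sign, the Hessian computation for strict convexity, or the need to perturb the unbounded ray into $\text{Int}(\Ccal_Y^\vee)$ where $P_{DH}$ is positive.
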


\begin{proof}
Let \( g := P_{DH} \) and \( \Ccal := \Ccal_Y \). By the \((n-r)\)-homogeneity of \( g\):  
\[ \int_{\set{\sprod{\xi_{-}, .} <  s}} 1_{\Ccal^{\vee}} g(p)d \lambda (p) = s^{n} \int_{\set{\sprod{\xi_{-},.} <  1}} 1_{\Ccal^{\vee}} g(p) d\lambda (p) \] 
From this, we have: 
\begin{align*}
 \int_{\Ccal^{\vee}} e^{-\sprod{\xi_{-},p}} g(p) d \lambda (p) &= \int_{\set{s > 0}} e^{-s} \frac{d}{ds} \tuple{ \int_{\set{\sprod{\xi_{-},.} < s}} 1_{\Ccal^{\vee}} g(p) d \lambda (p)} ds  \\
 &= \int_{\set{s > 0}} n e^{-s} s^{n-1} \vol_g(\xi_{-}) ds = n! \vol_g(\xi_{-}) 
\end{align*} 
It follows that \( \vol_g \) is continuous and convex as the average by the positive measure \( g d \lambda \) of the continuous and convex function \( p \to e^{-\sprod{p,\xi_{-}}}  \). To show that \( \vol_g \) attains its minimum in the interior of the cone, it is enough to show that \( \vol_g (\xi_{-} ) \to \infty \) when \( \xi_{-} \to \xi_0 \in \del \Ccal \). Indeed, since \( \xi_0 \in  (\Ccal^{*})^{\perp} \) and that  \( \xi_{-} \) is normalized, there exist \( p_0, p_1 \in \Ccal^{*} \) such that \( \sprod{\xi_0, p_0} = 0, \sprod{\xi_0,p_1} = 1 \). It follows that the polyhedron
\( P_{\xi_0} := \set{\set{\xi_0,.} = 1} \)
is not bounded since it contains \( p_1 + c p_0, \; \forall c > 0 \), hence \( \vol_g(\xi_0) \geq \int_{P_{\xi_0}} g d \lambda   = \infty \), which finishes our proof. 
\end{proof}

Since we want the conical condition \( u_{\varpi} (x+t \xi_{-}) = e^t u_{\varpi} (x) \) for all \( \xi_{-} \in \Ccal_Y \), it is natural to impose that \( d u_{\varpi}(\Rbb^r) = \text{Int}(\Ccal_Y^{\vee}) \), or a relatively weaker one as follows. 

\begin{prop} \label{prop_volume_minimization_equivalent_existence}
Let \( \xi \in -\Ccal_Y \) be a Reeb vector and \( \xi_{-} \) its opposite. The following assertions are equivalent
\begin{enumerate}
    \item  There exists uniquely up to translation a strictly convex smooth function $u: \Rbb^r \to \Rbb$ that satisfies 
\( u_{\varpi} (x + t \xi_{-} ) = e^{t} u_{\varpi}(x) \) and
    \begin{equation} \label{conical_CY}
    P_{DH}(d u_{\varpi} ) \det( d^2 u_{\varpi}) = e^{\beta }, \; \overline{d u_{\varpi}(\Rbb^r)} = \Ccal^{\vee}_Y, 
    \end{equation}
    \item $\bar_{DH}(\Delta_{\xi}) = \beta$.
    \item  The normalized opposite Reeb vector \( \xi_{-} \) is the unique minimizer of the volume \( \vol_{DH} \). 
\end{enumerate}
\end{prop}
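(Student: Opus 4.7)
The plan is to prove the three conditions in a cycle: (2) $\Leftrightarrow$ (3) by direct convex analysis, (1) $\Rightarrow$ (2) by a change of variables in the Monge-Ampère equation, and (2) $\Rightarrow$ (1) by a variational method à la Berman-Berndtsson. The main obstacle is clearly the last direction, for which an existence theory must be developed on the non-compact space $\Rbb^r$.

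For (2) $\Leftrightarrow$ (3), I would differentiate the integral representation
\[ n!\,\vol_{DH}(\xi_{-}) = \int_{\Ccal^{\vee}_Y} e^{-\langle \xi_{-}, p\rangle} P_{DH}(p)\, d\lambda(p) \]
along a vector $v$ tangent to the normalization hyperplane $\{\langle \beta,\cdot\rangle = 1\}$. Slicing $\Ccal_Y^{\vee}$ along the rays $p = sq$, $q \in \Delta_{\xi}$, $s > 0$, and using the homogeneity of $P_{DH}$ together with $\int_0^\infty s^n e^{-s}\, ds = (n+1)!$, the directional derivative reduces to a positive multiple of $\langle v,\, \beta - \bar_{DH}(\Delta_{\xi})\rangle$. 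Vanishing for all admissible $v$ forces $\bar_{DH}(\Delta_{\xi})$ to be proportional to $\beta$, and the relation $\langle \xi_{-}, \bar_{DH}(\Delta_{\xi})\rangle = 1 = \langle \xi_{-}, \beta\rangle$ upgrades this to an equality. Strict convexity of $\vol_{DH}$, already established in the preceding lemma, guarantees uniqueness of the minimizer.

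For (1) $\Rightarrow$ (2), I would start from a solution $u_{\varpi}$. The map $du_{\varpi}: \Rbb^r \to \text{Int}(\Ccal_Y^{\vee})$ is a diffeomorphism thanks to strict convexity and the image condition, and the Monge-Ampère equation then yields the pushforward identity $(du_{\varpi})_{*}(e^{\beta(x)}dx) = P_{DH}(p)\,dp$ on $\Ccal_Y^{\vee}$. The conical identity $\langle \xi_{-}, du_{\varpi}(x)\rangle = u_{\varpi}(x)$, obtained by differentiating $u_{\varpi}(x+t\xi_{-}) = e^t u_{\varpi}(x)$ at $t=0$, then lets me recast the weighted volume and first moment of $\Delta_{\xi}$ as $\Rbb^r$-integrals involving $e^{-u_{\varpi}+\beta}$ and $du_{\varpi}\,e^{-u_{\varpi}+\beta}$. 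Testing against linear forms annihilating $\beta$ and exploiting conicality in the direction $\xi_{-}$, the identity $\bar_{DH}(\Delta_{\xi}) = \beta$ drops out.

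The converse (2) $\Rightarrow$ (1) is the heart of the argument. Following Berman-Berndtsson, I would introduce a Ding-type functional on the space of $\xi$-conical convex potentials, of the schematic form
\[ F(u_{\varpi}) = E(u_{\varpi}) - \log \int_{\Rbb^r} e^{-u_{\varpi}(x) + \beta(x)}\, dx, \]
where the energy $E$ is chosen so that the Euler-Lagrange equation of $F$ coincides with \eqref{conical_CY}. The residual symmetry group of the problem consists of translations along $\ker \beta$, and modulo this group the barycenter condition $\bar_{DH}(\Delta_{\xi}) = \beta$ turns out to be precisely the coercivity condition for $F$: it ensures that the logarithmic term is controlled by the energy and prevents minimizing sequences from escaping to infinity. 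Establishing this coercivity is the main technical step and relies on a Prékopa-type convexity inequality together with sharp control on $P_{DH}$ near the boundary of $\Delta_{\xi}$. Once a minimizer in the appropriate finite-energy class is produced, interior regularity for real Monge-Ampère equations with smooth positive right-hand side yields the sought smooth solution, and uniqueness up to translation follows from strict convexity of $F$ along suitable linear interpolations of the potentials.
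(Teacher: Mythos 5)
Your overall strategy is sound and, for the easy parts, coincides with the paper's: the equivalence (ii)$\Leftrightarrow$(iii) is proved in the paper exactly as you propose, by differentiating $n!\vol_{DH}(\xi_-)=\int_{\Ccal_Y^\vee}e^{-\sprod{\xi_-,p}}P_{DH}(p)\,d\lambda$ and slicing along rays to identify $-d\log\vol_{DH}$ with $\bar_{DH}(\Delta_\xi)$ (minor slip: $\int_0^\infty s^ne^{-s}ds=n!$, not $(n+1)!$; it cancels in the barycenter ratio anyway). Where you genuinely diverge is in the hard equivalence (i)$\Leftrightarrow$(ii). The paper's key move, which you do not make, is the substitution $u_\varpi=e^{\phi+\beta}$: since $\phi$ is invariant under translation by $\xi_-$, it descends to a function on $\Rbb^{r-1}$, and \eqref{conical_CY} becomes the Kähler--Einstein-type equation $\det(d^2_\delta\phi)\,P_{DH}(d_\delta\phi+\beta)=e^{-n\phi}$ with gradient image the \emph{compact} polytope $\Delta_\xi^0$. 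Both directions of (i)$\Leftrightarrow$(ii) then follow at once from Thm.~\ref{realMA_barycenter}, which is the Berman--Berndtsson theorem adapted to weights vanishing on $\partial\Delta$. You instead propose to run the Ding-functional argument directly on conical potentials in $\Rbb^r$, and to prove (i)$\Rightarrow$(ii) separately via the pushforward identity. Both are feasible (the latter is a clean alternative, provided you disintegrate along the $\xi_-$-flow, since $e^{\beta}dx$ and $P_{DH}\,d\lambda$ have infinite total mass on the respective cones so the moment computation only makes sense on the slice $\Delta_\xi$), but the direct-on-the-cone variational setup costs more than you suggest.

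Two caveats on that setup. First, the functional $F(u_\varpi)=E(u_\varpi)-\log\int e^{-u_\varpi+\beta}dx$ does not have \eqref{conical_CY} as its Euler--Lagrange equation: fibering over the $\xi_-$-direction, $\int_{\Rbb^r}e^{-u_\varpi+\beta}dx$ reduces to $\int_{\Rbb^{r-1}}e^{-\phi}$, whose variation produces the density $e^{-\phi}$ rather than the required $e^{-n\phi}$; you need (schematically) $-\tfrac1n\log\int e^{-n\phi}$, i.e.\ a different power of $u_\varpi$ in the exponent. You flag that $E$ and the log term are ``chosen so that'' the equation comes out, but this choice is precisely where the reduction to the transverse picture becomes unavoidable, and at that point you have reproduced the paper's argument. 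Second, coercivity in Berman--Berndtsson holds unconditionally; what the barycenter condition gives is boundedness/translation-invariance of the Ding functional, which combined with coercivity yields a maximizer. Saying the barycenter condition ``is'' the coercivity condition is accurate only if coercivity is understood modulo the translation group, and that equivalence is itself a nontrivial step ($\bar_{DH}(\Delta_\xi)=\beta$ iff $\Dcal_g$ is bounded above) that your sketch should acknowledge. With these two points repaired, your plan closes; the regularity and uniqueness-up-to-translation endgame is handled in the paper by Lem.~\ref{lemma_smooth_weight_implies_smooth_solution} (Caffarelli plus Evans--Krylov, using that $\partial\phi(\ol{\Omega}_R)$ is compactly contained in $\text{Int}(\Delta)$ where $P_{DH}>0$) and by strict concavity of the Ding functional along geodesics, matching your last two sentences.
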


\begin{proof}
Let \( v := u_{\varpi}, g := P_{DH}\) and \(\Ccal := \Ccal_Y \). Set \( v = e^{\psi} = e^{\phi + \beta} \). The asymptotic condition \( \ol{d v (\Rbb^r)} = \Ccal^{\vee} \) and the fact that \( v(x+t\xi_{-} ) =e^t v(x) \) gives \(d \psi(\Rbb^r) \subset \text{Int}(\Ccal^{\vee}) \) and \( \sprod{d \psi(.), \xi_{-} } = 1 \), hence
\[ \ol{d \psi(\Rbb^r)} = \Delta_{\xi}, \quad \ol{ d \phi(\Rbb^{r-1})} = \Delta_{\xi}^0 \]
It is immediate to check that the function \( \phi \) is strictly convex, smooth and satisfies as a function on \( \Rbb^r \): 
\begin{equation*}
\phi(x + t \xi_{-} ) = \phi(x) + t - t \sprod{\beta, \xi_{-}} = \phi(x)
\end{equation*}
by the normalization condition on \( \xi_{-} \). It follows that \(\phi\) depends only on the coordinates \( \delta\). 
A straightforward but tedious computation gives us: 
\begin{equation*}
\det(d^2 v) = \det( d_{\delta, \beta }^2 (e^{\phi + \beta}) ) =  e^{(r \beta + r \phi)}  \det(d_{\delta}^2 \phi)  
\end{equation*}
and
\begin{align*}
g(d v ) &= g( d_{\delta, \beta}  e^{\phi + \beta}) \\
&= e^{(n-r)(\phi + \beta)} g( d_{\delta} \phi + \beta )
\end{align*}
We then obtain the following equivalence under the conical condition
\begin{align*}
\det(d^2 v ) g(dv) &= e^{ n \beta }, \quad \ol{dv(\Rbb^r)} = \Ccal^{\vee} \iff \\
\det( d^2_{\delta} \phi) g( d_{\delta} \phi + \beta) & = e^{- n \phi}, \quad \ol{d \phi(\Rbb^{r-1})} = \Delta_{\xi}^0
\end{align*}
Since \(\bar_g(\Delta_{\xi}^0) = 0 \) iff \( \bar_g(\Delta_{\xi}) = \beta\), the equivalence between \( (i) \) and \( (ii) \) is a direct consequence of Thm. \ref{realMA_barycenter}. 

For the equivalence between \( (ii) \) and \( (iii) \), first remark that: 
\[ -d_{\xi} \log (n! \vol_g(\xi_{-})) = \frac{\int_{\Ccal^{\vee}}  p e^{-\sprod{\xi_{-},p}} g(p)d \lambda (p) }{\int_{\Ccal^{\vee}} e^{-\sprod{\xi_{-}, p}} g(p) d \lambda (p) }  =  \bar_g(\Delta_{\xi}) \]
 Indeed: 
\begin{align*}
 \int_{\Ccal^{\vee}} p e^{-\sprod{\xi_{-},p}} g(p) d \lambda (p) &= \int_{\Ccal^{\vee} \cap \sprod{\sprod{\xi_{-},.} = s} } \int_{\set{s > 0}} p e^{-s} g(p) d \lambda  (p) ds  \\ 
 &= \int_{\Ccal^{\vee} \cap \sprod{\sprod{\xi_{-},.} = 1}} p g(p) d \lambda (p) \int_{\set{s > 0}} s^n e^{-s} ds  
\end{align*}
By the same token, 
\[ \int_{\Ccal^{\vee}} e^{-\sprod{\xi_{-},p}} g(p) d \lambda (p) = \int_{\Ccal^{\vee} \cap \set{\sprod{\xi_{-},.} = 1}} g(p) d \lambda (p) \int_{\set{s > 0}} s^n e^{-s} ds \]  
Since \( \xi_{-} \to - \log n! \vol_g (\xi_{-}) \) is a convex function, going to \( \infty \) when \( \xi_{-} \to \xi_0 \in \del \Ccal \), its unique minimizer belongs to \( \Ccal \) and coincides with the critical point. As a consequence, \( \xi_{-} \) is the unique minimizer of \( \vol_g \) if and only if \( \bar_g(\Delta_{\xi}) = \beta\). 
\end{proof}

\begin{prop} \label{prop_existence_open_orbit_equivalent_global_existence}
The following are equivalent:
\begin{itemize}
    \item[1)] The polarized horospherical cone \( (Y,\xi) \) admits a unique \( K \)-invariant conical Calabi-Yau metric.
    \item[2)]  There exists uniquely up to translation a strictly convex smooth function $u : \Rbb^r \to \Rbb$ that satisfies 
\( u_{\varpi} (x + t \xi_{-} ) = e^{t} u_{\varpi}(x) \) and
    $$P_{DH}(d u_{\varpi} ) \det( d^2 u_{\varpi}) = e^{\beta }, \; \overline{d u_{\varpi}(\Rbb^r)} = \Ccal^{\vee}_Y, $$
\end{itemize}
\end{prop}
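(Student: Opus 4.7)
The plan is to prove both directions by passing between the global geometry of $Y$ and the toric-style data on $\tfrak_{H,nc} \simeq \Rbb^r$, controlling the passage from the open orbit to the whole cone through a $C^0$-estimate together with the regularity result of the companion paper \cite{N22}.

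The direction $(1) \Rightarrow (2)$ should be essentially a restriction argument. Starting from a $K$-invariant conical Calabi-Yau potential $\rho^2$ on $Y$, I would form $u_{\varpi} := \rho^2 \circ \exp$ on $\tfrak_{H,nc}$ and set $u := u_{\varpi} - \varpi_Q|_{\tfrak_{H,nc}}$. The preceding lemmas already give smoothness, strict convexity, and the conical relation $u_{\varpi}(x+t\xi_{-}) = e^t u_{\varpi}(x)$; combining Prop. \ref{KahlerForm} with Prop. \ref{CanonicalVolumeForm}, the Calabi-Yau equation on $G/H$ translates into the real Monge-Amp\`ere equation (\ref{RealMongeAmpere2}). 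The asymptotic condition $\overline{d u_{\varpi}(\Rbb^r)} = \Ccal_Y^\vee$ would then follow from the Luna-Vust orbit-face correspondence: if the closure were strictly smaller, $\rho^2$ would blow up along some boundary $G$-orbit, contradicting local boundedness, while strictly larger is ruled out by the affineness criterion of Prop. \ref{affine_embedding_criterion}.

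The harder direction $(2) \Rightarrow (1)$ proceeds in two steps: first, build $\rho^2$ on $G/H$; second, extend it across $Y$. For the first step, $u_{\varpi}$ produces via Iwasawa a $K$-invariant $\xi$-radial function $\rho^2$ on $G/H$, and Prop. \ref{KahlerForm} combined with Prop. \ref{CanonicalVolumeForm} yields $(dd^c \rho^2)^n = dV_Y$ on the open orbit. The second step is the heart of the argument and requires a $C^0$-estimate. By convex duality, the asymptotic condition $\overline{d u_{\varpi}(\Rbb^r)} = \Ccal_Y^\vee$ means that the Legendre transform of $u_{\varpi}$ is defined on all of $\Ccal_Y^\vee$; combined with the conical growth, this controls $u_{\varpi}$ at infinity by the support function of $\Delta_\xi$. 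Pulling this bound back through the exponential map and using the orbit-face correspondence, $\rho^2$ stays locally bounded as one approaches each boundary $G$-orbit. Bedford-Taylor theory then produces a K\"ahler cone current $\omega = dd^c \rho^2$ on $Y$, and smoothness of $\rho^2$ on $Y_{\mathrm{reg}}$ is supplied by \cite{N22}.

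The main obstacle I anticipate is the $C^0$-estimate near non-isolated strata of $Y \setminus G/H$. In the toric case handled by Berman \cite{Ber20}, the moment polytope directly controls boundary behaviour, but here the colored data interacts non-trivially with the valuation cone: matching the growth of $u_{\varpi}$ along a ray of $\Rbb^r$ with the local geometry of the orbit closure corresponding to a face of $\Ccal_Y$ will require the $\Qbb$-Gorenstein normalization encoded by $\beta$ in Prop. \ref{q_gorenstein_condition}, used to convert the behaviour of $u_{\varpi}$ near infinity into the local Bedford-Taylor boundedness near the boundary orbit. Uniqueness transfers directly: two $K$-invariant conical Calabi-Yau potentials on $Y$ restrict to toric potentials differing only by translation, which is precisely the ambiguity present on the side of (2).
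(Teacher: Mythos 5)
Your overall architecture coincides with the paper's: $(1)\Rightarrow(2)$ by restriction to $T_H$ (the paper simply calls it obvious), and $(2)\Rightarrow(1)$ by producing $\rho^2$ on $G/H$ from $u_\varpi$, bounding it by the support function $\exp(\phi_{\Delta_\xi})$ via the $C^0$-estimate of Thm.~\ref{realMA_barycenter}, extending across $Y$ by Bedford--Taylor, and invoking \cite{N22} for smoothness on $Y_{\mathrm{reg}}$. The difference is that the one step you yourself flag as ``the main obstacle I anticipate'' is exactly the step that constitutes the content of the paper's proof, and you do not resolve it. It is not enough to know that $u_\varpi \le \exp(\phi_{\Delta_\xi})$ as a function on $\tfrak_{H,nc}$: one must show that this majorant, pulled back by $\mathrm{Log}$, extends to a \emph{continuous} function on all of $Y$ (or at least that $\rho^2$ stays locally bounded near every boundary $G$-orbit), and your proposed mechanism for this --- ``the $\Qbb$-Gorenstein normalization encoded by $\beta$'' converting growth at infinity into Bedford--Taylor boundedness --- is not the right tool; $\beta$ plays no role in this part of the argument.

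What the paper actually does is: pass to the decoloration $d:\wt{Y}\to Y$ of Prop.~\ref{prop_spherical_dominated_by_toroidal}, identify the closure of $T_H$ with a toric cone $Z$ whose fan is $-\Ccal_Y$ with colors removed, and then work purely torically. By Kakutani/Minkowski, $\phi_{\Delta_\xi^0} = \max_i \langle x, p_i\rangle$ over the finitely many vertices $p_i$ of $\Delta_\xi^0$; writing each $p_i$ as a nonnegative combination $\sum_\ell a_\ell n_{i_\ell}$ of lattice generators of $\Ccal_Z^\vee$ and using the monomial embedding $z\mapsto (z^{n_1},\dots,z^{n_N})$ of $Z$ into $\Cbb^N$, the function $\mathrm{Log}^*\exp\langle x,p_i\rangle$ becomes $\abs{Z_{i_1}}^{2a_1}\cdots\abs{Z_{i_L}}^{2a_L}$, which is manifestly continuous on $\Cbb^N$, hence on $Z$, hence gives local boundedness of $\rho^2$ on $Y$. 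Without this (or an equivalent) explicit extension argument, the passage from a solution on the open orbit to a Kähler cone current on all of $Y$ is a genuine gap in your proof. As a secondary point, your appeal to the orbit--face correspondence in $(1)\Rightarrow(2)$ to pin down $\overline{du_\varpi(\Rbb^r)}=\Ccal_Y^\vee$ is heuristically reasonable but would also need the same toric dictionary to be made precise.
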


\begin{proof}

The implication \((1) \Rightarrow (2) \) is obvious. 

$(2) \Rightarrow (1)$. From the assumption, the function \( u_{\varpi} \) corresponds to a smooth conical Calabi-Yau potential over \( G/H \). Let us show that it can be extended to a locally bounded function over \( Y \). First, by Prop. \ref{prop_spherical_dominated_by_toroidal} there exists a proper and birational morphism \( d \) and a toroidal horospherical variety \( \wt{Y} \) such that
$$d : \wt{Y} \to Y$$ 
The torus $T_H$ is identified under pullback by this morphism to the open dense orbit of a toric variety $Z$ whose fan $\Ccal_Z$ (in the toric sense) corresponds exactly to the opposite of $\Ccal_Y$ with its colors removed. In particular $Z$ is a toric cone by our criterion \ref{spherical_cone_criterion}. The toric cone potential $u$ then corresponds to a potential, still denoted by $u$, over the open dense orbit $T_H \subset Z$.  

By the \( C^0\)-estimate in Thm. \ref{realMA_barycenter}, one obtains:
\begin{equation*}
u_{\varpi} \leq \exp ( \phi_{ \Delta_{\xi}} ) = \exp \tuple{ \phi_{\Delta^0_{\xi}} + \beta},
\end{equation*}
It is then enough to show that the function $\exp(\sup_{p \in \Delta_{\xi}^0} \sprod{x,p})$ pulled back by 
\( \text{Log}: T_H \to t_{H,nc} \) 
extends to a continuous function over $Z$. This is well-known in the toric situation (cf. e.g. \cite[Proposition 3.3]{BB13}, \cite[3.3]{Ber20}). We provide a proof here for the reader's convenience. 
The toric variety \( Z \) is endowed with an action of \(\Rbb^{*}_{+} \) generated by \( - J \xi \). Let \( (n_i)_{1 \leq i \leq N} \) be a collection of lattice points of \( \Ccal_Z^{\vee} \), satisfying \( \sprod{n_i, \xi} \leq  C \), where \( C \) is a positive constant. Consider an action of \(\Rbb_{+}^{*} \) on \( \Cbb^{N}_{Z_1, \dots Z_N} \) defined by: 
\[ c. Z_i := c^{- \sprod{\xi, n_i}} Z_i \] 
Since \( Z \) is a \( T_H \)-toric affine variety, for \( C \) sufficiently large, we have a \( T_H \)-equivariant (hence \( \Rbb^{*}_{+} \)-equivariant) embedding: 
\[ F_p : T_H \simeq (\Cbb^{*})^r \to \Cbb^{N}, z \to (z^{n_1}, \dots, z^{n_N} )  \]
such that the closure of its image coincides with \( Z \).  Now by Kakutani's thereom, there exists a finite number of elements \( p_1, \dots, p_M \), which are the vertices of \( \Delta_{\xi}^0 \), satisfying
\[ \exp \tuple{ \sup_{p \in \Delta_{\xi}^0} \sprod{x,p} } = \max_{1 \leq i \leq M} \exp \sprod{x,p_i} \] 
It is then enough to show that for all \( p_i \), \( \text{Log}^{*} \exp \sprod{x, p_i} \) extends continuously over \( Z \). We remove the index \( i \) in what follows. Since \( p \in \Mcal(T_H) \cap \Ccal_Z^{\vee} \) and that \( \Ccal_Z^{\vee} \) is of maximal dimension, there exist lattice points \( n_{i_1}, \dots, n_{i_L}  \in \Mcal(T_H) \cap \Ccal_Z^{\vee} \) such that \( p = a_{1} n_{i_1} + \dots + a_{L} n_{i_L} \).  Under push-forward by \( F_p \), the function \( \text{Log}^{*} \exp \sprod{x,p} \) then corresponds to: 
\[ \abs{Z_{i_1}}^{2a_{1}} \dots \abs{Z_{i_L}}^{2a_{L}} \] 
which is clearly continuous on \( \Cbb^N \).
As a consequence, \( \exp \phi_{\Delta_{\xi}^0} \) extends to a continuous function on \( Z \).  Finally, we obtain a locally bounded (in fact continuous) conical Calabi-Yau potential \( u_{\varpi} \) over \( Y \). This completes our proof of the \( (2) \Rightarrow (1) \) direction, except the smoothness of the radial function on the regular locus of $Y$. 
The smoothness over the regular locus results from \cite{N22}. 
\end{proof}

\begin{proof}[Proof of main Theorem \ref{main_theorem_volume_minimization}]
We shall prove that \((i) \Ra (ii) \Ra (iii) \) and \((i) \Lra (iii)\). 

It is known that existence of a \( \xi \)-conical Calabi-Yau metric implies \( K \)-stability for \( (Y,T_H,\xi) \) (cf. \cite[Corollary A.4]{LWX}).  From \cite[Theorem 6.1]{CS18} (see also \cite[Remark 2.27]{LX18}) if \( (Y,T_H,\xi) \) is \( K \)-stable, then \( \xi \) minimizes the volume functional on the space of normalized Reeb vectors (i.e. \(\xi_{-}\) is a minimizer of the \( P_{DH} \)-weighted volume functional in our case). 

The equivalence between conical Calabi-Yau metrics existence and volume minimization is a direct consequence of Prop. \ref{prop_volume_minimization_equivalent_existence} and Prop.  \ref{prop_existence_open_orbit_equivalent_global_existence}. 
\end{proof}

\subsection{Variational approach on weighted real Monge-Ampère equations}
\subsubsection{Existence condition on a Kähler-Einstein-related equation}
We have shown that the study of  (\ref{RealMongeAmpere2}) is equivalent to solving an equation of the type: 
\begin{equation} \label{KE_MA}
g(d\phi + \beta ) \MA_{\Rbb} (\phi) = e^{-n\phi}, \quad \ol{ d \phi(\Rbb^{r-1})} = \Delta^0_{\xi}
\end{equation}
where \( \MA_{\Rbb} \) denotes the real Monge-Ampère operator in the sense of Alexandrov. It is clear that \( g \geq 0 \) on \( \Delta_{\xi}^0 \) and \( g > 0 \) on \( \text{Int}(\Delta_{\xi}^0) \). 

In this part, we will study (\ref{KE_MA}) in the more general setting where \( \Delta \) is any \textit{convex body of dimension m} and \( g \) a \textit{bounded} function satisfying.
\[ g|_{\Delta} \geq 0, \quad g|_{\text{Int}(\Delta)} > 0 \]

\begin{thm} \label{realMA_barycenter}
Let \( \Delta \) be a convex body in \( \Rbb^{m} , m \in \Nbb^{*} \) such that \( 0 \in \text{Int}(\Delta) \). Let \( g \) be a bounded function, \( \geq 0 \) on \( \Delta \) and \( > 0 \) on \( \text{Int}(\Delta) \). Then there exists a strictly convex function \( \phi \), unique up to translation,  satisfying :
\begin{equation} \label{KE_MA_general} 
g(d  \phi ) \MA_{\Rbb} ( \phi ) = e^{-\phi}, \; \ol{d \phi(\Rbb^{m})} = \Delta 
\end{equation}
in the weak sense if and only if \(\bar_g(\Delta) = 0 \). 
Moreover, there exists a constant \( C > 0 \) such that 
\[ \sup_{\Rbb^m} \abs{\phi - \phi_{\Delta}} \leq C \] 
If \( g \) is smooth, then the weak solution \( \phi \) is actually a smooth solution. 
\end{thm}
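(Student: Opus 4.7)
The plan is to follow the variational approach of Berman--Berndtsson \cite{BB13}, adapted to the weighted setting where the weight \( g \) may vanish on \( \partial \Delta \). Throughout, I would work in the class \( \Pcal_\Delta \) of finite convex functions \( \phi : \Rbb^m \to \Rbb \) with \( \ol{d\phi(\Rbb^m)} \subseteq \Delta \), equivalently those such that \( \phi - \phi_\Delta \) is bounded above, and normalize modulo additive constants.

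\textbf{Necessity of the barycenter condition.} Suppose a weak solution \( \phi \) exists. The push-forward of \( \MA_{\Rbb}(\phi) \) by the subgradient map is Lebesgue measure restricted to \( \Delta \), so pushing forward the equation gives \( \int_\Delta g\, d\lambda = \int_{\Rbb^m} e^{-\phi}\, dx < \infty \). Pairing with the linear coordinate \( p_i \) and changing variables \( p = d\phi(x) \) yields
\[
\int_\Delta p_i\, g(p)\, d\lambda(p) \;=\; \int_{\Rbb^m} (\del_i \phi)\, e^{-\phi}\, dx \;=\; -\int_{\Rbb^m} \del_i\!\bigl(e^{-\phi}\bigr)\, dx \;=\; 0,
\]
the integration by parts being legitimate since \( d\phi(\Rbb^m) \subset \Delta \) is bounded and \( e^{-\phi} \) decays at least like \( e^{-\phi_\Delta} \), which is integrable thanks to \( 0 \in \text{Int}(\Delta) \). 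This forces \( \bar_g(\Delta) = 0 \).

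\textbf{Sufficiency via minimization of a Ding-type functional.} Assuming \( \bar_g(\Delta) = 0 \), consider
\[
\Fcal(\phi) \;=\; \int_\Delta \phi^*\, g\, d\lambda \;+\; \log \int_{\Rbb^m} e^{-\phi}\, dx,
\]
where \( \phi^* \) is the Legendre transform. Both terms shift by the same constant under \( \phi \mapsto \phi + c \), so \( \Fcal \) is translation-invariant; its formal first variation produces exactly \( g(d\phi)\MA_\Rbb(\phi) = c\, e^{-\phi} \) with \( c \) absorbable by a translation. I would then verify (i) lower semicontinuity of \( \Fcal \) under \( L^1_{loc} \)-convergence in \( \Pcal_\Delta \), combining Fatou for the log-integral with the classical stability of Legendre transforms; (ii) coercivity modulo additive constants: normalizing \( \sup(\phi_j - \phi_\Delta) = 0 \) along a minimizing sequence, if \( \phi_j \) escapes any translation-compact subset then \( \phi_j^* \) develops linear growth along some direction \( v \in \Rbb^m \), and \( \bar_g(\Delta)=0 \) is exactly what prevents this linear growth from being cancelled, forcing \( \int_\Delta \phi_j^* g\, d\lambda \to +\infty \); and (iii) extraction of a minimizer \( \phi_\infty \in \Pcal_\Delta \) by compactness of normalized convex functions.

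\textbf{Euler--Lagrange, \( C^0 \)-estimate, uniqueness and regularity.} The minimizer \( \phi_\infty \) satisfies the equation in the Alexandrov sense after fixing the translation normalization \( \int e^{-\phi_\infty} = \int_\Delta g\, d\lambda \). The upper bound \( \phi_\infty - \phi_\Delta \leq C \) is built into \( \Pcal_\Delta \); the matching lower bound follows from integrability of \( e^{-\phi_\infty} \) combined with the convexity of \( \phi_\infty \), which rules out a deep well (a lower barrier constructed from \( \phi_\Delta \) would otherwise violate finiteness of \( \int e^{-\phi_\infty} \)). Uniqueness modulo translation follows from the strict convexity of \( \Fcal \) along geodesics \( \phi_t = ((1-t)\phi_0^* + t\phi_1^*)^* \) in \( \Pcal_\Delta \), or alternatively from the comparison principle for real Monge--Amp\`ere. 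Finally, if \( g \) is smooth, then on the open set \( (d\phi_\infty)^{-1}(\text{Int}(\Delta)) \) the equation becomes a non-degenerate Monge--Amp\`ere equation with smooth positive right-hand side, and Caffarelli's interior regularity upgrades \( \phi_\infty \) to a smooth strictly convex solution on all of \( \Rbb^m \).

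\textbf{Main obstacle.} The crux is the coercivity step (ii): showing that escape in \( \Pcal_\Delta \) modulo translation forces \( \Fcal \to +\infty \) \emph{exactly} under the barycenter hypothesis requires a precise analysis of how Legendre transforms degenerate along normalized sequences, coupled with a Prékopa/Brunn--Minkowski control of the log-integral term, adapted to the fact that \( g \) is only bounded and may vanish on \( \partial \Delta \). The barycenter condition is what turns a potential cancellation between the two terms of \( \Fcal \) into a strict sign, and everything else in the argument is essentially standard once coercivity is in hand.
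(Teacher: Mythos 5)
Your proposal is correct and follows essentially the same route as the paper: the Berman--Berndtsson variational scheme for the Ding-type functional \( \int_\Delta \phi^* g\,d\lambda + \log\int e^{-\phi} \), with translation-invariance/coercivity governed by \( \bar_g(\Delta)=0 \) via Pr\'ekopa, uniqueness from strict convexity along Legendre geodesics, and Caffarelli--Evans--Krylov regularity using that \( \del\phi \) maps compact sets into compact subsets of \( \text{Int}(\Delta) \) where \( g \) is bounded below. The only cosmetic difference is that you obtain the necessity of the barycenter condition by a direct integration by parts, whereas the paper deduces it from boundedness of the Ding functional; both are standard and valid.
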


\begin{proof}
In \cite{BB13}, the weight \( g\) is supposed to be \( > 0 \) \textit{everywhere}, but the proof can be adapted almost word to word from \cite[Theorem 1.1]{BB13} to our setting, \textit{except the regularity of \( \phi \)}, as remarked after formula (2.1) in \textit{ibid.}. Instead of repeating the authors' proof, we indicate below the places where the arguments involving \( g \) are used, and show that our assumptions on \( g\) do not affect the validity of the theorem. For regularity of the solution when \( g \) is smooth and the \( C^0\)-estimate, see Lem. \ref{lemma_smooth_weight_implies_smooth_solution} and Lem. \ref{unicity_and_C_estimate}.   

Let \( \phi : \Rbb^m \to [-\infty, \infty[ \) be a convex proper function (i.e. non identically equal to \( - \infty \) ). The \textit{subdifferential} \( \del \phi \) of \( \phi \) is defined as  : 
\[ \del \phi(x_0) := \set{z \in \Rbb^m, \; \phi(x_0) + \sprod{z, x - x_0 } \leq \phi(x), \; \forall x \in \Rbb^m } \]
Denote by 
\( \Pcal(\Rbb^m) \) and \(\Pcal^{+}(\Rbb^m) \) the set of proper convex functions \( \phi \) in \( \Rbb^m\) such that \( \del \phi(\Rbb^m) \subset \Delta \) and \( \del \phi(\Rbb^m) = \text{Int}(\Delta) \), respectively. 
\begin{itemize}
    \item The \textit{\( g \)-weighted Monge-Ampère measure} in Alexandrov's sense is again well-defined:
for every Borel subset \( E \subset \Rbb^m\) and convex proper function \( \phi \):
    \[ \int_E \MA_g(\phi) = \int_{\del \phi(E)} g(p) d \lambda(p)\] 
     This still makes sense even if \( g \) is any \( L^1(\Rbb^m) \) function. 
    \item The measure \( g d\lambda \) still defines a positive finite Borel measure on \( \Delta \) because \( g\) vanishes only on \( \del \Delta \) and that \( d \lambda \) does not put mass on \( \del \Delta \). In particular, for all \( \phi \in \Pcal^{+}(\Rbb^m)\),
    \[ \int_{\Rbb^m} \MA_g(\phi) = \int_{\text{Int}(\Delta)} g d \lambda = \vol_{g}(\Delta) > 0\] 
    \item There exists a unique functional \( \Ecal_{g} \), called \textit{\(g\)-energy}, defined on \( \Pcal^{+}(\Rbb^m) \) such that 
    \[ d_{\phi} \Ecal_{g} = \MA_g(\phi) \]
    The proof is the same as in \cite[Proposition 2.9]{BB13}. A fact used by the authors in the proof is the following: for all non-negative continuous function \( f \),
    \[ \int_{\Rbb^m} f \MA_g(\phi) = \int_{\Delta} f( d \phi^{*}|_p) g(p) d \lambda(p) \]
    where \(\phi^{*} \) is the Legendre transform of \( \phi \). The equality still makes sense in our setting since \(d \phi^{*} \) exists \( d \lambda\)-a.e., hence \( g d \lambda \)-a.e..
    \item \( \Ecal_{g} \) extends uniquely to an increasing u.s.c. functional  \( \Pcal(\Rbb^m) \to [-\infty, +\infty[ \) by: 
\begin{equation} \label{g_energy_expression} \Ecal_g (\phi) = -m! \int_{\Delta} \phi^{*}(p) g(p) d\lambda(p) 
\end{equation}
The hypothesis \( g \geq 0 \) on \( \Delta \) guarantees that  \( \Ecal_g \) is an increasing and u.s.c functional on \( \Pcal(\Rbb^m) \). Indeed, since the Legendre transform is decreasing and \( g \geq 0 \), we see that \( \Ecal_g \) is increasing. The upper-semicontinuity now follows from Fatou's lemma (which applies to the positive measure \( g d \lambda \)) and the fact that for every sequence \( \phi_i \to \phi \) in \( \Pcal(\Rbb^m) \), \( \inf \phi_i^{*} \geq \phi_i^{*} \).
\end{itemize} 

Let \( E^1(\Rbb^m) \) be the set of \( \phi \in \Pcal(\Rbb^m) \) whose \(g\)-Monge-Ampère mass is \( \vol_g(\Delta) \) and whose \( g\)-energy is finite. Notice that \( \Pcal^{+}(\Rbb^m) \subset E^1(\Rbb^m) \).  Let 
\[ \Ical(\phi) := -\log \int e^{-\phi(x)} d \lambda(x) \in ]-\infty, + \infty] \] 
The \textit{Ding functional} \( \Dcal_{g} : E^1 \to ]-\infty,+\infty] \) is defined as 
\[ \Dcal_{g, \Delta}(\phi) = \frac{1}{r! \vol_g(\Delta)} \Ecal_g(\phi) - \Ical(\phi) \] 
The \textit{geodesic} between two strictly convex and smooth functions \( \phi_0 \) and \( \phi_1 \) in \( \Pcal(\Rbb^m) \) is the function \( t \in [0,1] \to \phi_t = (t \phi_0 + (1-t)\phi_1)^{*} \in \Pcal^{+}(\Rbb^m) \).

The following properties of \( \Dcal_{g} \) still hold in our setting: 
\begin{itemize}
    \item \( \Dcal_{g} \) is \textit{strictly concave} along geodesics in \( \Pcal^{+}(\Rbb^m) \), i.e. \( t \to \Dcal_g(\phi_t) \) is a concave function which is affine iff the geodesic is a translation \( \phi_t(p) = \phi(p+ta) \) \cite[Proposition 2.15]{BB13}.  The proof uses the Prekopa-Leindler inequality on \(t \to \Ical(\phi_t) \), which is independent of \( g\), and the affineness of \( \Ecal_{g} \) along a geodesic (this is clear by equation (\ref{g_energy_expression}))
    \item The \textit{coercivity} of \( \Dcal_{g} \) \cite[Theorem 2.16]{BB13}: for all \( \varepsilon > 0 \), there exists \( C_{\varepsilon} > 0 \) such that 
    \[\Dcal_{g} \leq (1 - \varepsilon) \Ecal_{g} +  C_{\varepsilon} \]  
     \cite[Lemma 2.14, Theorem 2.16]{BB13}. Here, the proof uses concavity of \( \Dcal_g \) as well as affineness of \( \Ecal_{g} \) along geodesics, and refinement by scaling of the functional \( - \Ical \) (which is independent of \(g\)). The latter is based on the following inequality \cite[Formula (2.16)]{BB13}: for a well-chosen \( \phi_0 \in \Pcal(\Rbb^m) \),
     \[ e^{-\phi_0} \leq C \MA_g (\phi_0) \]
     \item \( \Dcal_{g} \) is bounded iff \( \Dcal_{g}(\phi(.+a)) = \Dcal_{g}(\phi) \) iff \( \bar_{g}(\Delta) = 0 \). Again, concavity of \( \Dcal_g\) and affineness along geodesics of \( \Ecal_g \) are used in the arguments. The proof for the direction ``\(\bar_{g}(\Delta) = 0 \Ra \Dcal_g\) bounded'' uses the coercivity of \( \Dcal_g \). 
\end{itemize}
\textit{Conclusion:} Suppose that \( \bar_g(\Delta) = 0 \), then \( \sup_{\phi \in E^1} \Dcal_g \) is attained. This can be seen by showing that \( \Dcal_g \) is u.s.c. But since \( \Ecal_g \) is already u.s.c. (the non-negativity of \( g \) is required for this property), it is enough to show that \( -\Ical \) is u.s.c.. This functional is actually continuous and the proof is the same as in \cite{BB13} because the expression of \(-\Ical\) does not involve \( g \). The maximizer of \( \Dcal_g \) actually satisfies (\ref{KE_MA_general}) in the weak sense, and is unique up to translation by strict concavity of \( \Dcal_g \).

Conversely, if there exists a maximizer \( \phi \) satisfying (\ref{KE_MA_general}) in the weak sense, then \( \Dcal_g \) is bounded, hence \( \bar_g(\Delta) = 0 \). 

Finally, apply Lem. \ref{unicity_and_C_estimate} for \( \mu = e^{-\phi} d \lambda \) yields the \( C^0\)-estimate. The assertion on smoothness is proved in Lem. \ref{lemma_smooth_weight_implies_smooth_solution}, which completes our proof of the theorem. 
\end{proof}

Let us recall the following regularity properties, which go back to Caffarelli \cite{Caf90a, Caf90b} (see also \cite[Theorem 2.24]{BB13}): 

\begin{thm} \label{regularity_theorem}
Let \( \Omega \) be an open bounded convex set in \( \Rbb^m \) and \( f > 0 \). Then every convex solution \( \phi \) on \( \Omega \) of the equation: 
\[ \MA(\phi) = f d\lambda(x), \quad \phi = 0 \; \text{on} \; \del \Omega \] 
is 
\begin{enumerate}
\item  strictly convex and \( C^{1,\alpha}_{\text{loc}}(\Omega) \) for every \( \alpha > 0 \) if there exists a constant \( C > 0 \) such that \( 1/C \leq f \leq C \). 
\item  \( W^{2,p}_{\text{loc}}(\Omega) \) for all \( p > 1 \) if \( f \) is in addition continuous.  
\end{enumerate} 
\end{thm}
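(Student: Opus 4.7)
The plan is to follow Caffarelli's classical regularity program for the Monge-Amp\`ere equation, which proceeds in three successive steps that exploit the affine invariance of the operator $\MA$: first establish strict convexity of $\phi$, then deduce $C^{1,\alpha}$, and finally upgrade to $W^{2,p}$ when $f$ is continuous. The fundamental analytic tool throughout is the Alexandrov maximum principle, which bounds $\min_{\Omega} \phi$ in terms of $\operatorname{diam}(\Omega)$ and $\int_{\Omega} \MA(\phi)$, together with normalization of sections via John's lemma.

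For the strict convexity assertion, I would argue by contradiction. If $\phi$ is not strictly convex at an interior point, there is a supporting affine function $l$ whose contact set $W = \set{\phi = l}$ meets $\Omega$ in a convex set of positive dimension. After subtracting $l$ and performing an affine change of coordinates (which preserves the Monge-Amp\`ere equation up to a rescaling of $f$), one may assume $0 \in W$ is an extremal point and $e_1 \in W$. Exploiting the upper bound $f \leq C$ to build convex barriers supported on thin cylindrical neighborhoods of $[0,e_1]$, and comparing with $\phi$ using the maximum principle, one shows that $W$ must propagate to $\partial \Omega$, contradicting that $0$ is an interior extremal point of $W$. The positivity bound $f \geq 1/C$ enters to rule out the degenerate case where $\phi$ coincides with $l$ on a set of positive measure.

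Once strict convexity holds, the sections $S_h(x) = \set{y : \phi(y) < \phi(x) + \nabla\phi(x) \cdot (y-x) + h}$ are bounded and compactly contained in $\Omega$ for $h$ small. John's lemma produces affine maps $T_h$ with $B_1 \subset T_h S_h \subset B_m$, and the normalized function $\tilde\phi := h^{-1}(\phi \circ T_h^{-1} - l \circ T_h^{-1})$ satisfies a Monge-Amp\`ere equation whose right-hand side is again two-sided bounded away from $0$ and $\infty$. By iterating the comparison between sections of different heights and using the engulfing property together with the Alexandrov estimate, one obtains H\"older decay of $\phi - l$ along nested sections, which translates into $\nabla \phi \in C^{0,\alpha}_{\text{loc}}(\Omega)$ for every $\alpha \in (0,1)$.

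For the $W^{2,p}$ statement, the additional continuity of $f$ lets one freeze $f$ at its value $f(x_0)$ on a small normalized section and treat the true equation as a perturbation of $\MA(v) = f(x_0)$, whose solution has explicit bounded Hessian on the normalized ball. Comparing $\phi$ with $v$ yields that on a large fraction of the section the two Hessians are close in operator norm. A Calder\'on-Zygmund type decomposition covering argument then gives $\abs{\set{\abs{D^2 \phi} > t}} \leq C t^{-p}$ for every $p > 1$, which is $W^{2,p}_{\text{loc}}$ regularity. The main obstacle in this whole program is the strict convexity step: none of the subsequent normalization arguments make sense without it, and the barrier construction propagating the contact set to the boundary requires very careful geometric choices adapted to the assumption $1/C \leq f \leq C$.
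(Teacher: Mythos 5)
The paper does not prove this statement: it is explicitly presented as a recollection of Caffarelli's interior regularity theory, with the proof delegated to \cite{Caf90a, Caf90b} and \cite[Theorem 2.24]{BB13}. So there is no in-paper argument to compare against, and the only question is whether your outline is a faithful account of the cited results. It is: the three-stage program you describe (strict convexity via the localization property of the contact set of a supporting affine function, \( C^{1,\alpha} \) via John normalization of sections and the engulfing property, \( W^{2,p} \) via freezing \( f \) and a Calder\'on--Zygmund-type covering argument) is exactly Caffarelli's proof. Be aware, though, that as written your text is a roadmap rather than a proof: the localization theorem for the contact set, the engulfing property, and the density estimate \( \abs{\set{\abs{D^2\phi} > t}} \lesssim t^{-p} \) are each substantial theorems that you invoke by name without establishing. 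Given that the statement functions in the paper purely as a black box feeding into Lemma \ref{lemma_smooth_weight_implies_smooth_solution}, citing them is the appropriate level of rigor, but then the honest conclusion is ``this is Caffarelli's theorem'' rather than a self-contained proof.

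One substantive caveat on the exponent: with only the two-sided bound \( 1/C \leq f \leq C \), the nested-section iteration produces H\"older decay at a \emph{fixed} geometric rate, so the method yields \( C^{1,\alpha}_{\text{loc}} \) for \emph{some} \( \alpha = \alpha(m,C) \in (0,1) \), not for every \( \alpha \in (0,1) \) as you assert. The phrase ``for every \( \alpha > 0 \)'' in the statement is a misquotation of Caffarelli's theorem (a \( C^{1,\alpha} \) estimate for all \( \alpha < 1 \) with merely bounded measurable \( f \) is false), and your sketch inherits this overclaim; since the downstream application in Lemma \ref{lemma_smooth_weight_implies_smooth_solution} only needs \emph{some} \( \alpha \), the fix is harmless, but you should not present the iteration as delivering an arbitrary exponent. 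A second, minor point: in the strict-convexity step the division of labor between the two bounds is not quite as you describe it --- the lower bound \( f \geq 1/C \) is what drives the Alexandrov-type estimate showing that a contact set with an interior extremal point must be a single point, while the upper bound is what makes the comparison with the normalized model solution on a section two-sided; attributing the barrier construction to the upper bound alone and relegating the lower bound to ``ruling out a degenerate case'' inverts the emphasis of the actual argument.
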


\begin{lem} \label{lemma_smooth_weight_implies_smooth_solution}
Let \( \phi \in \Pcal^{+}(\Rbb^m) \) be a weak solution of \( \MA_g(\phi) = e^{-\phi} d \lambda \). If \( g \) is smooth, then \( \phi \) is actually smooth. 
\end{lem}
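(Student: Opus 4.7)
The plan is to reduce the statement to Caffarelli's interior regularity (Thm.~\ref{regularity_theorem}) by showing that, near any fixed $x_0 \in \Rbb^m$, the unweighted Alexandrov Monge-Amp\`ere measure $\MA_{\Rbb}(\phi)$ has density bounded above and below by positive constants. The subdifferential $\del \phi(x_0)$ is a non-empty compact convex subset of $\text{Int}(\Delta)$, and hence lies at positive distance from $\del \Delta$. By upper semi-continuity of the set-valued map $x \mapsto \del \phi(x)$ (a standard property of finite convex functions), there exist a neighborhood $U$ of $x_0$ and a compact $K \subset \text{Int}(\Delta)$ with $\del \phi(U) \subset K$. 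Since $g$ is smooth and strictly positive on $\text{Int}(\Delta)$, its restriction to $K$ satisfies $0 < c \leq g \leq C$ for some constants.

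Combining the defining identity $\int_E \MA_g(\phi) = \int_{\del \phi(E)} g(p)\, d\lambda(p)$ (for Borel $E \subset U$) with the hypothesis $\MA_g(\phi) = e^{-\phi} d\lambda$ and the local boundedness of $\phi$ (automatic for a finite convex function), I obtain positive constants $c', C'$ with $c'\, d\lambda \leq \MA_{\Rbb}(\phi) \leq C'\, d\lambda$ on any ball compactly contained in $U$. Thm.~\ref{regularity_theorem}(i) then shows that $\phi$ is strictly convex and belongs to $C^{1,\alpha}_{\text{loc}}$ near $x_0$ for every $\alpha \in (0,1)$.

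Once $\phi$ is $C^1$, the weak equation becomes the classical pointwise Monge-Amp\`ere equation
$$\det(d^2 \phi) = \frac{e^{-\phi}}{g(d\phi)},$$
whose right-hand side is H\"older continuous and pinched between two positive constants locally. Thm.~\ref{regularity_theorem}(ii) then upgrades $\phi$ to $W^{2,p}_{\text{loc}}$ for every $p > 1$, and Sobolev embedding improves this to $C^{1,\alpha'}_{\text{loc}}$ for any $\alpha' \in (0,1)$.

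To boost to $C^\infty$, I would rewrite the equation as $F(D^2\phi, D\phi, \phi) = 0$ with $F(M,q,t) = \log \det M + \log g(q) + t$, a smooth fully nonlinear uniformly elliptic operator whose arguments stay in a compact subset of its natural domain on any relatively compact subdomain of $U$. The Evans-Krylov theorem then yields interior $C^{2,\alpha}$ regularity, after which Schauder estimates applied to the linearized (uniformly elliptic, smooth-coefficient) operator give $C^{k,\alpha}_{\text{loc}}$ regularity for every $k$. The main technical step --- and the place where the assumption $g > 0$ on $\text{Int}(\Delta)$ is used decisively --- is the initial localization: forcing $\del \phi(U) \Subset \text{Int}(\Delta)$ so that the weight can be absorbed into a genuine two-sided bound on the unweighted Monge-Amp\`ere density, which then opens the door to all the standard real Monge-Amp\`ere regularity machinery.
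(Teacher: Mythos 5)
Your proof is correct and follows essentially the same route as the paper's: confine \( \del\phi \) to a compact subset of \( \text{Int}(\Delta) \) where \( g \) is pinched between positive constants, deduce local two-sided bounds on the unweighted Monge--Ampère density, and then invoke Caffarelli's \( C^{1,\alpha} \) and \( W^{2,p}_{\text{loc}} \) results followed by Evans--Krylov and Schauder bootstrapping. The only difference is cosmetic: you localize around an arbitrary point via upper semi-continuity of the subdifferential, whereas the paper exhausts \( \Rbb^m \) by the sublevel sets \( \set{\phi < R} \).
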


\begin{proof}
Since \( \phi \) is proper, \( \Omega_R := \set{\phi < R } \Subset \Rbb^m \) are bounded convex and relatively compact domains in \( \Rbb^m \) which cover \( \Rbb^m \). In particular, \( e^{-\phi}, e^{\phi} \in L^{\infty}(B(0,R)) \) for all \( R > 0 \). By assumption \( g > 0 \) on \( \text{Int}(\Delta) \), hence :
\[ C_g := \sup_{\Delta} g  > 0 \] 
It follows that for each Borel set \( E \), 
\begin{align*}
C_g \int_{E} \MA_{\Rbb}(\phi) &= C_g \int_{\del \phi(E)} d \lambda(p) \geq \int_{\del \phi(E)} g(p) d \lambda(p) \\
&= \int_E e^{-\phi} d \lambda(x) \geq e^{-R} \int_E d \lambda(x) 
\end{align*}
hence \( \MA_{\Rbb} (\phi) \geq (e^{-R}/ C_g ) d \lambda(x) =: C_{R,g} d \lambda(x) \) on \( \Omega_R \) in the sense of measures. 

We assert that \( \del \phi (\ol{\Omega}_R) \) is compact in \( \text{Int}(\Delta) \)
for all \( R \). Indeed, by definition of subdifferential
\[ \del \phi(\ol{\Omega}_R) = \cup_{x \in \ol{\Omega}_R} \del \phi(x) \subset \del \phi(\Rbb^m) = \text{Int}(\Delta) \]  
Since \( \ol{\Omega}_R \) is compact, \( \del \phi(\ol{\Omega}_R) \) is also compact (for a proof, see e.g. \cite[Lemma A.22]{Fig}), so it is a compact of \( \text{Int}(\Delta) \). By our assumption \( g > 0 \) on \( \text{Int}(\Delta) \) and bounded on \( \Delta \), so we have:
\[ \inf_{\del \phi(\Omega_R)} g \geq \min_{\del  \phi(\ol{\Omega}_R)} g > 0 \] 
As a consequence, there exists a constant \( C_{\Delta,R } = 1 / \inf_{\del \phi (\Omega_R)} g > 0 \) (depending only on \( \Delta \) and \( R \)) such that : 
\[ \MA_{\Rbb}(\phi) \leq C_{\Delta,R} d \lambda(x) \]
on \( \Omega_R \) in the sense of measures. It follows that \[ C_{R,g} d \lambda(x) \leq \MA_{\Rbb} (\phi) \leq C_{\Delta,R} d \lambda(x) \] 
By Caffarelli's regularity results in Thm. \ref{regularity_theorem}, \( \phi \in C^{1,\alpha}(\Omega_R) \) for all \( R > 0 \), hence \( d \phi \in C^0(\Rbb^m) \). Therefore : 
\[ \MA_{\Rbb}( \phi) = h d \lambda\] 
for the continuous function \( h = e^{-\phi}/g(d \phi) \) on \( \Omega_R \) satisfying \( C_{R,g} \leq h \leq C_{\Delta,R} \). Finally, \( \phi \in W^{2,p}_{\text{loc}}(\Rbb^m) \) for all \( p > 0 \). From Evans-Krylov theory \cite{E82}, \cite{K83}, we obtain a \( C^{2,\alpha} \)-estimate of \( \phi \) on \( \Omega_R \). Finally, a linear bootstrapping argument with Schauder estimate (cf. \cite[proof of Lemma 7.2]{E82}), and smoothness of \( g \) allow us to conclude that \( \phi \in C^{\infty}(\Omega_R) \) for all \( R > 0 \), hence \( \phi \in C^{\infty}(\Rbb^m) \).  
\end{proof}

\begin{lem} \label{unicity_and_C_estimate}
Let \( \Delta \) be a convex body in \( \Rbb^m \) and \( \mu \) a measure of total mass \( \vol_g(\Delta) \) on \( \Rbb^m\). Suppose that there exists a weak convex solution \( \phi: \Rbb^m \to \Rbb\) to the following problem:
\begin{equation} 
\MA_g( \phi) = \mu, \quad  (\del \phi)(\Rbb^m) = \text{Int}(\Delta) 
\end{equation}
Then after normalizing \( \phi \) such that \( \sup_{\Delta}(\phi - \phi_{\Delta}) = 0\), we have for all \( q > m \) the following inequality: 
\[ \sup_{\Rbb^m} \abs{\phi - \phi_{\Delta}} \leq C(q,m, \Delta, \mu) \] 
where \( C(q,m,\Delta,\mu) \) is a constant that depends only on \( q, m, \Delta \) and the measure \( \mu \). More precisely:
\[ C(q,m,\Delta, \mu) = \frac{d(\Delta)}{\vol(\Delta)} \int_{\Rbb^m} \abs{x} \mu + C_{q,m} \frac{d(\Delta)^{(1  - m/q)}}{\vol (\Delta)} \tuple{ \int_{\Rbb^m} \abs{x}^q \mu}^{1/q} \] 
where \( d( \Delta) \) is the diameter of the convex body \( \Delta \). In particular, \( \phi \in \Pcal^{+}(\Rbb^m)\), hence has full \( g d \lambda \)-mass. 
\end{lem}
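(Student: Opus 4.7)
The plan is to prove the two halves $\phi \leq \phi_{\Delta}$ and $\phi \geq \phi_{\Delta} - C$ of the estimate separately. The upper bound is purely a consequence of convexity and the constraint $\del\phi(\Rbb^m) \subset \Delta$: for any $x$ and any subgradient $p \in \del\phi(x)$, the subgradient inequality at $0$ gives
\[
\phi(0) \geq \phi(x) - \sprod{p,x} \geq \phi(x) - \phi_{\Delta}(x),
\]
since $p \in \Delta$ implies $\sprod{p,x} \leq \phi_{\Delta}(x)$. Hence $\sup_{\Rbb^m}(\phi - \phi_{\Delta})$ is attained at $x=0$ and equals $\phi(0)$. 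The normalization $\sup_{\Delta}(\phi - \phi_{\Delta}) = 0$ is therefore the same as $\phi(0) = 0$, and gives $\phi \leq \phi_{\Delta}$ everywhere.

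For the lower bound $\phi \geq \phi_{\Delta} - C$, I would exploit the Monge-Amp\`ere equation via the Legendre transform $\psi := \phi^{*}$. By duality, $\phi \geq \phi_{\Delta} - C$ on $\Rbb^m$ is equivalent to the pointwise bound $\psi \leq C$ on $\Delta$, with $\psi \geq 0$ already from Step 1. The equation $\MA_{g}(\phi) = \mu$ is equivalent to the push-forward identity $(\nabla\phi)_{*}\mu = g\, d\lambda|_{\Delta}$. Pairing this with $\psi$ and using $\psi(\nabla\phi(x)) = \sprod{x,\nabla\phi(x)} - \phi(x)$ yields the integral identity
\[
\int_{\Delta} \psi(p)\, g(p)\, d\lambda(p) \;=\; \int_{\Rbb^m}\bigl(\sprod{x,\nabla\phi(x)} - \phi(x)\bigr)\, d\mu(x),
\]
which combined with $|\nabla\phi| \leq d(\Delta)$ and $-d(\Delta)|x| \leq \phi(x) \leq d(\Delta)|x|$ (the lower pointwise bound coming from a subgradient inequality at $0$) bounds $\int_{\Delta}\psi g\, d\lambda$ by a multiple of $d(\Delta)\int |x|\, d\mu$. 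This already accounts for the first-moment term in the stated constant.

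To upgrade this integral control to a pointwise bound on $\psi$, I would invoke an Alexandrov--Bakelman--Pucci-type estimate. The key observation is that the unweighted Monge-Amp\`ere measure $\MA(\phi) = \mu / g(\nabla\phi)$ has total mass $\vol(\del\phi(\Rbb^m)) = \vol(\Delta)$, which is what produces the $\vol(\Delta)^{-1}$ factor in the bound. Working on the bounded convex exhaustion $\Omega_{R} = \{\phi_{\Delta} \leq R\}$ --- on which Step 1 gives $u_{R} := \phi - R \leq 0$ on $\del\Omega_{R}$ --- the $L^{q}$-version of ABP applied to the convex function $u_{R}$, valid for $q > m$ and producing a $d(\Omega_{R})^{1-m/q}$ factor, controls $\inf_{\Omega_{R}}u_{R}$ by the $L^{q}$-norm of $\MA(\phi)$. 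The change of variables $p = \nabla\phi(x)$ then converts this $L^{q}$-norm into an $L^{q}$-moment of $|x|$ against $\mu$, yielding the second summand of the statement. Letting $R \to \infty$ produces the uniform estimate on all of $\Rbb^m$.

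The main obstacle is the ABP step itself: the classical estimate is formulated for a convex function on a bounded domain whose unweighted Monge-Amp\`ere density is controlled, whereas here $g$ may vanish on $\del\Delta$ and hence $\MA(\phi) = \mu / g(\nabla\phi)$ need not be integrable pointwise. A careful localization onto the contact set and an exhaustion argument as $R \to \infty$ are needed to make the estimates uniform and to recover exactly the stated constant. Once $|\phi - \phi_{\Delta}| \leq C$ is in hand, the final assertion $\phi \in \Pcal^{+}(\Rbb^m)$ is immediate: the two-sided bound forces $\phi$ to have the same asymptotic growth as $\phi_{\Delta}$, so $\del\phi(\Rbb^m)$ fills $\operatorname{Int}(\Delta)$ up to a Lebesgue-null set and $\phi$ has full $g\, d\lambda$-mass.
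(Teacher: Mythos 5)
Your Step 1 and the integral identity in Step 2 are correct, and they reproduce (in dual form) the first half of the intended argument: the normalization forces $\phi(0)=0$, hence $\phi\leq\phi_{\Delta}$, i.e.\ $\psi:=\phi^{*}\geq 0$ on $\Delta$, and the push-forward identity $(\nabla\phi)_{*}\mu=g\,d\lambda|_{\Delta}$ together with Fenchel's equality gives $\int_{\Delta}\psi\,g\,d\lambda\leq C\,d(\Delta)\int|x|\,d\mu$, which is the first-moment term. The genuine gap is the final step. First, the quantity you propose to control, $\inf_{\Omega_R}u_R$ with $u_R=\phi-R$ on $\Omega_R=\{\phi_{\Delta}\leq R\}$, is not the right one: since $\phi(0)=0$, one has $u_R(0)=-R$, so $\inf_{\Omega_R}u_R\to-\infty$ and no $R$-uniform ABP bound on it can exist; the function you actually need to bound from below, $\phi-\phi_{\Delta}$, is a difference of convex functions and ABP does not apply to it directly. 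Second, the structure of the stated constant cannot come from ABP on the exhaustion: the factors $\vol(\Delta)^{-1}$ and $d(\Delta)^{1-m/q}$ involve the \emph{polytope} $\Delta$, not the domains $\Omega_R$ (whose diameters grow like $R$), and the total Monge--Amp\`ere mass $\vol(\Delta)$ enters ABP with a positive power $\vol(\Delta)^{1/m}$, not as $\vol(\Delta)^{-1}$. Third, the claimed conversion of the ``$L^{q}$-norm of $\MA(\phi)$'' into $\int|x|^{q}\,d\mu$ is unjustified: the density $\mu/g(\nabla\phi)$ has no reason to have its $L^q$ norm equal to a moment of $\mu$, and when $g$ vanishes on $\del\Delta$ it need not even be locally bounded.

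The intended route stays entirely on the dual side. With $f=\phi^{*}$, the normalization gives $\inf_{\Delta}f=0$ and $\sup_{\Rbb^m}|\phi-\phi_{\Delta}|=\sup_{\Delta}f$, and the change of variables $\int_{\Delta}|df|^{q}\,g\,d\lambda=\int_{\Rbb^m}|x|^{q}\,d\mu$ (valid because $df=(\nabla\phi)^{-1}$ $\lambda$-a.e.) shows $f\in W^{1,q}(\Delta,g\,d\lambda)$. The Sobolev/Morrey embedding $W^{1,q}(\Delta)\hookrightarrow L^{\infty}(\Delta)$ for $q>m$ on the convex body $\Delta$ is precisely what produces both the $\vol(\Delta)^{-1}$ prefactors and the $d(\Delta)^{1-m/q}$ factor, and the $L^{1}$ term $\int_{\Delta}|f|\,g\,d\lambda$ is controlled by $d(\Delta)\int|x|\,d\mu$ via the subgradient inequality at the minimum point of $f$. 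If you want to keep your primal-side formulation, you would need to replace the ABP step by this dual Sobolev argument (or prove an equivalent weighted estimate), since as written the step both targets a divergent quantity and cannot yield the stated constant.
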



We will consider the following Sobolev space:
 \[ W^{1,q}(\Delta, gd\lambda) = \set{h \in L^q(\Delta, g d \lambda), \; \text{weak first derivatives of \(h\) are in \(L^q(\Delta,gd\lambda)\)}}\]

\begin{proof}
Let \( \phi \) be a weak solution with \( \sup_{\Delta}(\phi - \phi_{\Delta}) = 0 \). The Legendre transform \(f :=  \phi^{*} \) defines a convex function on \( \text{Int}(\Delta) \). Since \( \phi \in \Pcal^{+}(\Rbb^m) \), we have \( f \in L^q(\Delta,gd\lambda) \) because the Legendre transform is a bijection between \( \Pcal^{+}(\Rbb^m) \) and bounded convex functions on \( \Delta \) \cite[Proposition 2.3]{BB13}. Using \( \MA_g(\phi) = \mu = (d\phi^{*})_{*}(d \lambda) \) (cf. \cite[Lemma 2.7]{BB13}), we have 
\[ \int_{\Delta} \abs{d f(p)}^{\alpha} g(p) d\lambda(p) = \int_{\Rbb^m} \abs{x}^{\alpha} \MA_g(\phi) = \int_{\Rbb^m} \abs{x}^{\alpha} \mu , \quad \forall \alpha > 0 \] 
hence \( f \in W^{1,q}(\Delta, g d\lambda) \). 

If \( q > m \), then by Sobolev's inequality for the continuous embedding \( W^{1,q} (\Delta, g \lambda) \subset L^{\infty}(\Delta, g d \lambda) \):
\[ \sup_{\Delta} \abs{f} \leq \frac{1}{\vol (\Delta)} \int_{\Delta} \abs{f} gd\lambda  + C_{q,m} \frac{d(\Delta)^{(1-m/q)}}{\vol(\Delta)} \tuple{\int_{\Delta} \abs{d f}^q gd \lambda}^{1/m} \]
The proof is exactly the same as in e.g. \cite[Theorem IX.12]{Brez83}, where it is given when \( \Delta \) is a cube of length \(d\) and containing \( 0 \) in the interior, but this generalizes word-by-word to any convex body of diameter \( d \), containing \(0\) in its interior. The second integral term on the rhs is exactly 
\[ \tuple{\int_{\Rbb^m} \abs{x}^q \mu}^{1/q} \] 
Now let \(\phi_{\Delta}\) the support function of \( \Delta \). Using that \(-\sup(\phi-\phi_{\Delta}) = \inf_{\Delta} f \) \cite[Proposition 2.3]{BB13} and the normalization condition \( \sup( \phi - \phi_{\Delta}) = 0 \), we have:  
\[ -\sup(\phi - \phi_{\Delta}) = \inf_{\Delta} f = 0 \]
Let \(p_0 \in \Rbb^m \) such that \( f(p_0) = \inf_{\Delta} f \). By the convexity of \(f\), the differential exists Lebesgue-a.e. (hence \( g d \lambda \)- a.e.), so we have \( g d \lambda\)-a.e:
\[ \abs{f(p)} \leq d f(p). (p-p_0), \] 
which implies by Cauchy-Schwartz inequality:
\[ \int_{\Delta} \abs{f(p)} g(p) d \lambda(p) \leq d(\Delta) \int_{\Delta} \abs{df(p)} g(p) d \lambda(p) = d(\Delta) \int_{\Rbb^m} \abs{x} \mu \] 
This allows us to conclude.
\end{proof}

\section{Examples of Calabi-Yau horospherical cones} \label{Examples}
\subsection{Regular Calabi-Yau cones}
\subsubsection{An easy example}

\begin{ex}
Consider the conical embedding of \( SL_3/ U^{-} \) in Ex. \ref{rank_two_horospherical}. Fixing a root system \( (SL_3, B, T) \) with positive roots \( (\alpha_1, \alpha_2, \alpha_1 + \alpha_2) \). The linear function associated to the canonical divisor is \( \beta = \alpha_1 + \alpha_2 \). 

The cone admits a regular conical Calabi-Yau metric for the normalized (opposite) Reeb vector \( \xi_{-} = (\alpha_1^{\vee} + \alpha_2^{\vee})/4 \). Although it is possible to prove this by direct computations, we will employ a symmetry argument. Note that the volume minimization condition is invariant with respect to exchanging two roots, hence by uniqueness of the solution, the opposite Reeb vector is also invariant by this symmetry. Therefore, letting \( \xi_{-} = (x \alpha_1^{\vee} + \alpha_2^{\vee})/4 \), we have  \( x \alpha_1^{\vee} + \alpha_2^{\vee} = x \alpha_2^{\vee} + \alpha_1^{\vee} \), hence \( x = 1\). 

\begin{figure}[H]
\begin{tikzpicture}
\pgfmathsetmacro\ax{2}
\pgfmathsetmacro\ay{0}
\pgfmathsetmacro\bx{2 * cos(90)}
\pgfmathsetmacro\by{2 * sin(90)}
\pgfmathsetmacro\lax{2*\ax/3 + \bx/3}
\pgfmathsetmacro\lay{2*\ay/3 + \by/3}
\pgfmathsetmacro\lbx{\ax/3 + 2*\bx/3}
\pgfmathsetmacro\lby{\ay/3 + 2*\by/3}

\tikzstyle{couleur_pl}=[circle,draw=black!50,fill=blue!20,thick, inner sep = 0pt, minimum size = 2mm]

\fill [black!20] (0,0)--(\ax,\ay)--(\bx,\by) -- cycle;
\node at (\ax, \ay) [couleur_pl] {};
\node at (\bx, \by) [couleur_pl] {};
\draw[->, ultra thick] (0,0) -- (\ax,\ay) node[below right] {\( \rho(D_1) \)};
\draw[->, ultra thick] (0,0) -- (\bx, \by) node[above left] {\(\rho(D_2) \)};
\draw[->, ultra thick] (0,0) -- (0.25*\ax + 0.25*\bx, 0.25*\ay + 0.25*\by) node[above right] {\( \xi_{-} \)};

\draw (-2,0)--(2,0);
\draw (0,-2)--(0,2);



\end{tikzpicture}
\caption{}
\end{figure}

\end{ex}

\subsubsection{Examples arising from symmetric spaces}

Let us give a brief overview about the structure of (algebraic) symmetric spaces. The reader may consult \cite{Vus74}, \cite{Vus90} or \cite{dCP83} for a detailed treatment.  
Let \( G \) be a complex connected semisimple group, endowed with an algebraic involution $\sigma \neq \text{Id}$. Let $H$ be a closed subgroup such that $G^{\sigma} \subset H \subset N_G(G^{\sigma})$. The homogeneous space $G/H$ is called \textit{a symmetric space}. It is well-known that a symmetric space is spherical (see \cite{Vus74}).  

Let \( T_s \subset G \) be a maximal torus satisfying \( \sigma(t) = t^{-1}, \; \forall\; t \in T_s \). The rank of the symmetric space coincides with the dimension of \( T_s \). After choosing a Lévi subgroup adapted to \( H \), we have an identification  \( \Mcal(G/H) = \Mcal(T_s / T_s \cap H) \). Moreover, \( T_s / T_s \cap H \) is a subgroup of finite index in \( T_s \) (cf. \cite[2.2]{Vus90}), whence 
\[ \Mcal(G/H)_{\Rbb} = \Mcal(T_s)_{\Rbb} \] 
Let \( \tfrak_{s} \) be the Lie algebra of \( T_s \). The  non-compact part \( \tfrak_{nc,s} \) of \( \tfrak_s \) is isomorphic to \( \Ncal(T_s)_{\Rbb} \simeq \Hom(\Cbb^{*},T_s) \otimes \Rbb \).  

\begin{defn} \cite{dCP83} \cite{Vus76}
\begin{itemize}
\item There exists a \( \sigma \)-stable maximal torus \( T \) containing \( T_s \). 

\item Let \( \Phi \) the root system \( (G,T) \). The \emph{restricted root system of \( G/H \)}, which has the same rank as \( G/H \), is defined by:
\[ R := \set{ \ol{\alpha} \neq 0, \;  \ol{\alpha} = \alpha - \sigma(\alpha), \; \alpha \in \Phi} \]   
In particular, \( \ol{\alpha}|_{\tfrak_s} = 2 \alpha|_{\tfrak_s} \). 
\item After fixing a restricted root system, the valuation cone \( \Vcal \) of a symmetric space can be considered as the \textit{negative restricted Weyl chamber}, which is always strictly convex (if \( G/H \) is semisimple) and polyhedral of maximal dimension.
\end{itemize}
\end{defn}

The multiplicity of an element \( \ol{\alpha} \) is 
\[ m_{\ol{\alpha}} := \# \set{ \gamma \in \Phi, \; \ol{\alpha} = \gamma - \sigma(\gamma)} \] 
The set of multiplicities of a symmetric space is defined as the set of multiplicities of its restricted root system. Let \( n \) and \( r \) be respectively the dimension and the rank of \( G/H \). We have:
\[ n = r + \sum_{\ol{\alpha} \in R } m_{\ol{\alpha}} \] 

\begin{defn} \cite[Lemma 2.3]{Vus90}
Let \( \alpha \) be a root of \( \Phi \). The \emph{restricted coroot} of \(\ol{\alpha}\) is defined as 
\begin{itemize}
    \item \( (\ol{\alpha})^{\vee} = \frac{1}{2} \alpha^{\vee}\) if \(\sigma(\alpha) = - \alpha \) 
    \item \( (\ol{\alpha})^{\vee} = \frac{1}{2} (\alpha^{\vee} - \sigma(\alpha)^{\vee}), \) if  \( \sprod{\alpha^{\vee}, \sigma(\alpha)} = 0 \). 
    \item \( (\ol{\alpha})^{\vee} = \alpha^{\vee} - \sigma(\alpha)^{\vee}, \) if \( \sprod{\alpha^{\vee}, \sigma(\alpha)} = 1\). 
\end{itemize}
\end{defn}

\begin{prop} \cite[2.4, Proposition 1]{Vus90} \label{proposition_image_colors_symmetric_spaces}
The images of the colors in \( \Ncal(T_s/ T_s \cap H) \) are in bijection with the restricted coroots of  \( G/H \). More precisely, let \( \Dcal \) be the colors of \( G/H \). Then 
\[ \rho( \Dcal) = \set{ (\ol{\alpha})^{\vee}, \ol{\alpha} \in R } \] 
If \( G/H \) is not hermitian (i.e. the center of \(G^{\sigma}\) is discrete), then \( \rho \) is injective on \( \Dcal \); if not, then each fiber of \( \rho \) contains at most two points. 
\end{prop}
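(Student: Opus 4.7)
The plan is to identify the colors of a semisimple symmetric space $G/H$ by relating them to the orbits of $\mathrm{SL}_2$-subgroups attached to the restricted roots. Since $G/H$ is spherical, each color is a $B$-stable prime divisor lying in the complement of the open $B$-orbit $BH/H$, so the first step is to parametrize these divisors explicitly via the restricted root system.

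First, I would fix a $\sigma$-stable maximal torus $T \supset T_s$ and a Borel subgroup $B \supset T$ compatible with $\sigma$ (so that $BH/H$ is open in $G/H$). The boundary components of $BH/H$ are $B$-stable divisors, hence candidates for colors. For each restricted root $\ol{\alpha} \in R$, pick a representative $\alpha \in \Phi$ with $\ol{\alpha} = \alpha - \sigma(\alpha)$, form the associated $\mathrm{SL}_2$-subgroup $G_\alpha$, and study the $G_\alpha$-orbit through the base point $eH \in G/H$. The closure of this orbit is a rational curve which intersects a unique boundary divisor $D_{\ol{\alpha}}$, producing a candidate map $\ol{\alpha} \mapsto D_{\ol{\alpha}}$ from restricted roots to colors.

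Next, to compute $\rho(D_{\ol{\alpha}}) \in \Ncal_{\Qbb}$, I would evaluate the valuation $\nu_{D_{\ol{\alpha}}}$ on a $B$-eigenvector $f_\chi \in \Cbb(G/H)^{(B)}$ of weight $\chi$. Restricting $f_\chi$ to the $G_\alpha$-orbit reduces the computation to a rank-one or rank-two calculation. The three cases in the definition of the restricted coroot appear naturally:
\begin{itemize}
\item when $\sigma(\alpha) = -\alpha$, the subgroup $G_\alpha$ is $\sigma$-stable and the situation is a rank-one split symmetric space, contributing $(\ol{\alpha})^\vee = \tfrac{1}{2}\alpha^\vee$;
\item when $\sprod{\alpha^\vee, \sigma(\alpha)} = 0$, the subgroups $G_\alpha$ and $G_{\sigma(\alpha)}$ commute and their joint action gives $(\ol{\alpha})^\vee = \tfrac{1}{2}(\alpha^\vee - \sigma(\alpha)^\vee)$;
\item when $\sprod{\alpha^\vee, \sigma(\alpha)} = 1$, the analysis takes place inside the rank-two subsystem spanned by $\alpha$ and $\sigma(\alpha)$, yielding $(\ol{\alpha})^\vee = \alpha^\vee - \sigma(\alpha)^\vee$.
\end{itemize}

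Surjectivity of the map $\set{(\ol{\alpha})^\vee : \ol{\alpha} \in R} \subset \rho(\Dcal)$ follows from the observation that the complement of the open $B$-orbit is covered by the closures of the various $G_\alpha$-orbits as $\alpha$ varies; a dimension count then forces equality. For the injectivity statement, the main tool is Prop.~\ref{automorphism_group_dimension}: two different colors have the same image under $\rho$ exactly when some element of $N_G(H)/H$ permutes them. I would trace this obstruction back to the center of $G^\sigma$, using the decomposition of $G/H$ into simple factors. For a non-hermitian symmetric space, this center is discrete and acts trivially on the set of colors, giving injectivity. The \emph{hermitian case} is the main obstacle: here the center of $G^\sigma$ contains a positive-dimensional torus whose action can identify pairs of colors. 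A case-by-case analysis on the irreducible hermitian symmetric factors, together with the explicit classification of their restricted root systems, shows that the resulting identification is at most a $\Zbb/2\Zbb$-action, bounding the fibers of $\rho|_{\Dcal}$ by two.
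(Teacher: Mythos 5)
The paper offers no proof of this proposition --- it is quoted from [Vus90, 2.4, Proposition 1] --- so there is no internal argument to compare against. Your outline does follow the broad strategy of the original proof (rank-one localization at roots of \(G\), with the three \(\sigma\)-types of a root producing the three formulas for the restricted coroot), but several steps contain genuine gaps.

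First, the geometric mechanism is misstated. The orbit \(G_{\alpha}\cdot eH \cong G_{\alpha}/(G_{\alpha}\cap H)\) is a single point when \(\alpha\) is a root of \(G^{\sigma}\), and a two-dimensional rank-one symmetric variety (\(SL_2/T\) or \(SL_2/N(T)\)) when \(\sigma(\alpha)=-\alpha\); it is never ``a rational curve meeting a unique boundary divisor'', and in the \(SL_2/T\) case the root \(\alpha\) accounts for \emph{two} colors, not one. The correct device is the action of the minimal parabolics \(P_{\alpha}\supset B\) on the \(B\)-orbits: a color is a \(B\)-stable divisor moved by some \(P_{\alpha}\), and \(\rho\) is computed in the rank-one quotient of \(P_{\alpha}\cdot x_0\). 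For the same reason your surjectivity argument fails: closures of \(G_{\alpha}\)-orbits have dimension at most two and cannot cover the divisorial complement of the open \(B\)-orbit; what is actually used is that every non-open \(B\)-orbit is strictly enlarged by some minimal parabolic. Note also that for an affine symmetric space \(\rho(\Dcal)\) generates a strictly convex cone not containing \(0\) (Prop.~\ref{affine_embedding_criterion}), so it cannot equal the negation-stable set \(\set{(\ol{\alpha})^{\vee},\ \ol{\alpha}\in R}\) taken over \emph{all} of \(R\); the correct index set is the simple (or positive) restricted roots, as the paper's own figures confirm, and a surjectivity proof targeting all of \(R\) would be proving a false statement.

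Second, for the injectivity dichotomy, the reduction ``\(\rho(D_1)=\rho(D_2)\) if and only if some element of \(N_G(H)/H\) permutes \(D_1,D_2\)'' is asserted but not proved, and it does not follow from Prop.~\ref{automorphism_group_dimension}, which only computes \(\dim N_G(H)/H\); in general the two colors moved by a type-(a) simple root satisfy \(\rho(D^{+})+\rho(D^{-})=\alpha^{\vee}|_{\Mcal}\) without being equal, so equality of images is exactly what must be established for symmetric spaces. The real content of the hermitian dichotomy is the rank-one computation showing that a fiber of cardinality two occurs precisely for a simple root \(\alpha\) with \(\sigma(\alpha)=-\alpha\) whose reduction is \(SL_2/T\) rather than \(SL_2/N(T)\), combined with the fact that such a root exists if and only if the centre of \(G^{\sigma}\) is positive-dimensional; deferring all of this to an unspecified case-by-case check of the irreducible hermitian factors leaves the heart of the claim unproved.
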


We denote by 
\[ \varpi_R = \sum_{\ol{\alpha} \in R^{+}} m_{\ol{\alpha}} \ol{\alpha} \] 
the sum of positive restricted roots (counted with multiplicities). The Killing form \( \kappa \) of \( \tfrak_s \) allows us to identify \( \Ncal_{\Rbb} \) with \( \Mcal_{\Rbb} \).

Now consider an irreducible semisimple symmetric space \( G/H \) of rank \( 2 \) with valuation cone \( \Vcal \). For all \( \xi  \in \text{Int} \Vcal_{\Qbb} \), the colored cones generated by \( \xi \) and the images of the colors cover the whole valuation cone \( \Vcal \), hence define a compactification \( X^{\xi} \) (cf. Thm. \ref{spherical_embeddings_classification}). The unique \( G \)-stable divisor \( D_0^{\xi} \) of \( X^{\xi} \) corresponds to the ray generated by \( \xi \). 

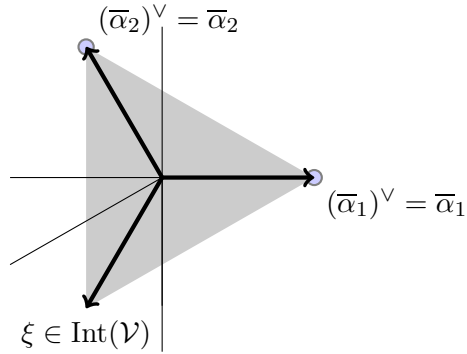
\begin{figure}[H]
\begin{tikzpicture}
\pgfmathsetmacro\ax{2}
\pgfmathsetmacro\ay{0}
\pgfmathsetmacro\bx{2 * cos(120)}
\pgfmathsetmacro\by{2 * sin(120)}
\pgfmathsetmacro\lax{2*\ax/3 + \bx/3}
\pgfmathsetmacro\lay{2*\ay/3 + \by/3}
\pgfmathsetmacro\lbx{\ax/3 + 2*\bx/3}
\pgfmathsetmacro\lby{\ay/3 + 2*\by/3}

\tikzstyle{couleur_pl}=[circle,draw=black!50,fill=blue!20,thick, inner sep = 0pt, minimum size = 2mm]

\fill [black!20] (0,0)--(\ax,\ay)--(\bx,\by) -- cycle;
\node at (\ax, \ay) [couleur_pl] {};
\node at (\bx, \by) [couleur_pl] {};
\fill [black!20] (0,0) --(-\lax - \lbx, -\lay - \lby)--(\ax, \ay) -- cycle;
\fill [black!20] (0,0) --(-\lax - \lbx, -\lay - \lby)--(\bx, \by) -- cycle;
\draw[->, ultra thick] (0,0) -- (\ax,\ay) node[below right] {\( (\ol{\alpha}_1)^{\vee} = \ol{\alpha}_1 \)};
\draw[->, ultra thick] (0,0) -- (\bx, \by) node[above right] {\( (\ol{\alpha}_2)^{\vee} = \ol{\alpha}_2 \)};
\draw[thin] (-2,0)--(2,0);
\draw[thin] (0,-1.7)--(0,2);

\draw (0,0) -- (-2*\lbx, -2*\lby);
\draw (0,0) -- (-2*\lax, -2*\lay);
\draw[->, ultra thick] (0,0) -- (-\lax - \lbx, -\lay - \lby) node[below] {\( \xi \in \text{Int}(\Vcal) \)};


\end{tikzpicture}
\caption{Compactification of \( A_2 \)-symmetric spaces with a unique \( G\)-stable divisor.}
\end{figure}

The open \( G \)-orbit of \( D_0^{\xi} \) is then \( G\)-isomorphic to a horospherical space \( G /H_0 \). This can be seen from the combinatorial data \( (\Mcal_0, \Ncal_0, \Vcal_0, \Dcal_0) \) of \( G / H_0 \), which in turn can be read from \( G/H \) thanks to the work of Brion-Pauer  \cite{BP87}. 

Let us briefly describle the combinatorial data of \( G/H_0 \). Let \( \Mcal_0 \) be the weight lattice of \( G / H_0 \). Then 
\[ \Mcal_0 = (\Mcal \oplus \Zbb )  \cap (\Nbb \xi)^{\perp} \] 
Let \( \Ncal_0 \) be the lattice of coweights. Define:
\[ \pi_0 : \Ncal \oplus \Zbb \to \Ncal_0 \] 
as the natural dual application to the inclusion \( \Mcal_0 \subset \Mcal \). We have \( \Vcal_0 = \pi_0(\Vcal ) = (\Ncal_0)_{\Qbb} \) (cf. \cite[Théorème 3.6]{BP87}), so \( G/H_0 \) is horospherical. By \textit{loc. cit.}, the (left)-stabilizer \( Q_0 \) of the open Borel-orbit of \( G/H_0 \) is exactly the stabilizer of the open Borel-orbit of \( G/H \). Let \( P_0 \) be the opposite parabolic subgroup to \( Q_0 \). Let \( I_0 \) be the positive simple roots in \( G \) associated to \( P_0 \). We then have by Prop. \ref{horospherical_colors_description_prop}:
\[ \rho_0(\Dcal_0) = \set{ \alpha^{\vee}|_{\Mcal_0}, \quad \alpha \in S \backslash I_0} \] 
where \( S \) is the set of positive simple roots of \( G \), and one can show that \( \rho_0(\Dcal_0) \) actually consists of multiples of \( \pi_0((\ol{\alpha}_1)^{\vee}) \) and \( \pi_0((\ol{\alpha}_2)^{\vee}) \) (cf. \cite[2.3, Remarques 2)]{Vus90}). 

Consider now the horospherical space \( G/H_0 \times \Cbb^{*} \). It is immediate that \( \Mcal(G/H_0 \times \Cbb^{*}) = \Mcal_0 \oplus \Zbb \). Its dual lattice  \( \Ncal(G/H_0 \times \Cbb^{*}) \) is then \( \Ncal_0 \oplus \Zbb \). In particular \( \Ncal(G/H_0 \times \Cbb^{*})_{\Rbb} \simeq \Ncal_{\Rbb} \). The colors of \( G/H_0 \times \Cbb^{*} \) are exactly \( \Dcal_0 \times \Cbb^{*} := \set{D \times \Cbb^{*}, D \in \Dcal_0} \). 

Let \( \Ccal_0 \) be the colored cone generated by the images of \( \Dcal_0 \times \Cbb^{*} \); then \( \Ccal_0 \) coincides with the cone generated by restricted coroots. It follows that \( \Ccal_0 \) does not depend on the choice of \( \xi \).  Denote by \( \Ccal_0^{\vee} \) the dual cone, generated by primitive directions orthogonal to \( \rho_0(\Dcal_0 \times \Cbb^{*})  \); then \( \Ccal^{\vee}_0 \) is exactly the negative Weyl chamber \( \Vcal \). 

\begin{figure}[H]
\begin{tikzpicture}
\pgfmathsetmacro\ax{2}
\pgfmathsetmacro\ay{0}
\pgfmathsetmacro\bx{2 * cos(120)}
\pgfmathsetmacro\by{2 * sin(120)}
\pgfmathsetmacro\lax{2*\ax/3 + \bx/3}
\pgfmathsetmacro\lay{2*\ay/3 + \by/3}
\pgfmathsetmacro\lbx{\ax/3 + 2*\bx/3}
\pgfmathsetmacro\lby{\ay/3 + 2*\by/3}

\tikzstyle{couleur_pl}=[circle,draw=black!50,fill=blue!20,thick, inner sep = 0pt, minimum size = 2mm]

\fill [black!20] (0,0)--(\ax,\ay)--(\bx,\by) -- cycle;
\draw[->, ultra thick] (0,0) -- (\ax,\ay) node[below right] {\( (\ol{\alpha}_1)^{\vee} = \ol{\alpha}_1 \)};
\draw[->, ultra thick] (0,0) -- (\bx, \by) node[above right] {\( (\ol{\alpha}_2)^{\vee} = \ol{\alpha}_2 \)};

\draw (-2,0)--(2,0);
\draw (0,-2)--(0,2);
\node at ( 1.7*\lax - 1.7*\lbx, 1.7*\lay - 1.7*\lby) [couleur_pl] {};
\draw[->] (0,0) -- ( 1.7*\lax - 1.7*\lbx, 1.7*\lay - 1.7*\lby) node[below right] { \( \pi_0((\ol{\alpha}_1)^{\vee}) \) };
\draw (2*\lax - 2*\lbx, 2*\lay - 2*\lby)--(2*\lbx - 2*\lax, 2*\lby - 2*\lay);

\draw (0,0) -- (-2*\lbx, -2*\lby);
\draw (0,0) -- (-2*\lax, -2*\lay);
\fill [blue!20] (0,0) -- (-2*\lax, -2*\lay) -- (-2*\lbx, -2*\lby) -- cycle;
\node[] at (-0.5,-1) {\( \Ccal_0^{\vee}  \) }; 
\node[] at (0.5,0.5) {\( -\Ccal_0\)};

\end{tikzpicture}

\caption{Horospherical cone constructed from a \( A_2 \)-symmetric space. Note that \( \rho_0(\Dcal_0) \) consists exactly of the images of the restricted coroots by \( \pi_0 \).}
\end{figure}
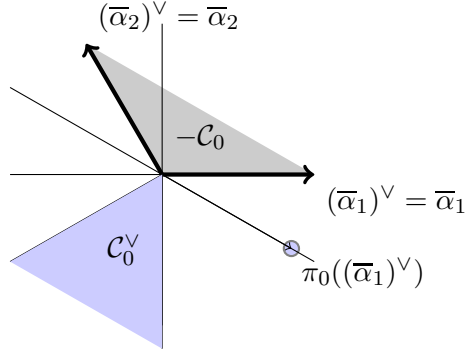

\begin{lem}
The embedding \( G/H_0 \times \Cbb^{*} \hookrightarrow Y_0 \) associated to \( (\Ccal_0, \Dcal_0 \times \Cbb^{*} ) \) is a horospherical cone.
\end{lem}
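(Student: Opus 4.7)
The plan is to verify directly the combinatorial criterion of Theorem \ref{spherical_cone_criterion} applied to the spherical embedding of $G/H_0 \times \Cbb^{*}$ associated to $(\Ccal_0, \Dcal_0 \times \Cbb^{*})$. Three things must be checked: (a) the valuation cone of $G/H_0 \times \Cbb^{*}$ contains a line, (b) $\Ccal_0$ is of maximal dimension in $\Ncal(G/H_0 \times \Cbb^{*})_{\Rbb}$, and (c) the set of colors of the embedding equals the full set of colors of $G/H_0 \times \Cbb^{*}$.

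Condition (a) is immediate: since $G/H_0$ is horospherical, so is $G/H_0 \times \Cbb^{*}$, and by Prop. \ref{horospherical_space_characterization_proposition} its valuation cone is the entire ambient vector space $\Ncal(G/H_0 \times \Cbb^{*})_{\Rbb}$. Condition (c) also holds essentially by construction: the torus factor $\Cbb^{*}$ contributes no new $B$-stable divisor, so the colors of $G/H_0 \times \Cbb^{*}$ are exactly $\Dcal_0 \times \Cbb^{*}$, which is the coloration of the embedding. For (b), I would use the description of $\Ccal_0$ as the cone generated by $\rho_0(\Dcal_0 \times \Cbb^{*})$, which consists of positive multiples of $\pi_0((\ol{\alpha}_1)^{\vee})$ and $\pi_0((\ol{\alpha}_2)^{\vee})$. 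The paper has already observed that the dual cone $\Ccal_0^{\vee}$ coincides with the negative restricted Weyl chamber $\Vcal$ of the irreducible rank-two symmetric space $G/H$; since $\Vcal$ is of maximal dimension in $\Mcal_{\Rbb}$, its dual $\Ccal_0$ is of maximal dimension in $\Ncal_{\Rbb}$ and is automatically strictly convex.

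Before invoking Theorem \ref{spherical_cone_criterion}, a preliminary verification is needed: that $(\Ccal_0, \Dcal_0 \times \Cbb^{*})$ is a valid colored cone in the sense of Defn. \ref{definition_colored_cones}. This amounts to $0 \notin \rho_0(\Dcal_0 \times \Cbb^{*})$ (which follows from Prop. \ref{horospherical_colors_description_prop} applied to the quasi-affine horospherical space $G/H_0 \times \Cbb^{*}$, since all simple coroots survive in $\rho_0$), the polyhedrality and strict convexity of $\Ccal_0$ (from the dual-cone observation above, together with polyhedrality of the Weyl chamber), and the relative interior of $\Ccal_0$ meeting $\Vcal$ (automatic since $\Vcal$ is the full $\Ncal_{\Rbb}$). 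Once this is in place, Theorem \ref{spherical_cone_criterion} applies and shows that the resulting simple embedding $Y_0$ is a spherical cone; since the open orbit is horospherical, $Y_0$ is a horospherical cone.

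I do not expect any genuine obstacle here. The most delicate point is purely bookkeeping: keeping track of the chain of lattice identifications $\Mcal_0 = (\Mcal \oplus \Zbb) \cap (\Nbb\xi)^{\perp}$, $\Mcal(G/H_0 \times \Cbb^{*}) = \Mcal_0 \oplus \Zbb$, and the real identification $\Ncal(G/H_0 \times \Cbb^{*})_{\Rbb} \simeq \Ncal_{\Rbb}$, so that one may legitimately read off $\Ccal_0 \subset \Ncal_{\Rbb}$ and its dual $\Ccal_0^{\vee} = \Vcal$ as stated. With these identifications fixed, the verification reduces to the elementary cone-theoretic statements summarized above.
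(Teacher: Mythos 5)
Your proof is correct and follows essentially the same route as the paper: the paper's own argument is the one-line observation that \( \Ccal_0 \) is a polyhedral cone of maximal dimension containing all the colors, hence defines a horospherical cone by Theorem \ref{spherical_cone_criterion}. You merely spell out the three conditions of that criterion (valuation cone equal to all of \( \Ncal_{\Rbb} \) by horosphericality, maximal dimension of \( \Ccal_0 \) via the duality with the negative restricted Weyl chamber, and \( \Dcal_{Y_0} = \Dcal_0 \times \Cbb^{*} \) by construction) in more detail than the paper does.
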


\begin{proof}
The cone  \( \Ccal_0 \)  is a polyhedral cone of maximal dimension containing all the colors, hence defines a horospherical cone by Thm. \ref{spherical_cone_criterion}.
\end{proof}

Now consider the real vector space \( \tfrak_{nc,s}^{*} \) with basis \( (\delta, \tau ) \) where \( \tau \) is in the interior of the positive Weyl chamber, such that: 
\[ \sprod{\tau ,\xi} = -1, \quad \kappa(\tau, \delta ) = 0, \quad \kappa(\ol{\alpha}_1, \delta) < 0 \] 
We are interested in finding a vector \( \tau \) that defines the conical Calabi-Yau metric on \( Y_0 \). The vector \( \tau \) might be interpreted as the opposite Reeb vector \( -\xi \) as we explained in the previous section.

For practical reasons, we parametrize the orthogonal direction \( \delta \) by:
\[ \delta = -x \wt{\alpha}_1 + \wt{\alpha}_2, \; x \in \Rbb \] 
Here \( \wt{\alpha}_1, \wt{\alpha}_2 \) denote the directions of the Weyl wall orthogonal to \( \ol{\alpha}_2, \ol{\alpha}_1 \) defined as
\[ \wt{\alpha}_1 = \ol{\alpha}_1 - \frac{\kappa(\ol{\alpha}_1,\ol{\alpha}_2)}{\kappa(\ol{\alpha}_2,\ol{\alpha}_2)} \ol{\alpha}_2, \quad \wt{\alpha}_2 = \ol{\alpha}_2 - \frac{\kappa(\ol{\alpha}_1, \ol{\alpha}_2)}{\kappa(\ol{\alpha}_1,\ol{\alpha}_1)} \ol{\alpha}_1 \] 
It is then easy to see that \( \tau \) belongs to the interior of the cone \( \Ccal_0 \) iff \( x > 0 \). 

Let
\[ P(p) = \prod_{\ol{\alpha} \in R^{+}} \kappa(\ol{\alpha}, p \delta  + \varpi_R)^{m_{\ol{\alpha}}}, \; p \in \tfrak_{nc,s}^{*}  \]
Consider the polytope \( \Delta_{\tau}  = [ \lambda_{-}(x), \lambda_{+}(x) ] \) where \( \lambda_{-} \) and \( \lambda_{+} \) are the intersections with the Weyl walls of the line containing \( \varpi_R \) and orthogonal to \( \tau \). In more explicit terms: 
\[ \lambda_{+}(x) = \frac{-\kappa(\ol{\alpha}_1, \varpi_R)}{ \kappa(\ol{\alpha}_1, \delta)} , \quad \lambda_{-}(x) = \frac{-\kappa(\ol{\alpha}_2, \varpi_R)}{\kappa(\ol{\alpha}_2, \delta)} \] 
By our main theorem, the polarized cone \( (Y_0,\xi) \) admits a conical Calabi-Yau metric if and only if: 
\[ \bar_P (\Delta_{\tau})= \varpi_R \] 
where \( \bar_P \) is the barycenter with respect to the measure \( P(p)dp \). Equivalently, we have: 

\begin{prop}
There exists a conical Calabi-Yau metric on \( Y_0 \) if and only if the polynomial equation equivalent to
\begin{equation} \label{polynomial_CY}
\int_{\lambda_{-}(x)}^{\lambda_{+}(x)} p P(p) dp = 0 
\end{equation} 
admits a (necessarily unique) positive root. In particular, \( Y_0 \) is a quasi-regular Calabi-Yau cone if and only if the root is rational. 
\end{prop}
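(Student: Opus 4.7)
The plan is to directly unfold the barycenter identity \( \bar_P(\Delta_\tau) = \varpi_R \) --- which, by Theorem~\ref{main_theorem_volume_minimization} applied to \( Y_0 \), is equivalent to existence of a conical Calabi-Yau metric on \( (Y_0, \xi) \) --- into the scalar integral condition (\ref{polynomial_CY}). Since the rank of \( Y_0 \) equals $2$, the polytope \( \Delta_\tau \) is one-dimensional, and the vector-valued barycenter identity collapses to a single scalar condition.

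First, I parametrize \( \Delta_\tau \) as the segment \( \set{\varpi_R + p\,\delta : p \in [\lambda_{-}(x), \lambda_{+}(x)]} \subset \tfrak_{nc,s}^* \), where the endpoints \( \lambda_{\pm}(x) \) are obtained by solving \( \kappa(\ol{\alpha}_i, \varpi_R + p\,\delta) = 0 \) for \( i = 1, 2 \), reproducing the formulas stated above the proposition. Plugging this parametrization into the definition of the \( P \)-weighted barycenter yields
\[
\bar_P(\Delta_\tau) = \varpi_R + \left(\frac{\int_{\lambda_{-}(x)}^{\lambda_{+}(x)} p\,P(p)\,dp}{\int_{\lambda_{-}(x)}^{\lambda_{+}(x)} P(p)\,dp}\right)\delta.
\]
Since \( \delta \neq 0 \) and the denominator is strictly positive (\( P > 0 \) on the relative interior of \( \Delta_\tau \)), the equality \( \bar_P(\Delta_\tau) = \varpi_R \) is equivalent to the vanishing of the numerator, which is exactly (\ref{polynomial_CY}).

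For the parenthetical uniqueness assertion, I invoke the strict convexity and boundary blow-up of the Duistermaat-Heckman volume \( \vol_P \) on \( \mathrm{Int}(\Ccal_0) \), established in the lemma preceding Proposition~\ref{prop_volume_minimization_equivalent_existence}. Together these force a unique critical point in the interior, which by that same proposition coincides with the unique \( \tau \) satisfying the barycenter condition, hence with the unique positive value of \( x \) solving (\ref{polynomial_CY}). The final clause on quasi-regularity is immediate from the definitions: quasi-regularity of the Reeb vector amounts to rationality in the cocharacter lattice, which under the parametrization \( \delta = -x\wt{\alpha}_1 + \wt{\alpha}_2 \) is equivalent to \( x \in \Qbb \). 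The only mildly non-trivial verification is that the abstract polytope \( \Delta_\tau \) from Proposition~\ref{prop_volume_minimization_equivalent_existence} genuinely coincides with the geometric segment cut out by the Weyl walls, which follows from the identification \( \Ccal_0^\vee = \Vcal \) together with \( \varpi_R \) lying in the interior of the dominant chamber. Overall, the proposition is essentially an algebraic translation of the main theorem in the rank-two setting, and no new analytic input is required.
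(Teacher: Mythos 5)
Your proof is correct and takes essentially the same route as the paper, which presents this proposition as an immediate reformulation of the condition \( \bar_P(\Delta_{\tau}) = \varpi_R \) supplied by the main theorem. Your explicit unfolding of the one-dimensional barycenter along the parametrization \( p \mapsto \varpi_R + p\,\delta \), the uniqueness of the positive root via strict convexity and boundary blow-up of the weighted volume, and the identification of quasi-regularity with \( x \in \Qbb \) are exactly the intended content.
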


\begin{ex} Consider the symmetric spaces \( G/H \) of rank \( 2 \) whose root system is of type \( A_2 \). The multiplicities of the two simple restricted roots \( \ol{\alpha}_1, \ol{\alpha}_2 \) all equal to \( m \), hence 
\[ \varpi_R = 2 m( \ol{\alpha}_1 + \ol{\alpha}_2 ) \]  
We also have:
\[ \kappa(\ol{\alpha}_1, \ol{\alpha}_1) = \kappa(\ol{\alpha}_2, \ol{\alpha}_2) = 1, \quad \kappa(\ol{\alpha}_1, \ol{\alpha}_2) = -1/2, \quad \kappa(\ol{\alpha}_1, \wt{\alpha}_1) = \kappa(\ol{\alpha}_2, \wt{\alpha}_2) = 3/4 \] 
so \( \kappa(\ol{\alpha}_1, \varpi_R) = \kappa(\ol{\alpha}_2, \varpi_R) = m \) and \( \kappa(\ol{\alpha}_1 + \ol{\alpha}_2, \varpi_R) = 2m \). It follows that the polynomial equation (\ref{polynomial_CY}) remains unchanged under the symmetry of \( \tfrak_{nc,s}^{*} \) exchanging two roots, hence the (opposite) Reeb vector is fixed (by uniqueness of the volume minimization condition) and \( \delta \) is sent to its opposite under this symmetry. It follows that
\[ \delta = -x \wt{\alpha}_1 + \wt{\alpha}_2 =  x \wt{\alpha}_2 - \wt{\alpha}_1 \] 
hence \( x = 1 \) and \( \tau = \ol{\alpha}_1 + \ol{\alpha}_2 \). As a consequence, the horospherical cones constructed from \[ SL_3/ SO_3, \; SL_3 \times SL_3, \;  SL_6/ Sp_6, \; E_6/ F_4 \]
(which are symmetric spaces of type \( A_2 \) with multiplicities \( m = 1,2,4,8 \) respectively) all have quasi-regular Calabi-Yau structures with a rational Reeb vector.  
\end{ex}

\subsection{Irregular Calabi-Yau cones}

We consider in this part rank-two symmetric spaces  with root system of type $BC_2$ of multiplicities \( (m_1,m_2,m_3) \). Let \( (\ol{\alpha}_1,\ol{\alpha}_2) \) be the simple restricted roots with multiplicities \( (m_1, m_2) \). The positive roots of multiplicity \( m_1 \) are \( (\ol{\alpha}_1, \ol{\alpha}_1 + 2 \ol{\alpha}_2) \), the ones with multiplicity \( m_2 \) are \( (\ol{\alpha}_2, \ol{\alpha}_1 + \ol{\alpha}_2) \), and those with multiplicity \( m_3 \) are \( (2 \ol{\alpha}_2, 2 \ol{\alpha}_1 + 2 \ol{\alpha}_2) \). 

We also have 
\begin{align*}
\kappa(\ol{\alpha}_1, \ol{\alpha}_1) &= 2, \; \kappa(\ol{\alpha}_1, \ol{\alpha}_2) = -1, \; \kappa(\ol{\alpha}_2, \ol{\alpha}_2) = 1, \\ &\kappa(\ol{\alpha}_1, \wt{\alpha}_1) = 1, \; \kappa(\ol{\alpha}_2, \wt{\alpha}_2) = 1/2 
\end{align*}
Moreover, 
\begin{align*} 
\varpi_R = (2m_1 + 2m_2 + 4 m_3) \ol{\alpha}_2 + (2m_1 + m_2 + 2m_3) \ol{\alpha}_1 
\end{align*} 
A direct calculation gives us (up to multiplying by a constant) :
\begin{align*}
P(p) = &( 2m_1 - p x)^{m_1} ( 2m_1 + 2m_2 + 4m_3 + p(1-x))^{m_1} \\
&(2m_2 + 4 m_3 + p)^{m_2 + m_3} (4m_1 + 2m_2 + 4m_3 + (1-2x)p )^{m_2 + m_3} 	
\end{align*} 
The vertices of the polytope \( \Delta_{\tau} \) have coordinates
\[ \lambda_{+}(x) = \frac{-\kappa(\ol{\alpha}_1, \varpi_R)}{ \kappa(\ol{\alpha}_1, \delta)} = \frac{2m_1}{x}, \quad \lambda_{-}(x) = \frac{-\kappa(\ol{\alpha}_2, \varpi_R)}{\kappa(\ol{\alpha}_2, \delta)} = - (2m_2 + 4 m_3) \]

\begin{ex}
Let \( \Gcal_k := SL_k / SL_2 \times SL_{k-2} \) be the complex grassmannian of multiplicities \( (m_1, m_2, m_3) = (2, 2k - 8,1), k \geq 5 \), which we can view as the tangent or cotangent bundle over a real grassmannian, endowed with an appropriate complex structure. If \( SL_k \) acts on \( \Cbb^k \) on the left, then geometrically, \( \Gcal_k \) is \( SL_k \)-isomorphic to a pair \( (V,W) \) of vector subspaces of \( \Cbb^{k} \) with dimensions \( (2, k-2) \) such that \( \Cbb^k = V \oplus W \). 
 
Consider the complex grassmannian \( \Gcal_{k=5} \), with multiplicities
\( (m_1,m_2,m_3) = (2,2,1) \). We have  :
\[ \varpi_R = 8 \ol{\alpha}_1 + 12 \ol{\alpha}_2, \quad \lambda_{-} = -6,\; \lambda_{+} = \frac{4}{x} \] 
The polynomial \( P \) then becomes
\[ P(p) = ( 4 - p x)^2 ( 12 + p(1-x) )^2 (16 + p(1-2x))^3 (8 - p)^3 \] 
and the barycenter condition reduces to
\[ (1 + 2 x)^{11} (-42 - 84 x - 63 x^2 + 60 x^3 + 240 x^4 + 336 x^5 + 
   224 x^6)  = 0 \] 
This equation has a unique irrational positive root, as follows from the Eisenstein's criterion for the prime number \( 3 \). In particular, there exists no compactification with a unique \( SL_5 \)-stable Kähler-Einstein divisor, but the horospherical cone \( Y_0 \) still has an irregular conical Calabi-Yau structure. 
\end{ex}

We believe that the following holds.

\begin{conjecture}
For each \( k \geq 5 \), the horospherical cone constructed from \( \Gcal_k \) has an irregular conical Calabi-Yau sructure. 
\end{conjecture}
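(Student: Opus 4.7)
The plan is to combine Theorem \ref{main_theorem_volume_minimization} with an explicit arithmetic analysis of the polynomial equation (\ref{polynomial_CY}). By that theorem, the conjecture is equivalent to showing that for each $k \geq 5$, the unique positive real root of (\ref{polynomial_CY}) with multiplicities $(m_1,m_2,m_3) = (2, 2k-8, 1)$ is irrational; the Reeb vector $\xi$ then inherits irrationality from the linear identification $\tau = -x\wt{\alpha}_1 + \wt{\alpha}_2$ used in the previous subsection.

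The first step is to compute $Q_k(x) := x^{4k-8} \int_{-(4k-12)}^{4/x} p\, P(p)\, dp \in \Zbb[x]$ explicitly. Substituting the multiplicities into $P(p)$, the four factors expand into a polynomial in $p$ whose coefficients are explicit integer polynomials in $k$ and $x$; termwise integration and clearing the $x^{4k-8}$ denominator yields $Q_k$ as a sum of products of binomial coefficients $\binom{2k-7}{j}$ with polynomials in $k$ and $x$. The second step is to identify and factor out the trivial factor $(1+2x)^{N_k}$, which in the $k=5$ case appears with exponent $11$. The structural reason is that at $x=-1/2$ the upper endpoint $4/x = -8$ coincides with a zero of $(4-px)^2$, so the integral vanishes at $x=-1/2$ to a predictable order $N_k$ that is an affine function of $k$; one can extract this exponent by a Taylor expansion around $x=-1/2$. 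Dividing yields $Q_k(x) = (1+2x)^{N_k} R_k(x)$ with $R_k \in \Zbb[x]$ of degree linear in $k$, and the positive root of interest is a positive root of $R_k$.

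The hard part is to show that $R_k$ has no positive rational root. The model is the $k=5$ case, where Eisenstein's criterion at the prime $3$ proves $R_5$ is irreducible. For general $k$ I do not expect a single prime to work uniformly; instead, I would extract closed formulae for the coefficients of $R_k$ in terms of binomial coefficients in $k$, compute their $p$-adic valuations via Kummer's theorem or Legendre's formula, and then seek, for each residue class of $k$ modulo a suitable product of small primes, a prime $p_k$ to which Eisenstein or the Eisenstein-Dumas criterion (via the Newton polygon of $R_k$) applies. A complementary route is to bypass irreducibility altogether: by the rational root theorem, any rational root must have numerator dividing $R_k(0)$ and denominator dividing the leading coefficient, and one can hope to rule out every such candidate by a sign or $p$-adic valuation argument, possibly after reduction modulo a small prime.

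I expect the genuine difficulty to lie precisely in producing a uniform-in-$k$ arithmetic argument for step three: the coefficients of $R_k$ are specific combinations of binomials in $k$, and while the convexity part of Theorem \ref{main_theorem_volume_minimization} guarantees a unique positive root, its irrationality seems to depend on arithmetic that varies with $k$. Finitely many exceptional $k$ would presumably be treated by direct computation; the remaining cases would be handled by the residue-class analysis sketched above. The fact that the author states this as a conjecture rather than a theorem is itself evidence that no obvious uniform mechanism presents itself, so one of the challenges of the plan is to discover such a mechanism, or alternatively to prove a strong enough asymptotic statement (e.g., on the Newton polygon of $R_k$ as $k \to \infty$) to reduce the problem to finitely many cases.
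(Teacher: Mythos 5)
There is no proof of this statement in the paper to compare against: it is stated explicitly as a conjecture, and the only case actually established is $k=5$, done in the preceding example by computing the barycenter polynomial, stripping off the factor $(1+2x)^{11}$, and applying Eisenstein's criterion at the prime $3$ to the remaining degree-six factor. Your proposal correctly reconstructs this framework --- the reduction via Theorem \ref{main_theorem_volume_minimization} and equation (\ref{polynomial_CY}) to the irrationality of the unique positive root, the identification of the structural factor vanishing at $x=-1/2$, and the Eisenstein mechanism for $k=5$ --- so as far as the set-up goes you are on exactly the paper's track.

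But the proposal is not a proof, and you say so yourself: the entire mathematical content of the conjecture sits in your ``step three,'' and what you offer there is a search strategy, not an argument. Concretely: (i) you have not verified that the exponent $N_k$ of $(1+2x)$ is an affine function of $k$, nor that the quotient $R_k$ lies in $\Zbb[x]$ with controllable coefficients --- these are plausible but require the explicit termwise integration you only describe; (ii) the plan to find, for each residue class of $k$ modulo some modulus, a prime at which Eisenstein or a Newton-polygon criterion applies has no termination guarantee --- there is no a priori reason the needed primes stabilize over finitely many residue classes, and Eisenstein-type criteria fail generically for polynomials whose coefficients are binomial expressions in a parameter; (iii) the rational-root-theorem fallback faces a candidate set (divisors of $R_k(0)$ and of the leading coefficient) that grows with $k$, so ``rule out every such candidate'' is itself an unbounded family of verifications absent a uniform $p$-adic or sign argument. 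In short, the proposal correctly locates the difficulty but does not overcome it; the conjecture remains open under your plan exactly where it is open in the paper.
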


\begin{ex}
Now consider the following symmetric spaces 
\[SO_5 \times SO_5/ SO_5, \; Sp_8 / Sp_4 \times Sp_4, \; SO_{10}/ GL_5, \; E_6/SO_{10} \times SO_2, \] 
They are respectively of multiplicities
\[(2,2,0), \; (3,4,0), \; (4,4,1), \; (6,8,1), \] 
The polynomial equations resulting from (\ref{polynomial_CY}) are, respectively: 
\begin{align*}
(1 + x)^9 (-7 - 15 x - 14 x^2 + 2 x^3 + 24 x^4 + 28 x^5) = 0 
\end{align*}
\begin{align*}
(3 + 4 x)^{15} (&-99 - 319 x - 545 x^2 - 549 x^3 - 140 x^4 \\
 &+ 616 x^5 + 1296 x^6 + 1360 x^7 + 704 x^8) = 0 
\end{align*}
\begin{align*}
(2 + 3 x)^{19} (&-6006 - 25025 x - 55770 x^2 - 80850 x^3 \\
&- 69300 x^4 + 4914 x^5 + 134820 x^6 \\ 
&+ 264880 x^7 + 314160 x^8 + 240240 x^9 + 
   96096 x^{10}) = 0
\end{align*}
\begin{align*}
(3 + 5 x)^{31} (&-6128925 - 38326211 x - 129851491 x^2 - 309121659 x^3\\
&-563633385 x^4 - 802569405 x^5 - 849852729 x^6 \\
&- 498060849 x^7 + 375429054 x^8 + 1679517840 x^9\\
&+ 3059056000 x^{10} + 4002942944 x^{11} + 4101349824 x^{12} \\
&+ 3312646656 x^{13} + 2032884480 x^{14} + 859541760 x^{15} \\
&+ 191009280 x^{16}) = 0
\end{align*}
One can check directly using e.g. \textit{Mathematica} that the unique positive solution of each polynomial is irrational. In particular, the horospherical cone constructed in each case has an irregular conical Calabi-Yau structure. As a consequence, we obtain examples of irregular horospherical cones. 
\end{ex}

\bibliographystyle{alpha}
\bibliography{biblio}
\end{document}